\theoremstyle{plain}
\newtheorem{thm}{Theorem}[section]
\newtheorem*{thm1.1}{Theorem 1.1}
\newtheorem*{thm1.2}{Theorem 1.2}
\newtheorem{lem}[thm]{Lemma}
\newtheorem*{lem*}{Lemma}
\newtheorem{cor}[thm]{Corollary}
\newtheorem*{cor*}{Corollary}
\newtheorem{prop}[thm]{Proposition}
\newtheorem*{prop*}{Proposition}
\theoremstyle{definition}
\newtheorem{defn}[thm]{Definition}
\newtheorem*{defn*}{Definition} 
\theoremstyle{remark}
\numberwithin{equation}{section}
\newcommand{\Nat}{\ensuremath{\mathbb{N}}}
\newcommand{\Rat}{\ensuremath{\mathbb{Q}}}
\newcommand{\Real}{\ensuremath{\mathbb{R}}}
\newcommand{\Cant}{\ensuremath{2^{\Nat}}}
\newcommand{\Str}[1][<\Nat]{\ensuremath{2^{#1}}}
\newcommand{\Sle}{\ensuremath{\subset}}
\newcommand{\Cyl}[1]{\ensuremath{\llbracket #1 \rrbracket}}
\newcommand{\Ceil}[1]{\ensuremath{\lceil #1 \rceil}}
\newcommand{\Rest}[1]{\ensuremath{ \lceil_{#1}}}
\newcommand{\Op}[1]{\ensuremath{\operatorname{#1}}}
\newcommand{\Conc}{\ensuremath{\mbox{}^\frown}}
\newcommand{\Estr}{\ensuremath{\varnothing}}
\newcommand{\Hmeas}{\ensuremath{\mathcal{H}}}
\newcommand{\Hdim}{\ensuremath{\dim_H}}
\renewcommand{\and}{\ensuremath{\:\&\:}}
\newcommand{\LoK}{\ensuremath{\underline{\kappa}}}
\newcommand{\Len}[1]{\ensuremath{\Op{len}(#1)}}
 \DeclareMathOperator{\K}{K}
\DeclareMathOperator{\KM}{KM}
\title{Information vs Dimension -- an Algorithmic Perspective}
\author{Jan Reimann}
\address{Department of Mathematics  \\
Penn State University, University Park 
}
\email{jan.reimann@psu.edu}
\subjclass[2020]{03D32; 68Q30}
\thanks{Preprint version of an article published as Chapter 4 in \emph{Structure and randomness in computability and set theory, edited by D.\ Cenzer, C.\ Porter, J.\ Zapletal}, World Scientific, 2020.\\ \url{https://doi.org/10.1142/10661}}
\begin{document}

\begin{abstract}
This paper surveys work on the relation between fractal dimensions and
algorithmic information theory over the past thirty years. It covers the basic
development of prefix-free Kolmogorov complexity from an information theoretic
point of view, before introducing Hausdorff measures and dimension along with
some important examples. The main goal of the paper is to motivate and develop
the informal identity ``entropy = complexity = dimension'' from first
principles. The last section of the paper presents some new observations on
multifractal measures from an algorithmic viewpoint. 
\end{abstract}

\maketitle

\section{Introduction and Preliminaries}\label{sec:preliminaries}

Starting with the work by \citet{Ryabko:1984a,Ryabko:1984a} and
\citet{Staiger:1981a,Staiger:1989a,Staiger:1993a} in the 1980s, over the past 30
years researchers have investigated a strong relation between fractal dimension
and algorithmic information theory. At the center of this relation is a
\emph{pointwise} version of Hausdorff dimension (due to
\citet{Lutz:2000a,Lutz:2003a}), that is, a notion of dimension that is defined
for individual points in a space instead of subsets. This is made possible by
effectivizing the notion of measure (in the sense of \citet{Martin-Lof:1966a}),
which restricts the collection of nullsets to a countable family, thereby
allowing for singleton sets not to be null. Such singletons are considered
\emph{random} in this framework. The correspondence between randomness and the
various flavors of Kolmogorov complexity (a cornerstone of the theory of
algorithmic randomness) then re-emerges in form of an asymptotic information
density that bears close resemblance to the entropy of a stationary process.
Effective dimension makes the connections between entropy and fractal dimension
that have been unearthed by many authors (arguably starting with
\citet{Billingsley:1965a} and \citet{Furstenberg:1967a}) very transparent.
Moreover, effective dimension points to new ways to compute fractal dimensions,
by investigating the algorithmic complexities of single points in a set (e.g.\
\citet{Lutz:2017a}), in particular when studied in relation to other
\emph{pointwise} measures of complexity, such as irrationality exponents in
diophantine approximation \citep{Becher:2017a}.

This survey tries to present these developments in one coherent narrative.
Throughout, we mostly focus on Hausdorff dimension, and on a small fraction of
results on effective dimension and information, in order to streamline the
presentation. For other notions such as packing or box counting dimension, refer
to \citep{Falconer:2003a,Cutler:1993a} for the classical theory, and to
\citep{Reimann:2004a,Terwijn:2004a,Downey-Hirschfeldt:2010a,Staiger:2007a} for
many results concerning effective dimension.

The outline of the paper is as follows. In
Section~\ref{sec:information_measures}, we cover the basic theory of information
measures, introducing entropy and Kolmogorov complexity, highlighting the common
foundations of both. In Section~\ref{sec:hdim}, we briefly introduce Hausdorff
measures and dimension, along with some fundamental examples. Arguably the
central section of the paper,
Section~\ref{sec:hausdorff_dimension_and_information} develops the theory of
effective dimension, establishes the connection with asymptotic Kolmogorov
complexity, and describes the fundamental instance of entropy vs dimension --
the Shannon-Macmillan-Breiman Theorem. Finally, in
Section~\ref{sec:multifractal_measures}, we present some new observations
concerning multifractal measures. It turns out that the basic idea of pointwise
dimension emerges here, too.

It is the goal of this paper to be accessible for anyone with a basic background
in computability theory and measure theory. Where the proofs are easy to give
and of reasonable length, we include them. In all other cases we refer to the
literature.

\section{Information Measures and Entropy} 
\label{sec:information_measures}
Suppose \(X\) is a finite, non-empty set. We choose an element $a \in X$ and
want to communicate our choice to another person, through a binary channel,
i.e.\ we can only transmit bits $0$ and $1$. How many bits are needed to
transmit our choice, if nothing else is known about $X$ but its cardinality? To
make this possible at all, we assume that we have agreed on a numbering of $X =
\{a_1, \dots, a_n\}$ that is known to both parties. We can then transmit the
\emph{binary code} for the index of $a$, which requires at most $\log_2 n =
\log_2 |X|$ bits. Of course, if we had an $N$-ary channel available, we would
need $\log_N |A|$ bits. We will mostly work with binary codes, so $\log$ will
denote the binary logarithm $\log_2$.

Often our choice is guided by a probabilistic process, i.e.\ we have $X$ given
as a \emph{discrete random variable} $X$ has countably many outcomes, and we
denote the range of $X$ by $\{a_0, a_1, a_2, \ldots\}$. Suppose that we
repeatedly choose from $X$ at random and want to communicate our choice to the
other party. Arguably the central question of information theory is:
\begin{quote}
	\em How do we minimize the expected number of bits we need to transmit to
  communicate our choice?
\end{quote}

Depending on the distribution of the random variable, we can tweak our coding
system to optimize the length of the code words.

\begin{defn} \label{def:bin_code}
	A \emph{binary code} for a countable set $X$ is a one-one function $c: \{x_i
	\colon i \in \Nat\} \to \Str$.
\end{defn}

Here, $\Str$ denotes the set of all finite binary strings. In the
non-probabilistic setting, when $X$ is a finite set, we essentially use a
\emph{fixed-length code} $c:X \to \{0,1\}^n$, by mapping
\[
	x_i \mapsto \text{ binary representation of } i.
\]
In the probabilistic setting, if the distribution of the random variable $X$
does not distinguish between the outcomes, i.e.\ if $X$ is equidistiributed,
this will also be the best we can do.

However, if the possible choices have very different probabilities, we could
hope to save on the \emph{expected} code word length
\[
	\sum_{i} \Len{c(a_i)} \cdot \mathbb{P}(X = a_i),
\]
where $\Len{c(a_i)}$ is the length of the string $c(a_i)$, by assigning shorts
code words to $a$ of high probability. The question then becomes how small the
expected code word length can become, and how we would design a code to minimize
it.

\medskip Another way to approach the problem of information transmission is by
trying to measure the \emph{information gained} by communicating only partial
information. Suppose our random variable $X$ is represented by a partition \[
[0,1] = \stackrel{\cdot}{\bigcup_i} X_i, \] where each $X_i$ is an interval such
that $\mathbb{P}(X = a_i) = |X_i|$, i.e.\ the lengths of the intervals mirror
the distribution of $X$. Pick, randomly and uniformly, an $x \in [0,1]$. If we
know which $X_i$ $x$ falls in, we gain knowledge of about the first $-\log
|X_i|$ bits of the binary expansion of $x$. Therefore, on average, we gain \[
H(X) = - \sum \mathbb{P}(X=a_i) \cdot \log \mathbb{P}(X=a_i) = -\sum |X_i| \cdot
\log |X_i| \] bits of information. We put $0 \cdot \log 0 =: 0$ to deal with
$\mathbb{P}(X=a_i) = 0$. $H(X)$ is called the \emph{entropy} of $X$.  We will
apply $H$ not only to random variables, but also to measures in general. If
$\vec{p} = (p_1, \dots, p_n)$ is a finite probability vector, i.e.\ $p_i \geq 0$ and
\(
   \sum_i p_i = 1,
 \) 
then we write $H(\vec{p})$ to denote $\sum_i p_i \cdot \log(p_i)$. Similarly, for $p \in [0,1]$, $H(p)$ denotes $p\cdot \log p + (1-p)\cdot \log (1-p)$.

The emergence of the term $-\log \mathbb{P}$ is no coincidence. Another, more
axiomatic way to derive it is as follows. Suppose we want to measure the
information gain of an event by a function $I: [0,1] \to \Real^{\geq 0}$, that
is, $I$ depends only on the probability of an event. We require the following
properties of $I$:
\begin{enumerate}[({I}1)]
\item $I(1) = 0$. An almost sure event gives us no new information.
\item $I$ is decreasing. The lower the probability of an event, the more
  information we gain from the knowledge that it occurred.
\item $I(x \cdot y) = I(x) + I(y)$. If $X$ and $Y$ are independent, then
  $\mathbb{P}(X \cap Y) = \mathbb{P}(X) \cdot \mathbb{P}(Y)$, and hence
  $I(\mathbb{P}(X \cap Y)) = I(\mathbb{P}(X)) + I(\mathbb{P}(Y))$. In other
  words, information gain is additive for independent events.
\end{enumerate}

\begin{prop}
  If $I: [0,1] \to \Real^{\geq 0}$ is a function satisfying (I1)-(I3), then
  there exists a constant $c$ such that
  \begin{equation*}
        I(x) = -c \log(x).
  \end{equation*}
\end{prop}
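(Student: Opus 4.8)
The plan is to linearize the multiplicative functional equation (I3) by passing to logarithmic coordinates, reducing it to Cauchy's additive functional equation, and then to exploit monotonicity (I2) to single out the unique well-behaved solution. Since $\log$ denotes $\log_2$ here, the natural change of variables is to set $g(t) := I(2^{-t})$ for $t \in [0,\infty)$. For $x = 2^{-s}$ and $y = 2^{-t}$ with $s,t \geq 0$ we have $xy = 2^{-(s+t)}$, so (I3) becomes
\[
  g(s+t) = g(s) + g(t),
\]
Cauchy's equation on $[0,\infty)$. By (I1) we get $g(0) = I(1) = 0$; since $t \mapsto 2^{-t}$ is decreasing and $I$ is decreasing by (I2), the composite $g$ is nondecreasing; and $g \geq 0$ because $I$ takes values in $\Real^{\geq 0}$. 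I would restrict attention to $x \in (0,1]$, i.e.\ $t \in [0,\infty)$: substituting $x = 0$ into (I3) gives $I(0) = I(0) + I(y)$ and would force $I \equiv 0$, so (I3) must be read on $(0,1]$, with the value at $0$ recovered afterward as the limit $-c\log 0 = +\infty$.

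Next I would solve the Cauchy equation under the monotonicity constraint. Additivity yields $g(nt) = n\,g(t)$ for all $n \in \Nat$ by induction, so with $c := g(1) \geq 0$ we obtain $g(n) = cn$ and, from $q\,g(p/q) = g(p) = p\,g(1)$, also $g(r) = cr$ for every nonnegative rational $r$. To extend this to arbitrary real $t \geq 0$ I would use (I2): given rationals $r \leq t \leq r'$, monotonicity of $g$ gives
\[
  cr = g(r) \leq g(t) \leq g(r') = cr',
\]
and letting $r,r' \to t$ squeezes $g(t)$ to $ct$. Hence $g(t) = ct$ for all $t \geq 0$.

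Finally, back-substituting $t = -\log x$ (so that $x = 2^{-t}$) yields
\[
  I(x) = g(-\log x) = -c\log x \qquad (x \in (0,1]),
\]
where the constant $c = g(1) = I(1/2) \geq 0$ is exactly as asserted. The only delicate point is the passage from rational to arbitrary real arguments; everything preceding it is a routine consequence of additivity together with (I1). Here monotonicity (I2) is precisely the regularity hypothesis that excludes the pathological Hamel-basis solutions of Cauchy's equation and forces linearity, so I expect that squeezing step to be the crux of the argument.
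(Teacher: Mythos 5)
The paper states this proposition without proof (it is offered as a classical fact motivating the form of the entropy), so there is no in-paper argument to compare against; judged on its own, your proof is correct and is the standard one. The substitution $g(t) = I(2^{-t})$ turns (I3) into Cauchy's additive equation on $[0,\infty)$, (I1) gives $g(0)=0$, additivity gives $g(r)=cr$ on the nonnegative rationals with $c=g(1)=I(1/2)$, and the monotonicity from (I2) is exactly the regularity needed to rule out Hamel-basis solutions and extend linearity to all reals by squeezing between rationals. Your remark about the endpoint $x=0$ is also well taken: if (I3) is read literally on all of $[0,1]$ then $I(0)=I(0)+I(y)$ forces $I\equiv 0$, so the proposition only makes sense with (I3) restricted to $(0,1]$, which is how you (correctly) interpret it. The argument is complete.
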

In this sense, $-\log$ is the only reasonable information function, and the
entropy $H$ measures the expected gain in information.

It is also possible to axiomatically characterize entropy directly. Many such
characterizations have been found over the years, but the one by
\citet{Khinchin:1957a} is arguably still the most popular one.

We will see
next how the entropy of $X$ is tied to the design of an optimal code.


\subsection{Optimal Prefix Codes} 
\label{sub:optimal_prefix_codes}

We return to the question of how to find an optimal code, i.e.\ how to design a
binary code for a random variable $X$ such that its average code word length is
minimal. While a binary code guarantees that a single code word can be uniquely
decoded, we would like to have a similar property for \emph{sequences of code
words}.

\begin{defn}
	Given a binary code $c$ for a countable set $X$, its \emph{extension} $c^*$ to
    $X^{< \Nat}$ is defined as
	\[
		c^*(x_0 x_1\dots x_n ) = c(x_0) \Conc c(x_1) \Conc \dots \Conc c(x_n),
	\]
	where $\Conc$ denotes concatenation of strings. We say a binary code $c$ is
  \emph{uniquely decodable} if its extension is one-one.
\end{defn}

One way to ensure unique decodability is to make $c$ \emph{prefix free}. A set
$S \subseteq \Str$ is prefix free if no two elements in $S$ are prefixes of one
another. A code is prefix free if its range is. It is not hard to see that the
extension of a prefix free code is indeed uniquely decodable.

Binary codes live in $\Str$. The set $\Str$ is naturally connected to the space
of infinite binary sequences $\Cant$. We can put a metric on $\Cant$ by letting
\[
  d(x,y) =  \begin{cases}
      2^{-N} & \text{ if } x \neq y \text{ and } N = \min{i \colon x(i) \neq y(i)} \\ 
      0 & \text{ otherwise}.
  \end{cases}
\]
Given $\varepsilon > 0$, the $\varepsilon$-ball $B(x,\varepsilon)$ around $x$ is
the so-called \emph{cylinder set}
\[
  \Cyl{\sigma} = \{ y \in \Cant \colon \sigma \Sle y  \},
\]
where $\Sle$ denotes the prefix relation between strings (finite or infinite),
and $\sigma = x\Rest{n}$ is the length-$n$ initial segment of $x$ with $n =
\lceil -\log \varepsilon \rceil$. 

Hence any string $\sigma$ corresponds to a basic open cylinder $\Cyl{\sigma}$
with diameter $2^{-\Len{\sigma}}$. This induces a Borel measure $\lambda$ on
$\Cant$, $\lambda \Cyl{\sigma} = 2^{-\Len{\sigma}}$, which is the natural
analogue to Lebesgue measure on $[0,1]$.

\medskip
Which code lengths are possible for prefix free codes? The \emph{Kraft
  inequality} gives a fundamental restraint.

\begin{thm}[Kraft inequality] \label{thm:Kraft}
	Let $S \subseteq \Str$ be prefix free. Then 
	\[
		\sum_{\sigma \in S} 2^{-\Len{\sigma}} \leq 1.
	\]
	Conversely, given any sequence $l_0, l_1, l_2, \dots$ of non-negative integers
	satisfying
	\[
		\sum_i 2^{-l_i} \leq 1,
	\]
	there exists a prefix-free set $\{\sigma_0, \sigma_1, \dots \} \subseteq \Str$
  of code words such that $\Len{\sigma_i} = l_i$.
\end{thm}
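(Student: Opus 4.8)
The plan is to prove the two directions separately, as they require quite different ideas. For the forward direction (prefix-freeness implies the inequality holds), the key observation is the geometric one built into the metric on \Cant: each code word $\sigma \in S$ corresponds to a cylinder $\Cyl{\sigma}$ of measure $\lambda\Cyl{\sigma} = 2^{-\Len{\sigma}}$. The prefix-freeness of $S$ says precisely that no $\sigma$ is an initial segment of another, which translates to the cylinders $\{\Cyl{\sigma} : \sigma \in S\}$ being pairwise disjoint. First I would make this disjointness explicit: if $\sigma, \tau \in S$ are distinct and $\Cyl{\sigma} \cap \Cyl{\tau} \neq \varnothing$, then some common point forces one string to be a prefix of the other, contradicting prefix-freeness. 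Then, since $\lambda$ is a measure and the cylinders are disjoint subsets of \Cant, countable additivity (or monotonicity for the finite sums and taking a limit) gives
\[
  \sum_{\sigma \in S} 2^{-\Len{\sigma}} = \sum_{\sigma \in S} \lambda\Cyl{\sigma} = \lambda\Bigl( \bigcup_{\sigma \in S} \Cyl{\sigma} \Bigr) \leq \lambda(\Cant) = 1.
\]

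For the converse, given lengths $l_0, l_1, l_2, \dots$ with $\sum_i 2^{-l_i} \leq 1$, I would construct the code words greedily. The idea is to think of each string $\sigma$ of length $\ell$ as occupying the dyadic subinterval of $[0,1]$ of length $2^{-\ell}$ determined by reading $\sigma$ as binary digits after the point; a prefix-free set then corresponds to a packing of $[0,1]$ by disjoint such intervals. To make the greedy construction work cleanly I would first sort the lengths into nondecreasing order, $l_{i_0} \leq l_{i_1} \leq \cdots$, assign the code words in that order, and keep a running partial sum $s_k = \sum_{j < k} 2^{-l_{i_j}}$. At step $k$, I would let $\sigma_{i_k}$ be the length-$l_{i_k}$ binary expansion of $s_k$ (padded with zeros on the right to length $l_{i_k}$), which geometrically means placing the next interval flush against those already placed.

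The main obstacle, and the step requiring the most care, is verifying that this greedy assignment actually produces a prefix-free set. The crucial point is that processing lengths in nondecreasing order guarantees no later (longer or equal-length) code word can be a prefix of an earlier one, simply because it is at least as long; and no earlier code word $\sigma_{i_j}$ (with $j < k$) can be a prefix of $\sigma_{i_k}$, because the interval associated to $\sigma_{i_k}$ starts at position $s_k \geq s_{j+1} = s_j + 2^{-l_{i_j}}$, which lies entirely past the interval associated to $\sigma_{i_j}$, and an interval being a prefix of another forces containment of the corresponding dyadic intervals. Finally, I would note that the hypothesis $\sum_i 2^{-l_i} \leq 1$ ensures $s_k \leq 1$ at every stage, so $s_k$ genuinely has a valid length-$l_{i_k}$ binary expansion and the construction never runs out of room. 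I expect this order-and-packing bookkeeping to be the delicate part; the measure-theoretic forward direction is essentially immediate once disjointness is identified.
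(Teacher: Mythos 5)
Your proof is correct and follows essentially the same route as the paper: the forward direction via disjointness of the cylinders $\Cyl{\sigma}$ and additivity of $\lambda$, and the converse via the greedy ``leftmost available'' (Shannon--Fano) packing after sorting the lengths into nondecreasing order, which is exactly the paper's construction. The one detail worth making explicit is that sorting guarantees each partial sum $s_k$ is an integer multiple of $2^{-l_{i_k}}$, which is what ensures $s_k$ has a terminating binary expansion of length $l_{i_k}$; the paper records this same fact as ``the remaining measure is a multiple of $2^{-l_{k+1}}$.''
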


\begin{proof}[Proof sketch] ($\Rightarrow$) Any prefix free set $S$ corresponds
  to a disjoint union of open cylinders
  \begin{equation*}
    U = \stackrel{\cdot}{\bigcup_{\sigma \in S}} \Cyl{\sigma}.
  \end{equation*}
  The Lebesgue measure $\lambda$ of $\Cyl{\sigma}$ is $2^{-\Len{\sigma}}$. Hence
  by the additivity of measures,
  \begin{equation*}
    1 \geq \lambda(U) = \sum_{\sigma \in S} 2^{-\Len{\sigma}}.
  \end{equation*}

  ($\Leftarrow$) We may assume the $l_i$ are given in non-decreasing order:
  \[
  	l_0 \leq l_1 \leq \dots,
  \]
  which implies the sequence $(2^{-l_i})$ is non-increasing. We construct a
  prefix code by ``shaving off'' cylinder sets from the left. We choose as
  $\sigma_0$ the leftmost string of length $l_0$, i.e.\ $0^{l_0}$. Suppose we
  have chosen $\sigma_0, \dots, \sigma_k$ such that $\Len{\sigma_i} = l_i$ for
  $i = 0,\dots, k$. The measure we have left is
  \begin{equation*}
    1 - (2^{-l_0}+ \cdots + 2^{-l_k}) \geq 2^{-l_{k+1}},
  \end{equation*}
  since $\sum_i 2^{-l_i} \leq 1$. And since $2^{-l_0} \geq \dots \geq 2^{-l_k}
  \geq 2^{-l_{k+1}}$, the remaining measure is a multiple of $2^{-l_{k+1}}$.
  Therefore, we can pick $\sigma_{k+1}$ to be the leftmost string of length
  $l_{k+1}$ that does not extend any of the $\sigma_0, \dots, \sigma_k$.
     
  This type of code is known as the \emph{Shannon-Fano code}. 

\end{proof}

Subject to this restraint, what is the minimum expected length of a code for a
random variable $X$? Suppose $X$ is discrete and let $p_i =\mathbb{P}(X = x_i)$.
We want to find code lengths $l_i$ such that
\[
  L = \sum p_i l_i
\]
is minimal, where the $l_i$ have to satisfy     
\[
	\sum 2^{-l_i} \leq 1, 
\]
the restriction resulting from the Kraft inequality.

The following result, due to Shannon, is fundamental to information theory.
\begin{thm}\label{thm:shannon_coding}
 Let $L$ be the expected length of a binary prefix code of a discrete random
 variable $X$. Then
\[
	L \geq H(X).
\]
Furthermore, there exists a prefix code for $X$ whose expected code length does
not exceed $H(X)+1$.
\end{thm}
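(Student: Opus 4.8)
The plan is to prove the two inequalities separately. The lower bound $L \geq H(X)$ is the information-theoretic heart of the statement and follows from the non-negativity of relative entropy (Gibbs' inequality); the upper bound is constructive and uses the converse direction of the Kraft inequality (Theorem~\ref{thm:Kraft}) applied to a carefully chosen set of code lengths.

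For the lower bound, write $p_i = \mathbb{P}(X = x_i)$ and let $l_i$ be the lengths of an arbitrary binary prefix code, so that $L = \sum_i p_i l_i$ and, by Kraft, $c := \sum_i 2^{-l_i} \leq 1$. The idea is to compare the given code against the ``ideal'' distribution $q_i := 2^{-l_i}/c$, which is a genuine probability vector. Substituting $l_i = -\log q_i - \log c$ rewrites the gap as
\[
  L - H(X) = \sum_i p_i\log\frac{p_i}{q_i} \;-\; \log c.
\]
Since $c \leq 1$ we have $\log c \leq 0$, so it suffices to show that the first sum — the relative entropy of $(p_i)$ against $(q_i)$ — is non-negative. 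That is exactly Gibbs' inequality, which I would establish from the elementary estimate $\ln t \leq t-1$ (equivalently $\log t \leq (t-1)/\ln 2$): applying it termwise to $t = q_i/p_i$ gives $\sum_i p_i \log(q_i/p_i) \leq (\sum_i q_i - \sum_i p_i)/\ln 2 = 0$, and the lower bound follows at once.

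For the upper bound I would simply exhibit a good code. Choose the integer lengths $l_i := \Ceil{-\log p_i}$. Because $l_i \geq -\log p_i$ we have $2^{-l_i} \leq p_i$, hence $\sum_i 2^{-l_i} \leq \sum_i p_i = 1$, so these lengths satisfy the Kraft inequality; by the converse direction of Theorem~\ref{thm:Kraft} there is a prefix code realizing them — this is precisely the Shannon-Fano code. On the other hand $l_i < -\log p_i + 1$, so
\[
  L = \sum_i p_i l_i < \sum_i p_i\bigl(-\log p_i + 1\bigr) = H(X) + 1,
\]
which is the claimed bound.

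The main obstacle is the lower bound, and within it the only non-formal ingredient is Gibbs' inequality; the rewriting of $L - H(X)$ and the handling of the normalizing constant $c$ are routine once one observes that $\log c \leq 0$. The one subtlety worth flagging is outcomes with $p_i = 0$: these are absorbed by the convention $0 \cdot \log 0 = 0$ already fixed for $H$, and in the Shannon-Fano construction one simply restricts the code to the support $\{\, i : p_i > 0 \,\}$, where $-\log p_i$ is finite.
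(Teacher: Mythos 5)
Your proof is correct and takes essentially the same route as the paper: the identical decomposition $L - H(X) = \sum_i p_i \log(p_i/q_i) - \log c$ with $q_i = 2^{-l_i}/c$ for the lower bound, and the Shannon--Fano lengths $l_i = \Ceil{-\log p_i}$ fed into the converse Kraft inequality for the upper bound. The only divergence is that you establish the non-negativity of the relative entropy directly from $\ln t \leq t-1$ rather than from Jensen's inequality, a substitution the paper explicitly notes is available; your handling of the $p_i = 0$ case is a welcome extra detail.
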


To see why no code can do better than entropy, note that
\begin{align*}
	L - H(X) & = \sum p_i l_i + \sum p_i \log(p_i) \\
				& = - \sum p_i \log 2^{-l_i} + \sum p_i \log(p_i).
\end{align*}

Put $c = \sum 2^{-l_i}$ and $r_i = 2^{-l_i}/c$. $\vec{r}$ is the probability
distribution induced by the code lengths. Then
\begin{align*}
	L - H(X) & = \sum p_i \log p_i - \sum p_i \log \left (\frac{2^{-l_i}}{c} c\right ) \\
				& = \sum p_i \log \frac{p_i}{r_i} - \log c \\
\end{align*}
Note that $c \leq 1$ and hence $\log c \leq 0$. The expression
\[
   \sum p_i \log \frac{p_i}{r_i}
\]
is known as the \emph{relative entropy} or \emph{Kulback-Leibler (KL) distance}
$D(\vec{p} \parallel \vec{r})$ of distributions $\vec{p}$ and $\vec{r}$. It is
not actually a distance as it is neither symmetric nor does it satisfy the
triangle inequality. But it nevertheless measures the difference between
distributions in some important aspects. In particular, $D(\vec{p} \parallel
\vec{r}) \geq 0$ and is equal to zero if and only if $\vec{p} = \vec{r}$. This
can be inferred from Jensen's inequality.

\begin{thm}[Jensen's inequality] If $f$ is a convex function and $X$ is a random
  variable, then
  \[
    \mathbb{E} f(X) \geq f(\mathbb{E}X).
  \]
\end{thm}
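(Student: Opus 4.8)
The plan is to prove the inequality via the \emph{supporting line} property of convex functions, which gives a clean argument valid for arbitrary integrable random variables. The geometric intuition is that a convex function lies above each of its tangent lines, so comparing $f$ pointwise against the tangent at the mean and then averaging recovers the stated inequality.

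First I would establish the supporting line lemma: for a convex $f$ and any point $\mu$ in the interior of its domain, there is a slope $a$ (a subgradient of $f$ at $\mu$) with
\[
  f(x) \geq f(\mu) + a\,(x - \mu)
\]
for every $x$ in the domain. This follows from the fact that a convex function has finite left and right derivatives at each interior point, with the left derivative no larger than the right; any $a$ lying between these two values determines a line through $(\mu, f(\mu))$ that stays weakly below the graph of $f$.

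Next I would set $\mu = \mathbb{E}X$ and substitute the random variable $X$ for the free variable, so that
\[
  f(X) \geq f(\mathbb{E}X) + a\,(X - \mathbb{E}X)
\]
holds almost surely. Taking expectations of both sides, and using linearity together with $\mathbb{E}(X - \mathbb{E}X) = 0$, yields
\[
  \mathbb{E}f(X) \geq f(\mathbb{E}X) + a\,\mathbb{E}(X - \mathbb{E}X) = f(\mathbb{E}X),
\]
which is exactly what we want.

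The main obstacle is the supporting line lemma: one must verify that the one-sided derivatives of $f$ exist and are finite at $\mu = \mathbb{E}X$, so that a legitimate subgradient $a$ is available there. For the discrete random variables that power the application to the Kullback-Leibler distance, this analytic step can be avoided entirely by a direct induction on the number of outcomes, taking the two-point inequality $f(t x_1 + (1-t)x_2) \leq t f(x_1) + (1-t) f(x_2)$, $t \in [0,1]$, as the base case and regrouping the probability weights in the inductive step.
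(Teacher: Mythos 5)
Your argument is correct: the supporting-line (subgradient) proof is the standard and fully rigorous route to Jensen's inequality, and your handling of the one technical point---existence of a finite subgradient at $\mu = \mathbb{E}X$, guaranteed at interior points of the domain of a convex function---is exactly right. There is nothing in the paper to compare against: Jensen's inequality is stated there as a known fact with no proof supplied, since it is used only as a black box to deduce $D(\vec{p} \parallel \vec{r}) \geq 0$. One small remark on your proposed alternative: the finite induction on the number of outcomes establishes the inequality only for random variables with finitely many values, whereas the KL-distance application in the paper involves countable distributions; to cover that case you would still need either a limiting argument or the supporting-line proof, so the subgradient route is the one to keep as primary.
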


In particular, it follows that $L-H(X) \geq 0$. This can also be deduced
directly using the concavity of the logarithm in the above sums, but both
KL-distance and Jensen's inequality play a fundamental, almost ``axiomatic''
role in information theory, so we reduced the problem to these two facts.

\medskip
To see that there exists a code with average code length close to entropy, let
\[
  l_i := \Ceil{\log p_i}.
\]
Then $\sum 2^{-l_i} \leq \sum p_i = 1$, and by Theorem~\ref{thm:Kraft}, there
exists a prefix code $\vec{\sigma}$ such that $\Len{\sigma_i} = l_i$. Hence $x_i
\mapsto \sigma_i$ will be a prefix code for $X$ with expected code word length
\[
  \sum p_i l_i = \sum p_i \Ceil{\log p_i} \leq \sum p_i (\log p_i + 1) = H(X) + 1.
\]

\medskip

\subsection{Effective coding}
\label{sub:eff-coding}

The transmission of information is an inherently computational act. Given $X$,
we should be able to assign code words to outcomes of $X$ effectively. In other
words, if $\{x_i\}$ collects the possible outcomes of $X$, and $\{\sigma_i\}$ is
a prefix code for $X$, then the mapping $i \mapsto \sigma_i$ should be
computable.

The construction of a code is implicit in the proof of the Kraft inequality
(Theorem~\ref{thm:Kraft}), and is already of a mostly effective nature. The only
non-effective part is the assumption that the word lengths $l_i$ are given in
increasing order. However, with careful book keeping, one can assign code words
in the order they are requested. This has been observed independently by~
\citet{Levin:2010a}, \citet{Schnorr:1973a}, \citet{Chaitin:1975b}.  

\begin{thm}[Effective Kraft inequality] \label{thm:eff-kraft}
  Let $L: \Nat \to \Nat$ be a computable mapping such that
  \[
    \sum_{i \in \Nat} 2^{-L(i)} \leq 1.
  \]
  Then there exists a computable, one-one mapping $c: \Nat \to \Str$ such that
  the image of $c$ is prefix free, and for all $i$, $\Len{c(i)} = L(i)$.
\end{thm}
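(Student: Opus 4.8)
The plan is to effectivize the greedy ``shaving off'' construction from the proof of the classical Kraft inequality (Theorem~\ref{thm:Kraft}), removing the sole non-effective ingredient there, namely the assumption that the lengths arrive in non-decreasing order. The idea is to process the requests $L(0), L(1), L(2), \dots$ in the order they come, maintaining at each stage a finite collection of ``available'' cylinders whose disjoint union is exactly the portion of $\Cant$ not yet claimed by the code words assigned so far. When the request $L(i)$ arrives, I assign $c(i)$ by carving a string of length $L(i)$ out of one of the available cylinders, and then update the pool of available cylinders to reflect the leftover.

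First I would set up the bookkeeping. Maintain a finite set $R_i \subseteq \Str$ of pairwise incomparable strings representing the available region after $i$ assignments, starting with $R_0 = \{\Estr\}$ (the whole space). To serve request $L(i)$, I locate an available cylinder $\Cyl{\rho}$ with $\rho \in R_i$ and $\Len{\rho} \leq L(i)$; I set $c(i) = \rho \Conc 0^{\,L(i) - \Len{\rho}}$, the leftmost length-$L(i)$ extension of $\rho$. I then replace $\rho$ in the pool by the strings $\rho \Conc 0^{k} \Conc 1$ for $0 \le k < L(i) - \Len{\rho}$, the cylinders lying strictly to the right of $c(i)$ inside $\Cyl{\rho}$, to form $R_{i+1}$. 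By construction the union $\bigcup_{\rho \in R_i} \Cyl{\rho}$ always equals $\Cant$ minus the region already coded, the elements of $R_i$ stay pairwise incomparable, and every assigned code word is incomparable with every cylinder in $R_i$ and with every previously assigned word; hence the range of $c$ is prefix free and $c$ is one-one with $\Len{c(i)} = L(i)$. Since the whole procedure is a finite effective manipulation of strings at each step, $c$ is computable.

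The main obstacle is to verify that the pool $R_i$ is never empty of a cylinder short enough to honor the incoming request, i.e.\ that some $\rho \in R_i$ satisfies $\Len{\rho} \le L(i)$. The key invariant is that the available region has total measure $\lambda\bigl(\bigcup_{\rho \in R_i}\Cyl{\rho}\bigr) = 1 - \sum_{j<i} 2^{-L(j)}$, which by the hypothesis $\sum_j 2^{-L(j)} \le 1$ is at least $2^{-L(i)}$. I would combine this measure lower bound with a structural fact about the greedy pool: when one always shaves from the left, the available cylinders $R_i$ can be taken in left-to-right order of strictly increasing length (this is the analogue of the ``remaining measure is a multiple of $2^{-l_{k+1}}$'' observation in Theorem~\ref{thm:Kraft}). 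Consequently the longest available cylinder is the shortest available cylinder is governed by the leading dyadic digit of the remaining measure, and a cylinder of length $\le L(i)$ exists precisely because the remaining measure is at least $2^{-L(i)}$. Formally, if every $\rho \in R_i$ had $\Len{\rho} > L(i)$, then every available cylinder would have measure $< 2^{-L(i)}$, yet the left-shaving invariant forces the available region to be a union of cylinders of distinct lengths summing to a dyadic rational; such a sum that stays $\ge 2^{-L(i)}$ must include a cylinder of measure $\ge 2^{-L(i)}$, a contradiction. Once this availability lemma is in place, correctness and effectiveness follow immediately from the construction, completing the proof.
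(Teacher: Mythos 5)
Your overall strategy --- process the requests in arrival order, maintain a finite pool of pairwise incomparable ``free'' cylinders, and serve each request by carving the leftmost admissible subcylinder out of a free cylinder --- is exactly the bookkeeping argument the paper alludes to (the paper itself gives no proof and defers to Downey--Hirschfeldt), so the route is the intended one. The availability lemma is also the right thing to isolate, and your reduction of it to the two facts ``remaining measure is at least $2^{-L(i)}$'' and ``the free cylinders have pairwise distinct lengths'' is sound: a finite set of cylinders with distinct lengths all exceeding $L(i)$ has total measure strictly less than $\sum_{k>L(i)}2^{-k}=2^{-L(i)}$.

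The gap is in the distinct-lengths invariant itself. You present it as ``a structural fact about the greedy pool'' that follows from always shaving from the left, but left-shaving only governs which string you pick \emph{inside} the chosen $\Cyl{\rho}$; the invariant actually hinges on \emph{which} free cylinder $\rho$ you choose, and your construction leaves that choice unspecified (``locate an available cylinder with $\Len{\rho}\le L(i)$''). With a bad admissible choice the construction genuinely fails. Take $L(0)=L(1)=2$, $L(2)=1$, so $\sum_i 2^{-L(i)}=1$. After $c(0)=00$ the pool is $\{01,\,1\}$. If you serve $L(1)$ from $\rho=1$ (which satisfies $\Len{\rho}\le 2$), you get $c(1)=10$ and pool $\{01,\,11\}$: two cylinders of the \emph{same} length $2$, total measure $1/2=2^{-L(2)}$, but no free cylinder of length $\le 1$, so the request $L(2)=1$ cannot be honored --- and indeed no string of length $1$ is incomparable with both $00$ and $10$. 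The repair is standard but must be stated and proved: always split the \emph{longest} free $\rho$ with $\Len{\rho}\le L(i)$ (equivalently, under the left-to-right ordering your invariant provides, the leftmost one that fits), and verify by induction that this preserves pairwise distinctness of lengths --- if $m=\Len{\rho}$ is maximal among the free lengths that are $\le L(i)$, the new pieces have lengths $m+1,\dots,L(i)$, none of which already occurs in the pool. (Minor points: the sentence ``the longest available cylinder is the shortest available cylinder is governed by'' is garbled, and with left-shaving the lengths \emph{decrease} left to right rather than increase; neither affects the substance once the selection rule is fixed.)
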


A proof can be found, for example, in the book
by~\citet{Downey-Hirschfeldt:2010a}. Let us call $c$ an effective prefix-code.
As $c$ is one-one, given $\sigma \in \operatorname{range}(c)$, we can compute
$i$ such that $c(i) = \sigma$. And since the image of $c$ is prefix-free, if we
are given a sequence of concatenated code words 
\[ 
  \sigma_1\Conc \sigma_2 \Conc \sigma_3 \dots =  c(i_1) \Conc c(i_2) \Conc c(i_3) \Conc \dots,
\]
we can effectively recover the source sequence $i_1, i_2, i_3\dots$. Hence,
every effective prefix code $c$ comes with an effective \emph{decoder}.

\medskip
Let us therefore switch perspective now: Instead of mapping events to code words,
we will look at functions mapping code words to events.  An \emph{effective
prefix decoder} is a partial computable function $d$ whose domain is a
prefix-free subset of $\Str$. Since every partial recursive function is
computable by a Turing machine, we will just use the term \emph{prefix-free
machine} to denote effective prefix decoders. The range of a prefix-free machine
will be either $\Nat$ or $\Str$. Since we can go effectively from one set to the
other (by interpreting a binary string as the dyadic representation of a natural
number or by enumerating $\Str$ effectively), in this section it does not really
matter which one we are using. In later sections, however, when we look at
infinite binary sequences, it will be convenient to have $\Str$ as the range.  

If $M$ is a prefix-free machine and $M(\sigma) = m$, we can think of $\sigma$ as
an $M$-code of $m$. Note that by this definition, $m$ can have multiple codes,
which is not provided for in our original definition of a binary code
(Definition~\ref{def:bin_code}). So a certain asymmetry arises when switching
perspective. We can eliminate this asymmetry by considering only the
\emph{shortest} code. The length of this code will then, with respect to the
given machine, tell us how far the information in $\sigma$ can be
\emph{compressed}.  

\begin{defn}
  Let $M$ be a prefix-free machine. The $M$-complexity $\K_M(m)$ of a string
  $\sigma$ is defined as
  \[
    \K_M(m) = \min \{ \Len{\sigma} \colon M(\sigma) = m\},
  \]
  where $\K_M(m) = \infty$ if there is no $\sigma$ with $M(\sigma) = m$.
\end{defn}

$\K_M$ may not be computable anymore, as we are generally not able to decide whether
$M$ will halt on an input $\sigma$ and output $m$. But we can \emph{approximate
$\K_M$ from above}: Using a bootstrapping procedure, we run $M$ simultaneously
on all inputs. Whenever we see that $M(\sigma)\downarrow = m$, we compare
$\sigma$ with our current approximation of $\K_M(m)$. If $\sigma$ is shorter,
$\Len{\sigma}$ is our new approximation of $\K_M(m)$. Eventually, we will have
found the shortest $M$-code for $m$, but in general, we do not know whether our
current  approximation is actually the shortest one.

Alternatively, we can combine the multiple codes into a single one
probabilistically. Every prefix-free machine $M$ induces a distribution on
$\Nat$ by letting
\begin{equation}
  Q_M(m) = \sum_{M(\sigma)\downarrow = m} 2^{-\Len{\sigma}}. 
\end{equation}  
$Q_M$ is not necessarily a probability distribution, however, for it is not
required that the domain of $M$ covers all of $\Str$. Nevertheless, it helps to
think of $Q_M(m)$ as the probability that $M$ on a randomly chosen input halts
and outputs $m$.

Any function $Q: \Nat \to \Real^{\geq 0}$ with
\[
  \sum_m Q(m) \leq 1
\]
is called a \emph{discrete semimeasure}. The term 'discrete' is added to
distinguish these semimeasures from the \emph{continuous semimeasures} which
will be introduced in Section~\ref{sec:multifractal_measures}. In this section,
all semimeasures are discrete, so we will refer to them just as 'semimeasures'. 

If a semimeasure $Q$ is induced by a prefix-free machine, it moreover has the
property that for any $m$, the set
\[
  \{ q \in \Rat \colon  q < Q(m) \}
\]
is recursively enumerable, uniformly in $m$. In general, semimeasures with this
property are called \emph{left-enumerable}.

We can extend the effective Kraft Inequality to left-enumerable semimeasures. We
approximate a code enumerating better and better dyadic lower bounds for $Q$. As
we do not know the true value of $Q$, we cannot quite match the code lengths
prescribed by $Q$, but only up to an additive constant. 

\begin{thm}[Coding Theorem] \label{thm:coding}
  Suppose $Q$ is a left-enumerable semimeasure. Then there exists a prefix-free
  machine $M_Q$ such that for some constant $d$ and for all $m \in \Nat$,
  \[
    \K_{M_Q}(m) \leq -\log Q(m) + d
  \]
\end{thm}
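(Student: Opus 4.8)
The plan is to reduce the Coding Theorem to the effective Kraft inequality (Theorem~\ref{thm:eff-kraft}), used in its ``request'' form. Concretely, if I can computably enumerate a list of pairs $(l_i, m_i)_{i \in \Nat}$ with $l_i \in \Nat$ (a desired code length) and $m_i \in \Nat$ (a desired output) satisfying $\sum_i 2^{-l_i} \leq 1$, then I may apply Theorem~\ref{thm:eff-kraft} to the computable length function $L(i) = l_i$, obtaining a computable one-one prefix-free sequence $c(0), c(1), \dots$ with $\Len{c(i)} = l_i$, and simply define $M_Q(c(i)) := m_i$. The resulting $M_Q$ is partial computable with prefix-free domain, hence a prefix-free machine, and it hands each $m_i$ a code word of exactly the requested length. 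So everything hinges on generating the right requests from $Q$.

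First I would use left-enumerability to fix, uniformly in $m$, a computable enumeration of rationals $q_{m,0} < q_{m,1} < \cdots$ with supremum $Q(m)$ (an empty list when $Q(m)=0$). As these approximations arrive I maintain, for each $m$, the current value $\Floor{-\log q_{m,s}}$, a weakly decreasing integer-valued quantity bounded below by $\Floor{-\log Q(m)}$. Whenever it strictly drops to a new integer $k$, I issue a single request $(k+2,\,m)$. Since it only decreases and is bounded below, only finitely many requests are ever made for a fixed $m$, and the lengths issued for $m$ are $k_1+2 > k_2+2 > \cdots > k_j+2$ for strictly decreasing integers $k_1 > \cdots > k_j$. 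The whole procedure, interleaving over all $m$, is a single computable enumeration of pairs (if only finitely many requests ever arise, pad the list with dummy requests of rapidly growing length, which affects neither the bound nor the conclusion).

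The key estimate is the Kraft bound. For fixed $m$ the integers $k_1 > \cdots > k_j$ are distinct with least element $k_j = \Floor{-\log Q(m)}$ (once $q_{m,s}$ is close enough to $Q(m)$, the floor stabilizes there), so, using $2^{-\Floor{-\log Q(m)}} \leq 2\,Q(m)$,
\[
  \sum_{i=1}^{j} 2^{-(k_i + 2)} = 2^{-2} \sum_{i=1}^{j} 2^{-k_i} \leq 2^{-2} \sum_{k \geq k_j} 2^{-k} = 2^{-1} \cdot 2^{-k_j} \leq 2^{-1} \cdot 2\,Q(m) = Q(m).
\]
Summing over $m$ and invoking $\sum_m Q(m) \leq 1$ shows the total weight of all requests is at most $1$; since the partial sums at each finite stage are dominated by this, the Kraft condition holds throughout the enumeration, and Theorem~\ref{thm:eff-kraft} produces $M_Q$ as above. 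Finally, the shortest code word assigned to $m$ has length $k_j + 2 = \Floor{-\log Q(m)} + 2 \leq -\log Q(m) + 2$, so $\K_{M_Q}(m) \leq -\log Q(m) + d$ with $d = 2$.

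The main obstacle is precisely the budget control in the previous paragraph: because we must commit to code words while knowing only lower bounds for $Q(m)$, a naive scheme requesting a word for each improved rational approximation could easily overspend the available measure $1$. The device that rescues it is to request a new word only when the \emph{dyadic order of magnitude} of the approximation improves; this forces the per-$m$ lengths to be distinct integers, collapsing their contribution into a geometric series dominated by $O(Q(m))$. I would also verify the boundary behaviour when $Q(m)$ is a power of two or is zero, but the same floor estimate $2^{-\Floor{-\log Q(m)}} \leq 2\,Q(m)$ covers these cases.
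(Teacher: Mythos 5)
Your argument is correct and is essentially the intended one: the paper states the Coding Theorem without proof (deferring to Li--Vit\'anyi), but the paragraph preceding it sketches exactly your strategy --- enumerate dyadic lower bounds for $Q$, issue a new code request only when the order of magnitude $\Floor{-\log q_{m,s}}$ improves, and feed the resulting request set into the effective Kraft inequality, paying an additive constant for not knowing $Q$ exactly. The one loose end is your padding remark: since the genuine requests can have total weight arbitrarily close to $1$ (with no computable gap), padding ``of rapidly growing length'' is not automatically safe, so you should either request lengths $\Floor{-\log q_{m,s}}+3$ to reserve half the budget for dummies (giving $d=3$) or, more simply, invoke the Kraft--Chaitin theorem in its standard form for (possibly finite) c.e.\ request sets, after which the proof is complete as written.
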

For a proof see \citet{Li-Vitanyi:2008a}. Since the shortest $M$-code
contributes $2^{-\K_M(m)}$ to $Q_M(m)$, it always holds that
\[
  \K_{M_Q}(m) \leq \K_M(m) + d
\]
for all $m$ and for some constant $d$. However, if we consider universal
machines, the two processes will essentially yield the same code.

\begin{defn}
  A prefix-free machine $U$ is \emph{universal} if for each partial computable
  prefix-free function $\varphi: \Str \to \Nat$, there is a string
  $\sigma_\varphi$ such that for all $\tau$,
  \[
    U(\sigma_\varphi \Conc \tau) = \varphi(\tau)
  \]
\end{defn}
Universal prefix-free machines exist. The proof is similar to the proof that
universal (plain) Turing machines exist (see
e.g.~\citep{Downey-Hirschfeldt:2010a}).

Fix a universal prefix-free machine $U$ and define 
\[
  \K(m) = \K_U(m).
\]
$\K(m)$ is called the \emph{prefix-free Kolmogorov complexity} of $m$. The
universality of $U$ easily implies the following.

\begin{thm}[Invariance Theorem]\label{thm:K-inv}
   For any prefix-free machine $M$ there exists a constant $d_M$ such that for
   all $m$,
   \[
     \K(m) \leq \K_M(m) + d_M.
   \]
\end{thm}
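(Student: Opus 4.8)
The plan is to derive the Invariance Theorem directly from the definition of a universal prefix-free machine $U$, using the Coding Theorem only implicitly (or not at all) since the argument is purely combinatorial. First I would fix an arbitrary prefix-free machine $M$. Since $M$ is a partial computable prefix-free function, it is a legitimate instance of the $\varphi$ appearing in the definition of universality, so there exists a coding string $\sigma_M$ such that for all $\tau$ we have $U(\sigma_M \Conc \tau) = M(\tau)$.

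Next I would compare shortest codes. Suppose $\tau$ is a shortest $M$-code for $m$, so that $M(\tau) = m$ and $\Len{\tau} = \K_M(m)$; we may assume $\K_M(m) < \infty$, since otherwise the inequality is trivial. Applying the coding string gives $U(\sigma_M \Conc \tau) = M(\tau) = m$, so $\sigma_M \Conc \tau$ is a $U$-code for $m$. By the definition of $\K = \K_U$ as the length of the \emph{shortest} $U$-code, we obtain
\[
  \K(m) \leq \Len{\sigma_M \Conc \tau} = \Len{\sigma_M} + \Len{\tau} = \Len{\sigma_M} + \K_M(m).
\]
Setting $d_M := \Len{\sigma_M}$, which depends only on $M$ and not on $m$, yields the claimed bound $\K(m) \leq \K_M(m) + d_M$ for all $m$.

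The only point requiring a small amount of care is the implicit claim that the domain of $U$ restricted to strings extending $\sigma_M$ behaves like the domain of $M$, so that $\sigma_M \Conc \tau$ genuinely lies in the domain of the prefix-free machine $U$ whenever $\tau$ lies in the domain of $M$; this is immediate from the defining equation $U(\sigma_M \Conc \tau) = M(\tau)$, which asserts convergence on both sides simultaneously. I do not anticipate any real obstacle here: the entire content of the theorem is that universality lets us simulate $M$ at the fixed additive overhead $\Len{\sigma_M}$, and the prefix-free structure guarantees that prepending $\sigma_M$ produces a valid code without interfering with decodability. The main conceptual step, rather than a technical difficulty, is simply recognizing that the shortest $M$-code is automatically promoted to a $U$-code of length at most $\Len{\sigma_M}$ longer.
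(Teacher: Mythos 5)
Your argument is correct and is precisely the one the paper intends: the paper omits the proof, remarking only that ``the universality of $U$ easily implies'' the theorem, and the intended content is exactly your observation that prepending the coding string $\sigma_M$ turns a shortest $M$-code into a $U$-code of length $\K_M(m) + \Len{\sigma_M}$. Taking $d_M = \Len{\sigma_M}$ completes the proof just as you describe.
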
 

Theorems~\ref{thm:coding} and~\ref{thm:K-inv} together imply that up to an
additive constant,
\[
  \K = \K_U \leq \K_{Q_U} \leq \K_U = \K, 
\]
that is, up to an additive constant, \emph{both coding methods agree} (see
Figure~\ref{fig:compl-vs-semimeasure}).

\medskip
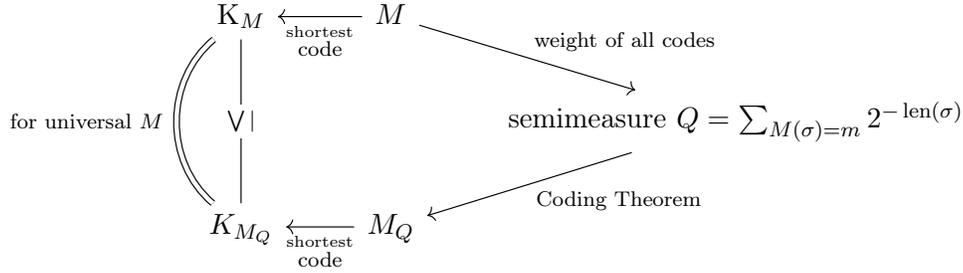
\begin{figure}
\begin{center}
\begin{tikzcd}
 \K_M \arrow[dd, dash, "\bigvee | " description] \arrow[dd, equal, bend
 right=50, "\text{for universal } M \;"'] & M  \arrow[l,
 "\overset{\text{shortest}}{\text{code}}"] \arrow[rd, "\text{weight of all
 codes}"] &   \\
 \bigvee \!\! | &  & \text{semimeasure } Q =  \sum_{M(\sigma) = m}
 2^{-\Len{\sigma}} \arrow[ld, "\text{Coding Theorem}"]\\
 K_{M_Q} & M_Q  \arrow[l, "\overset{\text{shortest}}{\text{code}}"]& 
\end{tikzcd}
\caption{Relating complexity and semimeasures} \label{fig:compl-vs-semimeasure}
\end{center}
\end{figure}

\medskip

The Coding Theorem also yields a relation between $\K$ and Shannon entropy $H$.
Suppose $X$ is a random variable with a computable distribution $p$. As $\K$ is
the length of a prefix code on $\Str$, Shannon's lower bound
(Theorem~\ref{thm:shannon_coding}) implies that $H(X) \leq \sum_m \K(m) p(m)$.
On the other hand, it is not hard to infer from the Coding Theorem that, up to a
multiplicative constant,  $2^{-\K(m)}$ dominates every computable probability
measure on $\Nat$. Therefore,
\[
    H(X) = \sum_m p(m) (-\log p(m)) \geq \sum_m p(m) \K(m) -d    
\] 
for some constant $d$ depending only on $p$. In fact, the constant $d$ depends
only on the length of a program needed to compute $p$ (which we denote $\K(p)$).

\begin{thm}[\citet{Cover:1989a}] There exists a constant $d$ such that for every
  discrete random variable $X$ with computable probability distribution $p$ on
  $\Nat$ with finite Shannon entropy $H(X)$, 
  \[
    H(X) \leq \sum_\sigma \K(\sigma)p(\sigma) \leq H(X) + \K(p) + d. 
  \]
\end{thm}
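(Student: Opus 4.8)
The plan is to prove the two inequalities separately; the left one is an immediate consequence of Shannon's coding theorem, while the right one requires us to track how the additive constant from the Invariance Theorem depends on $p$. For the lower bound, I would first observe that $\K$ supplies the code-word lengths of a genuine prefix code: if $\sigma_m$ denotes a shortest $U$-program for $m$, then the $\sigma_m$ are pairwise distinct and all lie in the domain of $U$, which is prefix free, so by the ($\Rightarrow$) direction of the Kraft inequality (Theorem~\ref{thm:Kraft}) we have $\sum_m 2^{-\K(m)} \leq 1$. Hence $\K$ realizes a prefix code for $X$, and Shannon's lower bound (Theorem~\ref{thm:shannon_coding}) gives $H(X) \leq \sum_m p(m)\K(m)$ at once.

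For the upper bound, the strategy is to build a single prefix-free decoder that treats a program for $p$ as a self-delimiting prefix, so that the \emph{only} $p$-dependent cost becomes the length of such a program. Since $p$ is a computable probability measure it is in particular a left-enumerable semimeasure, so the Coding Theorem (Theorem~\ref{thm:coding}) yields a machine $M_p$ with $\K_{M_p}(m) \leq -\log p(m) + c$, where $c$ is the universal constant coming from the Shannon--Fano construction and does not depend on $p$. The passage from $p$ to $M_p$ is uniform in a program for $p$, so I would fix one prefix-free machine $N$ that, on input $\pi \Conc \tau$, first reads a self-delimiting $U$-program $\pi$, interprets its output as the algorithm for a distribution $q$, and then runs $M_q$ on the remaining input $\tau$. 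The domain of $N$ is prefix free because $\pi$ is read self-delimitingly (there is a unique halting prefix in $\Dom(U)$) and $\tau$ ranges over the prefix-free domain of $M_q$. Taking $\pi = \pi_p$ to be a shortest program for $p$, of length $\K(p)$, gives $N(\pi_p \Conc \tau) = M_p(\tau)$ and hence $\K_N(m) \leq \K(p) + \K_{M_p}(m) \leq -\log p(m) + \K(p) + c$. The Invariance Theorem (Theorem~\ref{thm:K-inv}) then absorbs the fixed machine $N$ into a further universal constant $d_N$, yielding $\K(m) \leq -\log p(m) + \K(p) + d$ with $d = c + d_N$ independent of both $m$ and $p$.

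The theorem follows by multiplying this pointwise bound by $p(m)$ and summing over $m$: terms with $p(m)=0$ vanish under the convention $0\cdot \log 0 = 0$, and since $\sum_m p(m) = 1$ we obtain $\sum_m p(m)\K(m) \leq \sum_m p(m)(-\log p(m)) + \K(p) + d = H(X) + \K(p) + d$, with finiteness of $H(X)$ guaranteeing convergence. I expect the main obstacle to be precisely the uniformity exploited in the second paragraph: one must verify that a single fixed machine $N$ handles every computable $p$, so that the per-distribution overhead is exactly the program length $\K(p)$ rather than some larger, a priori unbounded constant $d_{M_p}$ that the Invariance Theorem would otherwise attach to each $M_p$ separately. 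Care is also needed to confirm that the constant $c$ in the Coding Theorem can genuinely be chosen independently of the input semimeasure, which is what lets the final $d$ be absolute.
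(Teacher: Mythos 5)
Your proposal is correct and follows essentially the same route as the paper: the left inequality comes from observing that $\K$ gives the lengths of a prefix code (Kraft plus Shannon's lower bound), and the right inequality comes from the Coding Theorem showing that $2^{-\K(m)}$ dominates every computable distribution $p$ up to a factor $2^{-\K(p)-d}$. The paper only sketches this; your explicit construction of the machine $N$ that reads a self-delimiting program for $p$ before an $M_p$-code is exactly the standard way to make the paper's claim that ``the constant depends only on $\K(p)$'' precise.
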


In other words, probabilistic entropy of a random variable X taking outcomes in
N is equal, within an additive constant, to the expected value of complexity of
these outcomes. This becomes interesting if $\K(p)$ is rather low compared to
the length of strings $p$ puts its mass on, for example, if $p$ is the uniform
distribution on strings of length $n$.

\bigskip

\section{Hausdorff dimension}
\label{sec:hdim}

The origins of Hausdorff dimension are geometric. It is based on finding an
optimal scaling relation between the diameter and the measure of a set.

Let $(Y,d)$ be a separable metric space, and suppose $s$ is a non-negative real
number. Given a set $U \subseteq Y$, the diameter of $U$ is given as
\[
  d(U) = \sup \{ d(x,y) \colon x,y \in U \}.
\]
Given $\delta >0$, a family $(U_i)_{i \in I}$ of subsets of $Y$ with $d(U_i)
\leq \delta$ for all $i \in I$ is called a \emph{$\delta$-family}. 

The \emph{$s$-dimensional outer Hausdorff measure} of a set $A \subseteq Y$ is
defined as 
\[
  \Hmeas^s(A) = \lim_{\delta \to 0} \left ( \inf \left \{ \sum d(U_i)^s \colon A \subseteq \bigcup U_i, \; (U_i) \text{ $\delta$-family}\right \} \right ).
\]
Since there are fewer $\delta$-families available as $\delta \to 0$, the limit
above always exists, but may be infinite. If $s = 0$, $\Hmeas$ is a counting
measure, returning the number of elements of $A$ if $A$ is finite, or $\infty$
if $A$ is infinite. If $Y$ is a Euclidean space $\Real^n$, then $\Hmeas^n$
coincides with Lebesgue measure, up to a multiplicative constant.

It is not hard to see that one can restrict the definition of $\Hmeas^s$ to
$\delta$-families of open sets and still obtain the same value for $\Hmeas^s$.

In $\Cant$, it suffices to consider only $\delta$-families of cylinder sets,
since for covering purposes any set $U \subseteq \Cant$ can be replaced by
$\Cyl{\sigma}$, where $\sigma$ is the longest string such that $\sigma \Sle x$
for all $x \in U$ (provided $U$ has more than one point). Then $d(\Cyl{\sigma})
= d(U)$. It follows that $\delta$-covers in $\Cant$ can be identified with sets
of strings $\{\sigma_i \colon i \in \Nat\}$, where $\Len{\sigma_i} \geq  \lceil
\log \delta \rceil$ for all $i$.

In $\Real$, one can obtain a similar simplification by considering the
\emph{binary net measure} $\mathcal{M}^s$, which is defined similar to
$\Hmeas^s$, but only coverings by dyadic intervals of the form $[r2^{-k},
(r+1)2^{-k})$ are permitted. For the resulting outer measure $\mathcal{M}^s$, it
holds that, for any set $E \subseteq \Real$,
\begin{equation} \label{equ:H-vs-net}
  \Hmeas^s E \leq \mathcal{M}^s E \leq 2^{s+1} \Hmeas^s E.
\end{equation}
Instead of dyadic measures, one may use other $g$-adic net measures and obtain a
similar relation as in~\eqref{equ:H-vs-net}, albeit with a different
multiplicative constant (see e.g.~\citep{Falconer:2003a}).

For each $s$, $\Hmeas$ is a \emph{metric outer measure}, that is, if $A,B
\subseteq Y$ have positive distance ($\inf \{d(x,y) \colon x \in A, y \in B\}
>0$), then
\[
    \Hmeas^s(A \cup B) = \Hmeas^s(A) + \Hmeas^s(B).
\]
This implies that all Borel sets are $\Hmeas^s$-measurable~\citep{Rogers:1970a}.

\medskip
It is not hard to show that 
\begin{equation}\label{equ:hmeas-up-0}
  \Hmeas^s(A) < \infty \; \Rightarrow \; \Hmeas^t(A) = 0 \text{ for all $t > s$},
\end{equation}
and 
\begin{equation} \label{equ:hmeas-donw-infty}
  \Hmeas^s(A) > 0 \; \Rightarrow \; \Hmeas^t(A) = \infty \text{ for all $t < s$}.  
\end{equation}
Hence the number
\[
  \Hdim(A) = \inf \{ s \colon \Hmeas(A) = 0 \} = \sup\{ s \colon \Hmeas(A) = \infty \}
\]
represents the ``optimal'' scaling exponent (with respect to diameter) for
measuring $A$. $\dim_H(A)$ is called the \emph{Hausdorff dimension} of $A$.
By~\eqref{equ:H-vs-net}, working in $\Real$, we can replace $\Hmeas^s$ by the
binary net measure $\mathcal{M}^s$ in the definition of Hausdorff dimension. On of the theoretical advantages of Hausdorff dimension over other concepts that measure fractal behavior (such as box counting dimension) is its \emph{countable stability}.

\begin{prop}\label{pro:Hdim-stable}
  Given a countable family $\{A_i\}$ of sets, it holds that
  \[
    \Hdim \bigcup A_i = \sup \left \{ \Hdim A_i \colon i \in \Nat \right \}. 
  \]
\end{prop}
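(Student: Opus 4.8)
The plan is to prove the two inequalities separately. The inequality $\sup_i \Hdim A_i \leq \Hdim \bigcup_i A_i$ I would get essentially for free from monotonicity of dimension: if $A \subseteq B$, then every cover of $B$ also covers $A$, so $\Hmeas^s(A) \leq \Hmeas^s(B)$ for all $s$, and in particular $\Hmeas^s(B)=0$ forces $\Hmeas^s(A)=0$, giving $\Hdim A \leq \Hdim B$. Applying this to each inclusion $A_i \subseteq \bigcup_j A_j$ and taking the supremum over $i$ yields the easy direction.

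For the reverse inequality I would fix an arbitrary real $s > \sup_i \Hdim A_i$ and aim to show $\Hdim \bigcup_i A_i \leq s$; letting $s$ decrease to the supremum then finishes. Since $s > \Hdim A_i$ for every $i$, the definition of $\Hdim A_i$ as an infimum lets me pick $s_i$ with $\Hdim A_i \leq s_i < s$ and $\Hmeas^{s_i}(A_i)=0$, and then \eqref{equ:hmeas-up-0} upgrades this to $\Hmeas^s(A_i)=0$ for every $i$. The decisive step is countable subadditivity of $\Hmeas^s$, which yields $\Hmeas^s(\bigcup_i A_i) \leq \sum_i \Hmeas^s(A_i) = 0$ and hence $\Hdim \bigcup_i A_i \leq s$. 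To make the subadditivity concrete I would combine covers scale by scale: given $\delta, \varepsilon > 0$, for each $i$ choose a $\delta$-family covering $A_i$ of total $s$-weight at most $\varepsilon 2^{-i}$ (possible since the $\delta$-approximant to $\Hmeas^s(A_i)$ is already $0$); the union of these families is then a $\delta$-family covering $\bigcup_i A_i$ of total weight at most $\varepsilon \sum_i 2^{-i} \leq 2\varepsilon$, so the $\delta$-approximant to $\Hmeas^s(\bigcup_i A_i)$ is at most $2\varepsilon$, and letting $\delta \to 0$ and then $\varepsilon \to 0$ gives $\Hmeas^s(\bigcup_i A_i)=0$.

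I expect the only genuine obstacle to be this reindexing of countably many covers into a single one. It is available for free once one knows that $\Hmeas^s$ is an outer measure, but I would spell out the cover-combining estimate because it pinpoints exactly where the hypothesis is used: the finiteness of $\sum_i 2^{-i}$ is what keeps the total weight small, and the argument collapses for uncountable index sets. Indeed, $[0,1]$ is an uncountable union of its singletons, each of dimension $0$ while the whole has dimension $1$, so countability cannot be dropped. Everything else is routine bookkeeping with the scaling implication \eqref{equ:hmeas-up-0} and the definition of $\Hdim$ as an infimum.
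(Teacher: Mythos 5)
Your proof is correct: the monotonicity direction, the use of \eqref{equ:hmeas-up-0} to upgrade $\Hmeas^{s_i}(A_i)=0$ to $\Hmeas^s(A_i)=0$, and the explicit scale-by-scale combination of covers with weights $\varepsilon 2^{-i}$ establishing countable subadditivity of $\Hmeas^s$ are all sound, and the singleton counterexample correctly locates where countability is needed. The paper states this proposition without proof, so there is nothing to compare against, but your argument is the standard one and is complete.
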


\medskip

\subsubsection*{Examples of Hausdorff dimension} 

\mbox{}

\smallskip
(1) Any finite set has
Hausdorff dimension zero.

(2) Any set of non-zero $n$-dimensional Lebesgue measure in $\Real^n$ has
    Hausdorff dimension $n$. Hence all ``regular'' geometric objects of full
    topological dimension (e.g.\ squares in $\Real^2$, cylinders in $\Real^3$)
    have full Hausdorff dimension, too.

(3) The \emph{middle-third Cantor set}
  \[
    C = \{ x \in [0,1]\colon x = \sum_{i \geq 1} t_i\cdot 3^{-i}, \; t_i\in {0,2} \}
  \]
  has Hausdorff dimension $\ln(2)/\ln(3)$.

(4) Inside Cantor space, we can define a ``Cantor-like'' set by letting
  \[
    D = \{ x \in \Cant \colon x(2n) = 0 \text{ for all $n$}\}.
  \]
  Then $\Hdim (D) = \frac{1}{2}$.

\medskip
\subsubsection*{Generalized Cantor sets} 
\label{ssub:generalized_cantor_sets}
Later on, a more general form of Cantor set will come into play. Let $m \geq 2$
be an integer, and suppose $0 < r < 1/m$. Put $I^{0}_0 = [0,1]$. Given intervals
$I_n^{0}, \ldots, I_n^{m^n-1}$, define the intervals $I_{n+1}^0,\ldots,
I_{n+1}^{m^{n+1}-1}$ by replacing each $I_{n}^j$ by $m$ equally spaced intervals
of length $r |I_{n}^j|$. The Cantor set $C_{m,r}$ is then defined as
\[
   C_{m,r} = \bigcap_n \bigcup_i I_n^i.
\] 
It can be shown that $$\Hdim C_{m,r} = \frac{\ln(m)}{-\ln(r)},$$ see
\citep[Example 4.5]{Falconer:2003a}. In fact, it holds that $0 <
\Hmeas^{\ln(m)/-\ln(r)} (C_{m,r})< \infty$. An upper bound on
$\Hmeas^{\ln(m)/-\ln(r)}$ can be obtained by using the intervals $I_n^{0},
\ldots, I_n^{m^n-1}$ of the same level as a covering. To show
$\Hmeas^{\ln(m)/-\ln(r)} (C_{m,r}) >0$, one can show a mass can be distributed
along $C_{m,r}$ that disperses sufficiently fast. A \emph{mass distribution} on
a subset $E$ of a metric space $Y$ is a measure on $Y$ with
support\footnote{Recall that the support of a measure on a metric space $Y$ is
the largest closed subset $F$ of $Y$ such that for every $x \in F$, if $U \ni x$
is an open neighborhood of $x$, $\mu U > 0$} contained in $Y$ such that $0 < \mu
E < \infty$.

\begin{lem}[Mass Distribution Principle] \label{lem:mdp}
  Suppose $Y$ is a metric space and $\mu$ is a mass distribution on a set  $E
  \subseteq Y$. Suppose further that for some $s$ there are numbers $c > 0$ and
  $\varepsilon > 0$ such that 
  \[
    \mu U \leq c \cdot d(U)^s
  \]
  for all sets $U$ with $d(U)< \varepsilon$. Then $\Hmeas^s\geq \mu(E)/c$.
\end{lem}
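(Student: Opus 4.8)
The plan is to bound the sum $\sum d(U_i)^s$ from below for an arbitrary cover, using the measure $\mu$ as a bookkeeping device. The strategy is the standard one for the Mass Distribution Principle: a cover that is too efficient (small total $\sum d(U_i)^s$) would force the total $\mu$-mass it captures to be small, but the cover must capture all of $E$, whose mass is bounded below by $\mu(E) > 0$. Turning this tension into the stated inequality is the whole content of the argument.

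First I would fix an arbitrary $\delta$-family $(U_i)$ covering $E$ with $\delta < \varepsilon$, so that every $U_i$ satisfies $d(U_i) < \varepsilon$ and the hypothesis $\mu U_i \leq c \cdot d(U_i)^s$ applies to each of them. Since the $U_i$ cover $E$, monotonicity and countable subadditivity of the measure $\mu$ give
\[
  \mu(E) \leq \mu\!\left( \bigcup_i U_i \right) \leq \sum_i \mu(U_i).
\]
Now I would substitute the scaling hypothesis into the right-hand side:
\[
  \mu(E) \leq \sum_i \mu(U_i) \leq \sum_i c \cdot d(U_i)^s = c \sum_i d(U_i)^s.
\]
Dividing by $c > 0$ yields $\sum_i d(U_i)^s \geq \mu(E)/c$.

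Because this lower bound holds for \emph{every} $\delta$-family covering $E$ (with $\delta < \varepsilon$), it holds for the infimum over all such families, and hence for the limit as $\delta \to 0$ that defines $\Hmeas^s(E)$; the quantity $\mu(E)/c$ does not depend on $\delta$, so it survives the limit. This gives $\Hmeas^s(E) \geq \mu(E)/c$, which is the claim (the statement writes $\Hmeas^s \geq \mu(E)/c$, understood as applying to $E$).

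There is no serious obstacle here — the proof is essentially the single chain of inequalities above. The only point demanding a little care is the requirement $d(U_i) < \varepsilon$: for $\delta < \varepsilon$ every member of a $\delta$-family automatically satisfies it, so restricting attention to $\delta < \varepsilon$ is harmless and is precisely what lets the hypothesis be applied uniformly across the cover. One should also note that the $U_i$ need not be $\mu$-measurable, but outer-measure monotonicity and subadditivity (which hold for all subsets) are all that the first inequality requires, so nothing breaks.
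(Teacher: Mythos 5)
Your proof is correct: the chain $\mu(E) \leq \sum_i \mu(U_i) \leq c\sum_i d(U_i)^s$ for any $\delta$-family with $\delta < \varepsilon$, followed by taking the infimum and the limit $\delta \to 0$, is exactly the standard argument, and your remarks about the strict inequality $d(U_i) < \varepsilon$ and about needing only outer-measure monotonicity and subadditivity are the right points of care. The paper itself omits the proof and refers to Falconer, where this same argument appears, so there is nothing to compare beyond noting that you have reproduced the canonical proof.
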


For a proof, refer to \citep{Falconer:2003a}. The mass distribution principle
has a converse, \emph{Frostman's Lemma}, which we will address in
Section~\ref{sec:hausdorff_dimension_and_information}.

\section{Hausdorff dimension and information} 
\label{sec:hausdorff_dimension_and_information}

A 1949 paper by \citet{Eggleston:1949a} established a connection between
Hausdorff dimension and entropy. Let $\vec{p} = (p_1, \dots, p_N)$ be a
probability vector, i.e.\ $p_i \geq 0$ and $\sum p_i = 1$. Given $(p_1, \dots,
p_N)$, $i < N$, and $x \in [0,1]$, let
\begin{align*}
  \#(x,i,n) = & \# \text{ occurrences of digit $i$ among the first $n$ digits} \\
    & \quad \text{ of the $N$-ary expansion of $x$ }
\end{align*}
and
\[
  D(\vec{p}) = \left \{ x\in [0,1] \colon \lim_{n\to \infty} \frac{\#(x,i,n)}{n}
    = p_i \text { for all $i \leq N$}\right \}.
\]

\begin{thm}[Eggleston] \label{thm:eggleston}
For any $\vec{p}$,
\[
  \dim_H D(\vec{p}) = - \sum_{i} p_i \log_N p_i = \log_2 N \cdot H(\vec{p}).
\]
\end{thm}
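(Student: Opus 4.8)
The plan is to establish the geometric identity $\Hdim D(\vec{p}) = s$ with $s := -\sum_i p_i \log_N p_i$; the remaining equality in the statement is then just a change of base in the logarithm. Since $D(\vec{p})$ lives in $[0,1]$ and is defined through the $N$-ary expansion, I would compute the dimension using the $N$-adic net measure, covering by intervals $[rN^{-n},(r+1)N^{-n})$; by the net-measure equivalence analogous to~\eqref{equ:H-vs-net} this gives the same value as $\Hdim$. Such a level-$n$ interval is a cylinder $\Cyl{w}$ for a length-$n$ word $w$ over $\{0,\dots,N-1\}$, has diameter $N^{-n}$, and I write $k_i(w)$ for the number of occurrences of the digit $i$ in $w$. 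The bounds $\Hdim D(\vec{p})\le s$ and $\Hdim D(\vec{p})\ge s$ are proved separately.

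For the upper bound I would argue by counting. Fix $\eta>0$ and a small $\delta>0$, and for $m\in\Nat$ let $A_m$ be the set of $x$ whose first-$n$ digit frequencies stay within $\delta$ of $\vec{p}$ for all $n\ge m$. Then $D(\vec{p})\subseteq\bigcup_m A_m$, so by countable stability (Proposition~\ref{pro:Hdim-stable}) it suffices to bound each $\Hdim A_m$. For every $n\ge m$ the cylinders $\Cyl{w}$ with $\Len{w}=n$ and $|k_i(w)/n-p_i|<\delta$ form an $N^{-n}$-cover of $A_m$, and there are at most $(n+1)^N$ multinomial coefficients $\binom{n}{k_1,\dots,k_N}$ with $\vec{k}/n$ near $\vec{p}$ to count; Stirling's approximation bounds each by $N^{n(s+\phi(\delta))}$, where $\phi(\delta)\to 0$ as $\delta\to 0$, using continuity of entropy and $-\sum_i(k_i/n)\log_2(k_i/n)\to s\log_2 N$. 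Hence the $\Hmeas^{s+\eta}$-sum over this cover is at most $(n+1)^N N^{n(s+\phi(\delta)-s-\eta)}$, which tends to $0$ as $n\to\infty$ once $\delta$ is so small that $\phi(\delta)<\eta$. Thus $\Hmeas^{s+\eta}(A_m)=0$, so $\Hdim A_m\le s+\eta$, and letting $\eta\to 0$ gives $\Hdim D(\vec{p})\le s$.

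For the lower bound I would apply the Mass Distribution Principle (Lemma~\ref{lem:mdp}) to the Bernoulli product measure $\mu$ making the digits independent with $\mathbb{P}(\text{digit}=i)=p_i$, so that $\mu\Cyl{w}=\prod_i p_i^{k_i(w)}$. By the strong law of large numbers, $\mu$-almost every $x$ has limiting frequencies exactly $\vec{p}$, hence $\mu(D(\vec{p}))=1$ and $\mu$ is genuinely carried by $D(\vec{p})$. \textbf{This is where the main obstacle sits}: $\mu$ does not satisfy a uniform Frostman bound $\mu U\le c\,d(U)^s$, because on atypical cylinders, where the frequencies are far from $\vec{p}$, the weight $\mu\Cyl{w}$ can greatly exceed $d(\Cyl{w})^s$, while Lemma~\ref{lem:mdp} demands the bound for \emph{all} small sets. (A pointwise, Billingsley-type version of the principle would sidestep this, but it is not among the tools assumed here.) The remedy is to pay a small $\varepsilon$ in the exponent by restricting $\mu$ to a good set. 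Fix $\delta>0$, choose $m$ so large that the set $A_m$ above has $\mu(A_m)>0$ — possible since $\mu(\bigcup_m A_m)=1$ — and put $\nu=\mu\restriction A_m$. Every length-$n$ cylinder with $n\ge m$ that meets $A_m$ satisfies $|k_i(w)/n-p_i|<\delta$, whence $\mu\Cyl{w}\le N^{-n(s-\phi(\delta))}=d(\Cyl{w})^{\,s-\phi(\delta)}$. Any set $U$ with $d(U)<N^{-m}$ meets at most $N+1$ level-$n$ intervals with $N^{-n}\le d(U)$, so $\nu U\le (N+1)\,d(U)^{\,s-\phi(\delta)}$; Lemma~\ref{lem:mdp} then yields $\Hmeas^{s-\phi(\delta)}(A_m)\ge\mu(A_m)/(N+1)>0$, so $\Hdim D(\vec{p})\ge\Hdim A_m\ge s-\phi(\delta)$, and $\delta\to 0$ finishes the argument. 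The entire difficulty is concentrated in this truncation step: converting the almost-everywhere frequency behaviour of the Bernoulli measure into the uniform pointwise mass bound that the Mass Distribution Principle requires.
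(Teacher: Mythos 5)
Your argument is correct, but it takes a genuinely different route from the paper, which derives Eggleston's theorem from the algorithmic correspondence ``dimension $=$ complexity $=$ entropy'' (Section~\ref{sub:application_eggleston_s_theorem}). There the lower bound comes from exhibiting a $\Pi^0_1$ class $P \subseteq \overline{D}_p$ consisting of $\mu_p$-random sequences and invoking Staiger's correspondence (Theorem~\ref{thm:staiger-correspondence}) together with the effective SMB theorem (Theorem~\ref{thm:eff_SMB}) to get $\Hdim D_p \geq \LoK(P) = H(p)$; the upper bound comes from a two-part code for any $x \in D_p$ (record the digit counts of $x\Rest{n}$, then the rank of $x\Rest{n}$ among words with those counts), the estimate $\log_2 \binom{n}{k} \approx nH(k/n)$, and Ryabko's inequality (Theorem~\ref{thm:ryabko-bound}); non-computable $\vec{p}$ is handled by relativization. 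Your proof is the classical one: the same type-counting estimate drives your covering argument for the upper bound, and the lower bound replaces randomness and the SMB theorem by the strong law of large numbers plus the Mass Distribution Principle applied to the Bernoulli measure truncated to a set of near-typical frequencies. What you gain is a self-contained, fully classical argument that treats arbitrary $\vec{p}$ uniformly, with no computability hypothesis and no relativization; what the paper's proof buys is precisely the point of the survey, namely that Eggleston's theorem is an instance of the entropy--complexity--dimension identity, with the combinatorics absorbed into the SMB theorem. You correctly isolate the main technical obstacle of the classical route (the Bernoulli measure fails a uniform Frostman bound, so it must be restricted to a good set). One small repair: $A_m$ is \emph{not} a subset of $D(\vec{p})$, since frequencies can stay $\delta$-close to $\vec{p}$ without converging, so the final chain should not pass through $\Hdim A_m$; instead apply Lemma~\ref{lem:mdp} with $E = A_m \cap D(\vec{p})$, which still has $\nu$-measure $\mu(A_m) > 0$ because $\mu(D(\vec{p})) = 1$, yielding $\Hmeas^{s-\phi(\delta)}(D(\vec{p})) > 0$ directly.
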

Using an effective version of Hausdorff dimension, we can see that Eggleston’s
result is closely related to a fundamental theorem in information theory.

\subsection{Hausdorff dimension and Kolmogorov complexity} 
\label{sub:effective_dimension}


In the 1980s and 1990s, a series of papers by
Ryabko~\citep{Ryabko:1984a,Ryabko:1986a} and
Staiger~\citep{Staiger:1981a,Staiger:1989a,Staiger:1993a,Staiger:1998a}
exhibited a strong connection between Hausdorff dimension, Kolmogorov
complexity, and entropy. A main result was that if a set has high Hausdorff
dimension, it has to contain elements of high incompressibility ratio. In this
section, we focus on the space $\Cant$ of infinite binary sequences. It should
be clear how the results could be adapted to sequence spaces over arbitrary
finite alphabets. We will later comment how these ideas can be adapted to
$\Real$.

\begin{defn}
  Given $x \in \Cant$, we define the \emph{lower incompressibility ratio} of $x$
  as
  \[
    \LoK(x) = \liminf_{n \to \infty} \frac{K(x\Rest{n})}{n} 
   \]
  respectively. The lower incompressibility ratio of a subset $A \subseteq
  \Cant$ is defined as
  \[
    \LoK(A) = \sup \{ \LoK(x) \colon x \in A \} 
  \]
\end{defn}

\begin{thm}[Ryabko] \label{thm:ryabko-bound}
  For any set $A \subseteq \Cant$,
  \[
    \Hdim A \leq \LoK(A).
  \]
\end{thm}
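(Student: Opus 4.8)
The plan is to show that for every real $s > \LoK(A)$ one has $\Hmeas^s(A) = 0$. Since $\Hdim A = \inf\{s : \Hmeas^s(A) = 0\}$, every such $s$ lies in the set over which the infimum is taken, so the infimum is at most $\LoK(A)$, which is exactly the claim. So fix $s > \LoK(A)$. Because $\LoK(A)$ is the supremum of $\LoK(x)$ over $x \in A$, every $x \in A$ satisfies $\liminf_{n} \K(x\Rest{n})/n = \LoK(x) < s$, and this means that for infinitely many $n$ we have $\K(x\Rest{n}) < s n$.

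This observation dictates the cover: use the initial segments that witness low complexity. Set
\[
  S = \{ \sigma \in \Str : \K(\sigma) < s \Len{\sigma}\},
\]
and for $N \in \Nat$ let $S_N = \{\sigma \in S : \Len{\sigma} \ge N\}$. By the remark above, each $x \in A$ has infinitely many initial segments in $S$, hence at least one of length $\ge N$; therefore the cylinders $\Cyl{\sigma}$ with $\sigma \in S_N$ cover $A$. Since $d(\Cyl{\sigma}) = 2^{-\Len{\sigma}} \le 2^{-N}$, this is a $\delta$-family for $\delta = 2^{-N}$, i.e.\ an admissible cover for computing $\Hmeas^s$.

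It remains to estimate the associated $s$-dimensional sum and let $N \to \infty$. On $S$ we have $s\Len{\sigma} > \K(\sigma)$, so $2^{-s\Len{\sigma}} < 2^{-\K(\sigma)}$, and therefore
\[
  \sum_{\sigma \in S_N} d(\Cyl{\sigma})^s = \sum_{\sigma \in S_N} 2^{-s\Len{\sigma}} \le \sum_{\sigma \in \Str,\, \Len{\sigma} \ge N} 2^{-\K(\sigma)}.
\]
The key step is the Kraft inequality (Theorem~\ref{thm:Kraft}) applied to the universal prefix-free machine $U$ defining $\K$: distinct strings $\sigma$ have distinct shortest $U$-codes, and those codewords lie in the prefix-free domain of $U$, so $\sum_{\sigma \in \Str} 2^{-\K(\sigma)} \le 1 < \infty$. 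Being the tail of a convergent series over all strings, the right-hand side above tends to $0$ as $N \to \infty$. Hence the $s$-dimensional sums of our covers tend to $0$, giving $\Hmeas^s(A) = 0$, as required.

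The argument is essentially routine once the correct cover $S_N$ is identified; the two places deserving care are the following. First, the reduction $\sum_\sigma 2^{-\K(\sigma)} \le 1$ is exactly where prefix-freeness of $\Dom(U)$ is used, so that Kraft applies to the collection of shortest codewords rather than to an arbitrary set of strings. Second, one must keep honest that $S_N$ genuinely covers $A$ for \emph{every} $N$; this is precisely why the $\liminf$ (rather than a $\limsup$) in the definition of $\LoK$ makes the bound go through, since it supplies infinitely many, and hence arbitrarily long, witnessing prefixes for each point of $A$.
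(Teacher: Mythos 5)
Your proof is correct. The paper states Ryabko's theorem without giving a proof, but your argument --- covering $A$, for each $N$, by the cylinders $\Cyl{\sigma}$ with $\K(\sigma) < s\Len{\sigma}$ and $\Len{\sigma} \geq N$, then bounding the $s$-dimensional sum by the tail of $\sum_\sigma 2^{-\K(\sigma)} \leq 1$ via the Kraft inequality --- is the standard route and is essentially the same device the paper itself deploys in the $(\Rightarrow)$ direction of the proof of Theorem~\ref{thm:s-rand_K}, so nothing further is needed.
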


Staiger was able to identify settings in which this upper bound is also a lower
bound for the Hausdorff dimension, in other words, the dimension of a set equals
the least upper bound on the incompressibility ratios of its individual members.
In particular, he showed it holds for any $\Sigma^0_2$ (lightface) subset of
$\Cant$.

\begin{thm}[Staiger] \label{thm:staiger-correspondence}
  If $A \subseteq \Cant$ is $\Sigma^0_2$, then
  \[
    \Hdim A = \LoK(A).
  \]
\end{thm}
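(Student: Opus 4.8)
The plan is to prove the nontrivial inequality $\LoK(A) \le \Hdim A$; combined with Ryabko's bound (Theorem~\ref{thm:ryabko-bound}) this yields equality. First I would reduce to the case of a $\Pi^0_1$ class. Writing the lightface $\Sigma^0_2$ set as $A = \{x : \exists m\ \forall k\ R(m,k,x\Rest{k})\}$ for a computable predicate $R$, each slice $P_m = \{x : \forall k\ R(m,k,x\Rest{k})\}$ is a $\Pi^0_1$ class, uniformly in $m$, and equals $[T_m]$ for the computable tree $T_m = \{\sigma : R(m,k,\sigma\Rest{k})$ holds for all $k \le \Len{\sigma}\}$. Since $\LoK(A) = \sup_m \LoK(P_m)$ directly from the definition of $\LoK$, and $\Hdim A = \sup_m \Hdim P_m$ by countable stability (Proposition~\ref{pro:Hdim-stable}), it suffices to prove $\LoK(P) \le \Hdim P$ for a single $\Pi^0_1$ class $P = [T]$ with $T$ computable.

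So fix such a $P$ and a rational $t > \Hdim P$, so that $\Hmeas^t(P) = 0$. Because covers in $\Cant$ may be taken to consist of cylinders, and $P$ is compact, for every $k$ there is a \emph{finite} antichain $F_k \subseteq \Str$ with $P \subseteq \bigcup_{\tau \in F_k}\Cyl{\tau}$ and $\sum_{\tau\in F_k} 2^{-t\Len{\tau}} < 2^{-k}$: start from a cylinder cover of small $t$-weight witnessing $\Hmeas^t(P)=0$, extract a finite subcover by compactness, and prune to an antichain. The key effectivity claim is that such an $F_k$ can be found \emph{computably} from $k$. Indeed, the weight condition is decidable, while ``$F$ covers $P$'' is semidecidable: the set $T_F = \{\sigma\in T : $ no prefix of $\sigma$ lies in $F\}$ is a computable subtree, $[T_F] = P \setminus \bigcup_{\tau\in F}\Cyl{\tau}$, and by König's lemma $F$ covers $P$ iff $T_F$ is finite, which we detect by searching for an empty level. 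Dovetailing a search over all finite $F$ and accepting the first one passing both tests therefore terminates, since a valid $F_k$ exists. Note also that every $\tau\in F_k$ satisfies $2^{-t\Len{\tau}} < 2^{-k}$, hence $\Len{\tau} > k/t$.

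Next I would convert the covers into short codes. Assigning to each $\tau\in F_k$ the request of a codeword of length $\Ceil{t\Len{\tau}} - k + p_k$, where $p_k = O(\log k)$ is the length of a prefix-free code for $k$, the requested lengths are nonnegative (as $t\Len{\tau} > k$) and satisfy Kraft's bound: $\sum_k 2^{-p_k}\sum_{\tau\in F_k}2^{-(\Ceil{t\Len{\tau}}-k)} \le \sum_k 2^{-p_k}\, 2^{k}\sum_{\tau\in F_k}2^{-t\Len{\tau}} \le \sum_k 2^{-p_k} \le 1$. Since the requests are enumerable (the $F_k$ being uniformly computable), the effective Kraft inequality (Theorem~\ref{thm:eff-kraft}; equivalently the Coding Theorem~\ref{thm:coding}) produces a prefix-free machine realizing them, and the Invariance Theorem~\ref{thm:K-inv} gives a constant $C = C_{P,t}$ with $\K(\tau) \le t\Len{\tau} - k + O(\log k) + C$ for every $\tau \in F_k$. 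Now fix $x \in P$. For each $k$ the cover $F_k$ contains a (unique) $\tau_{x,k} \Sle x$, necessarily of the form $x\Rest{n_k}$ with $n_k = \Len{\tau_{x,k}} > k/t$, so $n_k \to \infty$. Substituting, $\K(x\Rest{n_k}) \le t\,n_k - k + O(\log k) + C \le t\,n_k$ for all sufficiently large $k$, whence $\K(x\Rest{n_k})/n_k \le t$. As the $n_k$ are unbounded, this forces $\LoK(x) = \liminf_n \K(x\Rest{n})/n \le t$. Taking the supremum over $x \in P$ and then the infimum over rationals $t > \Hdim P$ yields $\LoK(P) \le \Hdim P$, and reassembling the slices via the two suprema above gives $\LoK(A) \le \Hdim A$.

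The main obstacle is the effectivity step in the second paragraph: for an \emph{arbitrary} set no computable sequence of small-weight covers need exist, so this is exactly where the lightface $\Sigma^0_2$ hypothesis is used, through the compactness of the $\Pi^0_1$ slices (König's lemma) that makes ``$F$ covers $P$'' semidecidable. Once the covers are obtained, the passage to complexity bounds is a routine Kraft--Chaitin argument, and the crucial quantitative gain---that the $t$-weight of $F_k$ tends to $0$---serves two purposes at once: it forces the covering cylinders of each point to lengthen ($n_k \to \infty$), and, after rescaling by $2^k$, it makes the codes cheap enough to beat $t\,n_k$.
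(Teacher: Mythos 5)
Your argument is correct. Note that the paper states Theorem~\ref{thm:staiger-correspondence} without proof, deferring to Staiger's original papers, so there is no internal proof to compare against; but every step of your proposal checks out. The reduction of a lightface $\Sigma^0_2$ set to uniformly computable $\Pi^0_1$ slices, the use of countable stability on the dimension side against the trivial identity $\LoK(\bigcup_m P_m)=\sup_m\LoK(P_m)$ on the complexity side, the compactness/K\"onig argument making ``$F$ covers $P$'' semidecidable (which is exactly where the lightface hypothesis does its work), and the Kraft--Chaitin conversion of small-weight covers into short descriptions are all sound. Your coding step is in fact the set-level analogue of machinery the paper does develop: it mirrors the ($\Leftarrow$) direction of the proof of Theorem~\ref{thm:s-rand_K}, where an $s$-test is turned into a left-enumerable semimeasure and the Coding Theorem~\ref{thm:coding} yields the complexity upper bound; the genuinely new ingredient you supply is that for a $\Pi^0_1$ class the witnessing finite covers can be produced uniformly computably rather than merely shown to exist. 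Two minor points, neither of which affects correctness: the weight condition $\sum_{\tau\in F}2^{-t\Len{\tau}}<2^{-k}$ is most easily justified as semidecidable (approximate the algebraic summands from above), which is all the dovetailed search needs; and when $\Hdim P=1$ the argument requires rationals $t>1$, for which $\Hmeas^t(P)=0$ still holds in $\Cant$ (or one can note $\LoK(x)\leq 1$ always), so nothing breaks at the endpoint.
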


Using the countable stability of Hausdorff dimension, this result can be extended to arbitrary countable unions of $\Sigma^0_2$ sets.

The ideas underlying Ryabko's and Staiger's results were fully exposed when
Lutz~\citep{Lutz:2000a,Lutz:2003a} extended Martin-Löf's concept of randomness
tests for individual sequences~\cite{Martin-Lof:1966a} to Hausdorff measures.
Lutz used a variant of martingales, so called $s$-gales\footnote{A variant of this concept appeared, without explicit reference to Hausdorff dimension, already in Schnorr's fundamental book on algorithmic randomness \citep{Schnorr:1971a}}. In this paper, we will
follow the approach using variants of Martin-Löf tests. We will define a rather
general notion of Martin-Löf test which can be used both for probability
measures and Hausdorff measures.

\subsection{Effective nullsets} 
\label{sub:effective_nullsets}

\begin{defn}
  A \emph{premeasure} on $\Cant$ is a function $\rho: \Str \to [0,\infty)$. 
\end{defn}

\begin{defn}
  Let $\rho$ be a computable premeasure on $\Cant$. A \emph{$\rho$-test} is a
  sequence $(W_n)_{n \in \Nat}$ of sets $W_n \subseteq \Str$ such that for all
  $n$,
   \[
      \sum_{\sigma \in W_n} \rho(\sigma) \leq 2^{-n}.
   \]
   A set $A \subseteq \Cant$ is \emph{covered by a $\rho$-test} $(W_n)$ if 
   \[
     A \subseteq \bigcap_n \bigcup_{\sigma \in W_n} \Cyl{\sigma},
   \]
   in other words, $A$ is contained in the $G_\delta$ set induced by $(W_n)$.
\end{defn} 

Of course, whether this is a meaningful notion depends on the nature of $\rho$.
We  consider two families of premeasures, \emph{Hausdorff premeasures} of the
form $\rho(\sigma) = 2^{-\Len{\sigma}s}$, and \emph{probability premeasures},
which satisfy 
\begin{gather} 
   \rho(\Estr)  = 1, \label{equ:prob_premeasure1} \\
   \rho(\sigma)  = \rho(\sigma\Conc 0) + \rho(\sigma\Conc 1). \label{equ:prob_premeasure2}
 \end{gather} 
All premeasures considered henceforth are assumed to be either Hausdorff
premeasures or probability premeasures. The latter we will often simply refer to
as \emph{measures}.

Hausdorff premeasures are completely determined by the value of $s$, and hence
in this case we will speak of $s$-tests. By the Carathéodory Extension Theorem,
probability premeasures extend to a unique Borel probability measure on $\Cant$.
We will therefore also identify Borel probability measures $\mu$ on $\Cant$ with
the underlying premeasure $\rho_\mu(\sigma) = \mu \Cyl{\sigma}$.

Even though the definition of Hausdorff measures is more involved than that of
probability measures, if we only are interested in Hausdorff nullsets (which is
the case if we only worry about Hausdorff dimension), it is sufficient to
consider $s$-tests.

\begin{prop}
   $\Hmeas^s(A) = 0$ if and only if $A$ is covered by an $s$-test.
\end{prop}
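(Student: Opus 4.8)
The plan is to prove both implications by translating between the classical definition of $\Hmeas^s$, which involves a limit over $\delta$-covers by arbitrary sets, and the $s$-test formulation, which uses cylinder covers constrained by total weight. The bridge is the observation recorded earlier in the excerpt: in $\Cant$ every $\delta$-cover can be replaced, without increasing $\sum_i d(U_i)^s$, by a cover consisting of cylinders $\Cyl{\sigma}$, and for a cylinder we have exactly $d(\Cyl{\sigma})^s = 2^{-\Len{\sigma} s} = \rho(\sigma)$. Thus a cylinder cover $S \subseteq \Str$ with $A \subseteq \bigcup_{\sigma \in S}\Cyl{\sigma}$ contributes $\sum_{\sigma\in S}\rho(\sigma)$ to the Hausdorff sum, which is precisely the quantity bounded in the definition of an $s$-test. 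I would first record this dictionary, write $\Hmeas^s_\delta(A)$ for the inner infimum at scale $\delta$ (so that $\Hmeas^s(A) = \lim_{\delta\to 0}\Hmeas^s_\delta(A)$), and note the standing assumption $s > 0$, the case $s = 0$ being degenerate since both conditions then reduce to $A = \varnothing$.

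For the forward direction, I would assume $\Hmeas^s(A) = 0$. Since $\Hmeas^s_\delta(A)$ is non-decreasing as $\delta \to 0$ and $\Hmeas^s(A) = \sup_\delta \Hmeas^s_\delta(A)$, the hypothesis forces $\Hmeas^s_\delta(A) = 0$ for every $\delta$, so for each $n$ there is a $\delta$-cover of $A$ with Hausdorff sum $< 2^{-n}$. Converting this cover to cylinders as above yields a set $W_n \subseteq \Str$ with $A \subseteq \bigcup_{\sigma\in W_n}\Cyl{\sigma}$ and $\sum_{\sigma\in W_n}\rho(\sigma) < 2^{-n}$. The sequence $(W_n)$ is then an $s$-test, and since every $W_n$ covers $A$, so does the intersection, i.e.\ $A \subseteq \bigcap_n\bigcup_{\sigma\in W_n}\Cyl{\sigma}$.

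For the converse, suppose $(W_n)$ is an $s$-test covering $A$. The key observation is that the weight constraint automatically makes the covers fine: from $\sum_{\sigma\in W_n}\rho(\sigma)\leq 2^{-n}$ we get $\rho(\sigma) = 2^{-\Len{\sigma} s} \leq 2^{-n}$ for each $\sigma \in W_n$, hence $\Len{\sigma}\geq n/s$ and $d(\Cyl{\sigma}) = 2^{-\Len{\sigma}} \leq 2^{-n/s}$. Therefore $W_n$ is a $\delta_n$-cover of $A$ with $\delta_n = 2^{-n/s}$, witnessing $\Hmeas^s_{\delta_n}(A) \leq \sum_{\sigma\in W_n}\rho(\sigma)\leq 2^{-n}$. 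Letting $n\to\infty$, so that $\delta_n\to 0$, and using $\Hmeas^s(A)=\lim_{\delta\to 0}\Hmeas^s_\delta(A)$, I conclude $\Hmeas^s(A) \leq \lim_n 2^{-n} = 0$.

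I expect the only genuinely delicate point to be the cover-conversion step in the forward direction, specifically the handling of degenerate sets in an arbitrary $\delta$-cover: a singleton $\{x\}$ has diameter $0$ and contributes nothing to the Hausdorff sum, yet replacing it by a covering cylinder $\Cyl{x\Rest{m}}$ introduces a positive weight $2^{-ms}$. This is harmless because $m$ may be taken as large as we like, distributing a geometrically small surplus across the countably many such points so that the total added weight stays below any prescribed threshold; since this is exactly the content of the cylinder-cover reduction already asserted in the excerpt, I would invoke it rather than reprove it. The converse direction carries no real obstacle, the one insight being that the normalization $2^{-n}$ built into the test does double duty, controlling both the measure of the cover and, through the forced lower bound on string length, its mesh.
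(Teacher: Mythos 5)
Your proof is correct and follows exactly the route the paper indicates: the converse direction rests on the paper's stated observation that a small weight $\sum_{\sigma\in W_n}2^{-\Len{\sigma}s}$ forces all cylinders in $W_n$ to have small diameter, and the forward direction uses the cylinder-cover reduction for $\Hmeas^s$ in $\Cant$ recorded earlier in the text. Your extra care with singletons and with the degenerate case $s=0$ is a sensible filling-in of details the paper leaves implicit, but it is not a different argument.
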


The proof is straightforward, observing that any set $\{W_n\}$ for which
$\sum_{\sigma \in W_n} 2^{-\Len{\sigma}s}$ is small admits only cylinders with
small diameter.

Tests can be made \emph{effective} by requiring the sequence $\{W_n\}$ be
uniformly recursively enumerable in a suitable
representation~\citep{Day-Miller:2013a,Reimann-Slaman:2015a}. To avoid having to
deal with issues arising from representations of non-computable measures, we
restrict ourselves here to computable premeasures.

We call the resulting test a \emph{Martin-Löf $\rho$-test}.

If $A$ is covered by a Martin-Löf $\rho$-test, we say that $A$ is
  \emph{effectively $\rho$-null}. There are at most countably many Martin-Löf
  $\rho$-tests. Since for the premeasures we consider countable unions of null
  sets are null, for every premeasure $\rho$ there exists a non-empty set of
  points not covered by any $\rho$-test. In analogy to the standard definition
  Martin-Löf random sequences for Lebesgue measure $\lambda$, it makes sense to
  call such sequences \emph{$\rho$-random}.

\begin{defn}
  Given a premeasure $\rho$, a sequence $x \in \Cant$ is \emph{$\rho$-random} if
  $\{x\}$ is not effectively $\rho$-null.
\end{defn}

If $\rho$ is a probability premeasure, we obtain the usual notion of Martin-Löf
randomness for probability measures. For Hausdorff measures, similar to
\eqref{equ:hmeas-up-0} and \eqref{equ:hmeas-donw-infty}, we have the following
(since every $s$-test is a $t$-test for all rational $t \geq s$).

\begin{prop}
  Suppose $s$ is a rational number. If $x$ is not $s$-random, then it is not
  $t$-random for all rational $t > s$. And if $x$ is $s$-random, then it is
  $t$-random for all rational $0 <t < s$.
\end{prop}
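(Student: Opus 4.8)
The entire statement rests on a single monotonicity observation, already flagged in the text: for any string $\sigma$ and any reals $s \le t$, the inequality $2^{-\Len{\sigma} t} \le 2^{-\Len{\sigma} s}$ holds, simply because $\Len{\sigma} \ge 0$ forces $\Len{\sigma} t \ge \Len{\sigma} s$, and $u \mapsto 2^{-u}$ is decreasing. (The only boundary case, $\Len{\sigma} = \Len{\Estr} = 0$, gives the equality $1 = 1$, so nothing breaks.) The plan is therefore to first promote this pointwise bound to a statement about tests, and then read off both halves of the proposition as immediate consequences.

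First I would verify that every $s$-test is a $t$-test whenever $t \ge s$. If $(W_n)$ satisfies $\sum_{\sigma \in W_n} 2^{-\Len{\sigma} s} \le 2^{-n}$ for all $n$, then summing the pointwise inequality over $\sigma \in W_n$ gives
\[
  \sum_{\sigma \in W_n} 2^{-\Len{\sigma} t} \le \sum_{\sigma \in W_n} 2^{-\Len{\sigma} s} \le 2^{-n},
\]
so $(W_n)$ is itself a $t$-test. Crucially, effectiveness transfers for free: the \ML\ condition is that the double sequence $(W_n)$ be uniformly recursively enumerable, which is a property of the index sets alone and is entirely insensitive to the exponent appearing in the premeasure. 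Hence a sequence $(W_n)$ witnessing a \ML\ $s$-test is verbatim a \ML\ $t$-test, and it covers the very same $G_\delta$ set $\bigcap_n \bigcup_{\sigma \in W_n} \Cyl{\sigma}$.

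With this in hand the two assertions are immediate. For the first, suppose $x$ is not $s$-random, so that $\{x\}$ is covered by some \ML\ $s$-test $(W_n)$; for any rational $t > s$ this same sequence is a \ML\ $t$-test covering $\{x\}$, whence $x$ is not $t$-random. For the second I would argue by contraposition: if $x$ fails to be $t$-random for some rational $0 < t < s$, then $\{x\}$ is covered by a \ML\ $t$-test, and since $s > t$ that test is simultaneously a \ML\ $s$-test covering $\{x\}$, so $x$ is not $s$-random. Contraposing yields exactly the claim that $s$-randomness implies $t$-randomness for all rational $0 < t < s$.

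No step presents a genuine obstacle; the only thing meriting care is bookkeeping — confirming the inequality points in the correct direction and that the effectivity clause survives the change of exponent — and both checks are routine.
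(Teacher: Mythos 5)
Your proof is correct and is exactly the argument the paper intends: the proposition is justified there by the single parenthetical observation that every $s$-test is a $t$-test for all rational $t \geq s$, which is precisely the monotonicity fact you establish and then apply (directly for the first claim, by contraposition for the second). Nothing further is needed.
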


We can therefore define the following \emph{pointwise} version of Hausdorff
dimension.

\begin{defn}[Lutz] The \emph{effective} or \emph{constructive} Hausdorff
  dimension of a sequence $x \in \Cant$ is defined as
  \[
    \Hdim x = \inf \{ s > 0 \colon x \text{ not $s$-random }\}.
  \]
\end{defn}

We use the notation $\Hdim x$ to stress the pointwise analogy to (classical)
Hausdorff dimension.

\subsection{Effective dimension and Kolmogorov complexity} 
\label{sub:effective_dimension_and_kolmogorov_complexity}

Schnorr~\citep[see][]{Downey-Hirschfeldt:2010a} made the fundamental observation
that Martin-Löf randomness for $\lambda(\sigma) = 2^{-\Len{\sigma}}$ and
incompressibility with respect to prefix-free complexity coincide.
\citet{Gacs:1980a} extended this to other computable measures.

\begin{thm}[\citet{Gacs:1980a}] \label{thm:mu-rand-K}
  Let $\mu$ be a computable measure on $\Cant$. Then $x$ is $\mu$-random if and
  only of there exists a constant $c$ such that
 \[
   \forall n \; K(x\Rest{n}) \geq -\log \mu\Cyl{x\Rest{n}}.
  \] 
\end{thm}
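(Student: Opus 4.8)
The plan is to prove both implications by contraposition, trading on the tight correspondence between prefix-free complexity and left-enumerable semimeasures supplied by the Coding Theorem (Theorem~\ref{thm:coding}) and the Invariance Theorem (Theorem~\ref{thm:K-inv}), reading the asserted inequality in its standard deficiency form $\K(x\Rest{n}) \geq -\log \mu\Cyl{x\Rest{n}} - c$.

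For the direction from the complexity bound to randomness, I would assume that $x$ is \emph{not} $\mu$-random and exhibit, for each constant $c$, a position witnessing $\K(x\Rest{n}) < -\log \mu\Cyl{x\Rest{n}} - c$, so that no single constant can serve as a lower bound. Since $x$ fails to be $\mu$-random, it is covered by some Martin-L\"of $\mu$-test $(W_n)$, so $\sum_{\sigma \in W_n}\mu\Cyl{\sigma} \leq 2^{-n}$ for every $n$, and for each $n$ some initial segment of $x$ lies in $W_n$. The key move is to repackage the whole test as a single left-enumerable discrete semimeasure. Setting
\[
  Q(\sigma) = \sum_{n \,:\, \sigma \in W_{2n}} 2^{n}\,\mu\Cyl{\sigma},
\]
the ``discounting'' — using only the even levels $W_{2n}$ while inflating their weight by $2^{n}$ — keeps the total mass finite, $\sum_\sigma Q(\sigma) \leq \sum_n 2^{n}2^{-2n} \leq 1$. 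As $\mu$ is computable and each $W_n$ is recursively enumerable, $Q$ is left-enumerable, so the Coding Theorem yields a constant $d$ with $\K(\sigma) \leq -\log Q(\sigma) + d$. For each $k$, the initial segment $\tau_k = x\Rest{m_k} \in W_{2k}$ satisfies $Q(\tau_k) \geq 2^{k}\mu\Cyl{\tau_k}$, whence $\K(x\Rest{m_k}) \leq -\log\mu\Cyl{x\Rest{m_k}} - k + d$, and letting $k \to \infty$ drives the randomness deficiency to infinity.

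For the converse I would again argue contrapositively: assuming the deficiency $-\log\mu\Cyl{x\Rest{n}} - \K(x\Rest{n})$ is unbounded, I would assemble a Martin-L\"of $\mu$-test covering $x$. The natural candidate is the family of ``$c$-compressible'' strings
\[
  W_c = \{ \sigma \in \Str \colon \K(\sigma) < -\log\mu\Cyl{\sigma} - c \}.
\]
Rewriting the defining inequality as $\mu\Cyl{\sigma} < 2^{-c}\,2^{-\K(\sigma)}$ and summing, the Kraft inequality for prefix-free complexity, $\sum_\sigma 2^{-\K(\sigma)} \leq 1$, gives $\sum_{\sigma \in W_c}\mu\Cyl{\sigma} \leq 2^{-c}$, so $(W_c)$ meets the mass bound demanded of a $\mu$-test. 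Because $\K$ is approximable from above and $\mu$ is computable, membership in $W_c$ is a $\Sigma^0_1$ condition uniformly in $c$, so the test is effective. Unboundedness of the deficiency means $x$ has, for every $c$, an initial segment in $W_c$, hence $x \in \bigcap_c \bigcup_{\sigma \in W_c}\Cyl{\sigma}$; thus $x$ is effectively $\mu$-null, contradicting $\mu$-randomness.

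The two summation estimates are routine; the step I expect to carry the real weight is the semimeasure construction in the first direction, where the discounting scheme must be chosen so that the total mass stays bounded while the recovered deficiency still grows without bound along $x$. The other delicate point is verifying that $W_c$ is genuinely recursively enumerable — this rests on the upper semicomputability of $\K$ together with the computability of $\mu$, and is the hinge that turns the abstract counting bound into a bona fide Martin-L\"of test.
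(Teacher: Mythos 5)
Your proof is correct and follows essentially the same route the paper uses for the analogous Theorem~\ref{thm:s-rand_K} (the paper itself defers the proof of Theorem~\ref{thm:mu-rand-K} to G\'acs): in one direction a left-enumerable discrete semimeasure is assembled from the covering test and fed into the Coding Theorem, and in the other the sets of $c$-compressible strings are shown to form a Martin-L\"of test via the Kraft inequality. Your weighting $2^{n}\mu\Cyl{\sigma}$ on the even levels $W_{2n}$ is a cosmetic variant of the paper's weights $n\,2^{-\Len{\sigma}s-1}$ on all levels; both keep the total mass bounded while driving the deficiency to infinity along $x$.
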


One can prove a similar result for $s$-randomness.

\begin{thm} \label{thm:s-rand_K}  
  A sequence $x \in \Cant$ is $s$-random if and only if there exists a $c$ such
  that for all $n$,
  \[
    K(x\Rest{n}) \geq sn - c.
  \]   
\end{thm}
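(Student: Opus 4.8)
The plan is to prove both implications by contraposition, following the template of the Levin--Schnorr theorem and its generalization by G\'acs (Theorem~\ref{thm:mu-rand-K}), specialized to the Hausdorff premeasure $\rho(\sigma)=2^{-s\Len{\sigma}}$. The one structural difference from probability premeasures is that $\rho$ is not consistent, but this never enters the argument: all that matters is that an $s$-test is a uniformly r.e.\ sequence $(W_n)$ of sets of strings with $\sum_{\sigma\in W_n}2^{-s\Len{\sigma}}\le 2^{-n}$, and that $-\log\rho(x\Rest{n})=sn$, so the two sides of the desired inequality literally read $\K(x\Rest{n})$ and $-\log\rho(x\Rest{n})$. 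Throughout I take $s$ to be rational (the setting in which effective dimension is defined), so that the predicate $\K(\sigma)<s\Len{\sigma}-n$ is semidecidable relative to an approximation of $\K$ from above.

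For the implication ``complexity bound $\Rightarrow$ $s$-random'' I argue the contrapositive and build the canonical test directly from the complexity deficiency. First I would set $V_n=\{\sigma\in\Str:\K(\sigma)<s\Len{\sigma}-n\}$. Since $\K(\sigma)<s\Len{\sigma}-n$ forces $2^{-s\Len{\sigma}}<2^{-n}\,2^{-\K(\sigma)}$, summing over $V_n$ gives $\sum_{\sigma\in V_n}2^{-s\Len{\sigma}}\le 2^{-n}\sum_{\sigma}2^{-\K(\sigma)}\le 2^{-n}$, where the last bound is the Kraft inequality (Theorem~\ref{thm:Kraft}) applied to the prefix-free set of shortest $U$-programs. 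Because $\K$ is approximable from above, the predicate defining $V_n$ is $\Sigma^0_1$ uniformly in $n$, so $(V_n)$ is a genuine Martin-L\"of $s$-test. Now if no constant $c$ witnesses the lower bound, then for every $n$ there is a length $m$ with $\K(x\Rest{m})<sm-n$, i.e.\ $x\Rest{m}\in V_n$ and hence $x\in\bigcup_{\sigma\in V_n}\Cyl{\sigma}$; as this holds for every $n$, the test $(V_n)$ covers $x$, so $x$ is not $s$-random.

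For the reverse implication ``$s$-random $\Rightarrow$ complexity bound'' I again use contraposition: starting from an $s$-test $(W_n)$ that covers $x$, I would manufacture short descriptions of prefixes of $x$ via the effective Kraft / Coding machinery (Theorems~\ref{thm:eff-kraft} and~\ref{thm:coding}) together with the Invariance Theorem~\ref{thm:K-inv}. The natural move---requesting for each $\sigma\in W_n$ a codeword of length $\approx s\Len{\sigma}-n$---is exactly where the one real obstacle lies: the total requested weight at level $n$ is $2^{n}\sum_{\sigma\in W_n}2^{-s\Len{\sigma}}\le 2^{n}\cdot 2^{-n}=1$, so summing over $n$ diverges. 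The fix is to spend the savings more frugally by reading off only the sparser levels: request length $\lceil s\Len{\sigma}\rceil-n$ for $\sigma\in W_{2n}$. Then the weight contributed by level $n$ is at most $2^{n}\sum_{\sigma\in W_{2n}}2^{-s\Len{\sigma}}\le 2^{n}\cdot 2^{-2n}=2^{-n}$, so the grand total is $\le 1$, and the effective Kraft inequality yields a prefix-free machine $M$ with $\K_M(\sigma)\le s\Len{\sigma}-n+O(1)$ for every $\sigma\in W_{2n}$; Invariance upgrades this to $\K(\sigma)\le s\Len{\sigma}-n+O(1)$.

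To finish this direction I would extract the prefixes: since $(W_n)$ covers $x$, for each $n$ some $\sigma\in W_{2n}$ is an initial segment $x\Rest{k}$ of $x$, and the test bound $2^{-s\Len{\sigma}}\le 2^{-2n}$ forces $sk\ge 2n$, so the requested lengths are non-negative and $k\to\infty$ as $n\to\infty$. This gives $\K(x\Rest{k})\le sk-n+O(1)$ for arbitrarily large $n$, so $\K(x\Rest{k})-sk$ is unbounded below and no constant $c$ can satisfy $\K(x\Rest{n})\ge sn-c$ for all $n$. I expect the weight bookkeeping of the previous paragraph---choosing the level spacing so that a compression gain of $n$ bits is paid for by the $2^{-2n}$ mass bound---to be the only genuinely delicate step; everything else is a transcription of the Lebesgue-measure argument with $n$ replaced by $sn$.
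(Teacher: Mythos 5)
Your argument is correct and is essentially the paper's own proof: the test $V_n=\{\sigma\in\Str:\K(\sigma)<s\Len{\sigma}-n\}$ is exactly the test the paper uses for the direction ``$s$-random $\Rightarrow$ complexity bound,'' and for the converse the paper performs the same conversion of the covering $s$-test into short codes via the Coding Theorem, only with different bookkeeping (it puts weight $n\,2^{-s\Len{\sigma}-1}$ on $W_n$, gaining $\log n$ bits per level, where you thin to $W_{2n}$ to gain $n$ bits; both handle the divergence you identify). The only slip is cosmetic: the headers on your two paragraphs are interchanged --- building a test from a complexity deficiency is the contrapositive of ``$s$-random $\Rightarrow$ bound,'' while turning a covering test into short descriptions is the contrapositive of ``bound $\Rightarrow$ $s$-random.''
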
    

In other words, being $s$-random corresponds to being incompressible only to a
factor of $s$. For this reason $s$-randomness has also been studied as a notion
of \emph{partial randomness} (for example, \citep{Calude-Staiger-Terwijn:2006a})

\begin{proof}
  ($\Leftarrow$) Assume $x$ is not $s$-random. Thus there exists an $s$-test
$(W_n)$ covering $x$. Define functions $m_n: \Str \to \Real$ by 
\begin{align*}
        m_n(\sigma) & = \begin{cases}
                n 2^{-\Len{\sigma}s-1} & \text{if $\sigma \in W_n$}, \\
                0                          & \text{otherwise,}
        \end{cases} \\
        \intertext{and let}
        m(\sigma) & = \sum_{n=1}^\infty m_n(\sigma).
\end{align*}
All $m_n$ and thus $m$ are enumerable from below. Furthermore,
\[
  \sum_{\sigma \in \Str} m(\sigma) = \sum_{\sigma \in \Str} \sum_{n=1}^\infty m_n(\sigma)
      = \sum_{n=1}^\infty n \sum_{\sigma \in W_n} 2^{-\Len{w}s-1}   
      \leq \sum_{n=1}^\infty \frac{n}{2^{n+1}} \leq 1. 
\]
Hence $m$ is a semimeasure and enumerable from below. By the Coding Theorem
(Theorem~\ref{thm:coding}), there exists a constant $d$ such that for all
$\sigma$,
\[
   K(\sigma) \leq -\log m(\sigma) + d.
\] 
Now let $c > 0$ be any constant. Set $k = \Ceil{2^{c+d+1}}$. As $x$ is covered
by $(W_n)$ there is some $\sigma \in C_k$ and some $l \in \Nat$ with $\sigma =
x\Rest{l}$. This implies
\[
  K(x\Rest{l}) = K(\sigma) \leq  -\log m(\sigma) + d = -\log(k 2^{\Len{\sigma}s} ) +d \leq s l - c. 
\]
As $c$ was arbitrary, we see that $K(x\Rest{n})$ does not have a lower bound of
the form $s n - \text{constant}$.

\medskip
($\Rightarrow$) Assume for every $k$ there exists an $l$ such that $K(x\Rest{l})
< sl - k$. Let  
\begin{equation*}
  W_k := \{ \sigma \colon K(\sigma) \leq s \Len{\sigma} - k\}.
\end{equation*}
Then $(W_k)$ is uniformly enumerable, and each $W_k$ covers $x$. Furthermore,
\begin{equation*}
  \sum_{\sigma \in W_k} 2^{-\Len{\sigma}s} \leq \sum_{\sigma \in W_k} 2^{-K(\sigma)-k}  \leq 2^{-k}.
\end{equation*}
Therefore, $(W_k)$ is an $s$-test witnessing that $x$ is not $s$-random.
\end{proof}

As a corollary, we obtain the ``fundamental theorem'' of effective dimension
\citep{Mayordomo:2002a,Staiger:2005a}.  

\begin{cor}\label{cor:fundamental}
  For any $x \in \Cant$,
  \[
    \Hdim x = \LoK(x).
  \]
\end{cor}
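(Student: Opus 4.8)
The plan is to read the corollary directly off Theorem~\ref{thm:s-rand_K} by translating its characterization of $s$-randomness into a statement about $\liminf_n K(x\Rest{n})/n$. Write $d = \Hdim x$ and $\ell = \LoK(x)$. The key reformulation is that the condition appearing in Theorem~\ref{thm:s-rand_K} --- that there is a constant $c$ with $K(x\Rest{n}) \geq sn - c$ for all $n$ --- says exactly that $\inf_n\,(K(x\Rest{n}) - sn) > -\infty$. I will use this to prove two implications relating $s$-randomness to the size of $\ell$, and then compare the two descriptions of where the transition between randomness and non-randomness occurs.

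First I would record the two implications. If $x$ is $s$-random, then dividing $K(x\Rest{n}) \geq sn - c$ by $n$ gives $K(x\Rest{n})/n \geq s - c/n$, and passing to the $\liminf$ (since $c/n \to 0$) yields $\ell \geq s$. Conversely, if $\ell > s$, fix a rational $s'$ with $s < s' < \ell$; by the definition of $\liminf$ there is an $N$ with $K(x\Rest{n}) \geq s'n$ for all $n \geq N$, so $K(x\Rest{n}) - sn \geq (s'-s)n \geq 0$ for $n \geq N$, while the finitely many terms with $n < N$ are trivially bounded below. Hence $\inf_n\,(K(x\Rest{n}) - sn) > -\infty$, and Theorem~\ref{thm:s-rand_K} gives that $x$ is $s$-random.

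With these two facts in hand the corollary follows. For $d \leq \ell$: for any $s > \ell$, the fact that the $\liminf$ is below $s$ produces infinitely many $n$ with $K(x\Rest{n}) < s'n$ for a fixed $s' \in (\ell, s)$, whence $K(x\Rest{n}) - sn \to -\infty$ along that subsequence, so no constant $c$ can work and by Theorem~\ref{thm:s-rand_K} $x$ is not $s$-random; thus $s$ lies in the set whose infimum defines $d$, giving $d \leq s$, and letting $s \downarrow \ell$ yields $d \leq \ell$. For $d \geq \ell$: were $d < \ell$, I could pick $s$ with $d < s < \ell$; since $d$ is the infimum of the parameters at which $x$ fails to be $s$-random there is some $s_0 < s$ with $x$ not $s_0$-random, and the earlier monotonicity Proposition then forces $x$ to be non-$s$-random. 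But $s < \ell$ together with the converse implication above forces $x$ to be $s$-random, a contradiction. Hence $d = \ell$.

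The arithmetic here is entirely routine; the only care needed is bookkeeping. I would keep the parameters $s, s'$ rational throughout so that the definition of $s$-randomness and the monotonicity Proposition apply verbatim, noting that restricting the infimum defining $\Hdim x$ to rational $s$ does not change its value because the set of parameters at which $x$ is not $s$-random is upward closed by that Proposition. The one genuinely substantive ingredient is Theorem~\ref{thm:s-rand_K} itself, already proved; everything else is the standard passage between a bound holding ``for all $n$, up to an additive constant'' and the asymptotic statement captured by the $\liminf$, so I do not expect any real obstacle.
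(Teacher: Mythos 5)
Your proof is correct and follows essentially the same route as the paper: both derive the identity directly from Theorem~\ref{thm:s-rand_K} by translating the ``$K(x\Rest{n}) \geq sn - c$ for all $n$'' characterization into statements about $\liminf_n K(x\Rest{n})/n$, using the monotonicity proposition to handle the infimum defining $\Hdim x$. Your version is a bit more explicit about the rationality bookkeeping and the contradiction in the $\Hdim x \geq \LoK(x)$ direction, but the substance is identical.
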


\begin{proof}
  Suppose first $\Hdim x < s$. This means $x$ is not $s$-random and by
  Theorem~\ref{thm:s-rand_K}, for infinitely many $n$, $K(x\Rest{n}) \leq sn$.
  Thus $\LoK(x) \leq s$. On the other hand, if $\LoK(x) < s$, there exists
  $\varepsilon > 0$ such that $K(x\Rest{n}) < (s-\varepsilon)n$ for infinitely
  many $n$. In particular, $sn - K(x\Rest{n})$ is not bounded from below by a
  constant, which means $x$ is not $s$-random. Therefore, $\Hdim x \leq s$.
\end{proof}

As a further corollary, we can describe the correspondence between Hausdorff
dimension and Kolmogorov complexity (Theorem~\ref{thm:staiger-correspondence})
as follows. 

\begin{cor}
  For every $\Sigma^0_2$ $A \subseteq \Cant$, 
  \[
    \dim_H A = \sup_{x \in A} \dim_H x.
  \]
\end{cor}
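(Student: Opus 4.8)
The plan is to chain together the two correspondence results already established: Staiger's set-level identity (Theorem~\ref{thm:staiger-correspondence}) and the pointwise fundamental theorem (Corollary~\ref{cor:fundamental}). Since $A$ is $\Sigma^0_2$, Staiger's theorem applies and gives $\Hdim A = \LoK(A)$ outright, so the only remaining task is to rewrite the right-hand side in terms of the \emph{pointwise} effective dimension.

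First I would unfold the definition of the set-level incompressibility ratio, namely $\LoK(A) = \sup\{\LoK(x) \colon x \in A\}$. Next, for each individual $x \in A$, I would invoke Corollary~\ref{cor:fundamental}, which identifies the lower incompressibility ratio of a point with its effective Hausdorff dimension, $\LoK(x) = \Hdim x$. Substituting this pointwise identity underneath the supremum turns $\LoK(A)$ into $\sup_{x \in A} \Hdim x$. Combining this with Staiger's equality produces the chain
\[
  \Hdim A = \LoK(A) = \sup_{x \in A} \LoK(x) = \sup_{x \in A} \Hdim x,
\]
which is exactly the claimed equality.

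There is essentially no obstacle at this level of the argument, since all the analytic content has been absorbed into the two cited results. The one point worth flagging is that the supremum need not be attained: there may be no single $x \in A$ whose effective dimension equals $\Hdim A$, but this causes no difficulty because the identity is phrased as a supremum rather than a maximum. If one instead wished to bypass Staiger's theorem and argue from first principles, the genuinely hard direction would be the lower bound $\Hdim A \geq \sup_{x \in A} \Hdim x$ for $\Sigma^0_2$ sets, which is precisely where the effectivity hypothesis on $A$ is used; the reverse inequality $\Hdim A \leq \sup_{x \in A} \Hdim x$ holds for \emph{arbitrary} $A$, being nothing but Ryabko's bound $\Hdim A \leq \LoK(A)$ (Theorem~\ref{thm:ryabko-bound}) read through Corollary~\ref{cor:fundamental}.
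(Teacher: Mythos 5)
Your argument is correct and is exactly the paper's (implicit) route: the corollary is obtained by combining Staiger's identity $\Hdim A = \LoK(A)$ for $\Sigma^0_2$ sets with the pointwise identity $\LoK(x) = \Hdim x$ of Corollary~\ref{cor:fundamental}, then unfolding the definition $\LoK(A) = \sup_{x \in A}\LoK(x)$. Your closing remarks about which inequality genuinely uses the $\Sigma^0_2$ hypothesis are accurate but not part of the paper's derivation.
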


The general idea of these
results is that if a set of a rather simple nature in terms of definability,
then set-wise dimension can be described as the supremum of pointwise
dimensions. For more complicated sets, one can use relativization to obtain a
suitable correspondence. One can relativize the definition of effective
dimension with respect to an oracle $z \in \Cant$. Let us denote the
corresponding notion by $\dim_H^z$. If we relativize the notion of Kolmogorov
complexity as well, we can prove a relativized version of
Corollary~\ref{cor:fundamental}. 

\citet{Lutz:2017a} were able to prove a most general set-point correspondence
principle that is based completely on relativization, in which the Borel
complexity does not feature anymore. 

\begin{thm}[Lutz and Lutz] \label{thm:set-point-dim}
  For any set $A \subseteq \Cant$, 
  \[
    \dim_H A = \min_{z \in \Cant} \sup_{x \in A} \dim_H^z x.
  \]
\end{thm}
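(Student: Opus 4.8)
The plan is to establish the two inequalities
\[
  \dim_H A \;\leq\; \sup_{x \in A} \dim_H^z x \ \ (\text{for every } z)
  \qquad\text{and}\qquad
  \dim_H A \;\geq\; \sup_{x \in A} \dim_H^z x \ \ (\text{for some } z)
\]
separately; together they show that the infimum over $z$ equals $\dim_H A$ and is attained by the oracle produced for the second inequality, so that the infimum is in fact a minimum. Throughout I would use the relativized form of Corollary~\ref{cor:fundamental}, namely $\dim_H^z x = \liminf_n \K^z(x\Rest{n})/n$ --- the relativization of the fundamental theorem promised in the text --- together with the fact that the Kraft estimate $\sum_\sigma 2^{-\K^z(\sigma)} \leq 1$ holds for every oracle $z$.

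For the first inequality, fix $z$ and a rational $s > \sup_{x \in A} \dim_H^z x$. Then every $x \in A$ has $\liminf_n \K^z(x\Rest{n})/n < s$, so by the relativized $(\Rightarrow)$ direction of the proof of Theorem~\ref{thm:s-rand_K} each such $x$ lies in the $G_\delta$ set induced by the single $z$-effective test $W_k = \{\sigma : \K^z(\sigma) \leq s\Len{\sigma} - k\}$. The point is that $W_k$ does not depend on $x$, and the bound $\sum_{\sigma \in W_k} 2^{-s\Len{\sigma}} \leq 2^{-k}$ uses only the oracle-insensitive Kraft estimate for $\K^z$; hence $(W_k)$ is a single $s$-test covering all of $A$. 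By the characterization of $\Hmeas^s$-nullity through $s$-tests this gives $\Hmeas^s(A) = 0$, so $\dim_H A \leq s$, and letting $s$ decrease yields $\dim_H A \leq \sup_{x \in A} \dim_H^z x$. (This is just the relativized form of Ryabko's bound, Theorem~\ref{thm:ryabko-bound}.)

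For the reverse inequality I would construct a single witnessing oracle. Choose a strictly decreasing sequence of rationals $s_j \to \dim_H A$ with each $s_j > \dim_H A$. Since $\Hmeas^{s_j}(A) = 0$ and covers in $\Cant$ may be taken to consist of cylinders, for all $j$ and $k$ there is a set $\mathcal{C}_{j,k} \subseteq \Str$ with $A \subseteq \bigcup_{\sigma \in \mathcal{C}_{j,k}} \Cyl{\sigma}$ and $\sum_{\sigma \in \mathcal{C}_{j,k}} 2^{-s_j \Len{\sigma}} \leq 2^{-k}$. I would then encode the entire countable family $(\mathcal{C}_{j,k})_{j,k}$ into one real $z \in \Cant$, via a fixed pairing of $\Str \times \Nat \times \Nat$ with $\Nat$, so that membership ``$\sigma \in \mathcal{C}_{j,k}$'' is decidable from $z$ uniformly in $(\sigma,j,k)$. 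Relative to $z$, then, $(\mathcal{C}_{j,k})_k$ is, for each fixed $j$, a uniformly $z$-enumerable Martin-L\"of $s_j$-test covering $A$; hence every $x \in A$ fails to be $s_j$-random relative to $z$, and therefore $\dim_H^z x \leq s_j$. As this holds for all $j$, we obtain $\sup_{x \in A} \dim_H^z x \leq \inf_j s_j = \dim_H A$. Combined with the first inequality applied to this same $z$, we get $\sup_{x \in A} \dim_H^z x = \dim_H A$, so the infimum is a minimum attained at $z$.

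I expect the main obstacle to be the coding step in the second inequality: one must check that collapsing the countable array of covers into a single oracle really does make them \emph{simultaneously} and \emph{uniformly} available as relativized Martin-L\"of tests, so that one fixed $z$ controls all scales $s_j$ and all points $x \in A$ at once. The remaining ingredients are bookkeeping on top of the relativized fundamental theorem and the observation that the Kraft bound driving the $s$-test construction is blind to the oracle.
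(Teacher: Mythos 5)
The paper does not actually prove this theorem --- it only states it and cites Lutz and Lutz --- so there is no in-paper argument to compare against. Your proposal correctly reconstructs the standard proof of the point-to-set principle: the direction $\dim_H A \leq \sup_{x\in A}\dim_H^z x$ for \emph{every} $z$ is exactly the relativized Ryabko bound (the set $W_k=\{\sigma: \K^z(\sigma)\leq s\Len{\sigma}-k\}$ is a classical $s$-test because the Kraft inequality $\sum_\sigma 2^{-\K^z(\sigma)}\leq 1$ holds uniformly in the oracle, and classical $\Hmeas^s$-nullity does not care that $W_k$ is only $z$-enumerable), and the reverse direction is obtained by coding a countable array of near-optimal classical covers witnessing $\Hmeas^{s_j}(A)=0$ into a single oracle $z$, relative to which they become Martin-L\"of $s_j$-tests covering all of $A$ simultaneously. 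Both halves are sound, the coding step you flag as the main obstacle is routine (membership in $\mathcal{C}_{j,k}$ is $z$-decidable via a fixed pairing, and each $s_j$ is a fixed rational so the premeasure $2^{-s_j\Len{\sigma}}$ is computable), and combining the two inequalities for the constructed $z$ does show the infimum is attained. This is essentially the argument in the original Lutz--Lutz paper, except that they phrase the hard direction directly in terms of $\K^z$ (building a $z$-computable prefix code from the covers via the effective Kraft inequality) rather than routing through relativized $s$-tests and Theorem~\ref{thm:s-rand_K}; the two routes are interchangeable given the relativized form of Corollary~\ref{cor:fundamental}.
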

In other words, the Hausdorff dimension of a set is the minimum among its
pointwise dimensions, taken over all relativized worlds.

These point-vs-set principles are quite remarkable in that they have no direct
classical counterpart. While Hausdorff dimension is \emph{countably stable} (Proposition~\ref{pro:Hdim-stable}),
stability does not extend to arbitrary unions, since singleton sets always have
dimension $0$.

\subsection{Effective dimension vs randomness} 
\label{sub:effective_dimension_vs_randomness}

Let $\mu$ be a computable probability measure on $\Cant$. By
Theorem~\ref{thm:mu-rand-K}, for a $\mu$-random $x \in \Cant$, the prefix-free
complexity of $x\Rest{n}$ is bounded from below by $\log \mu \Cyl{x\Rest{n}}$.
If we can bound $\log \mu \Cyl{x\Rest{n}}$  asymptotically from below by a bound
that is linear in $n$, we also obtain a lower bound on the effective dimension
of $x$. This can be seen as an effective/pointwise version of the mass
distribution principle: If a point ``supports'' measure that decays sufficiently
fast along it, we can bound its effective dimension from below.

Consider, for example, the \emph{Bernoulli measure} $\mu_p$ on $\Cant$. Given $p
\in [0,1]$, $\mu_p$ is induced by the premeasure
\[
  \rho_p(\sigma\Conc 1) = \rho_p(\sigma)\cdot p \qquad \rho_p(\sigma\Conc 0) = \rho_p(\sigma)\cdot (1-p).
\]
As we will see later, if $x$ is $\mu$-random,
\begin{equation} \label{equ:smb-Bernoulli}
  \lim_{n \to \infty} \frac{-\log \mu_p \Cyl{x\Rest{n}}}{n} = H(p),
\end{equation}
and therefore, $\Hdim x \geq H(p)$ for any $\mu$-random $x$. It follows that the
``decay'' of $\mu$ along $x$ is bounded by $2^{-H(p)n}$.

Does the reverse direction hold, too? If $x$ has positive effective dimension,
is $x$ random for a suitably fast dispersing measure? 

\begin{thm}[\citet{Reimann:2008a}] Suppose $x \in \Cant$ is such that $\Hdim x >
  s$. Then there exists a probability measure $\mu$ on $\Cant$ and a constant
  $c$ such that for all $\sigma \in \Str$,
  \[
    \mu \Cyl{\sigma} \leq c 2^{-sn}.
  \]
\end{thm}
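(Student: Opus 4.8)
The plan is to convert the dimension hypothesis into a complexity bound, to realize that bound on an effectively closed set, and to read off the dispersion estimate from a Frostman measure on that set. By Corollary~\ref{cor:fundamental}, the hypothesis $\Hdim x > s$ is equivalent to $\LoK(x) > s$, so I would first fix a rational $t$ with $s < t < \LoK(x)$. Unwinding the definition of the $\liminf$ yields a constant $d$ with $K(x\Rest{n}) \geq t n - d$ for all $n$, and I would enlarge $d$ so that in addition $2^{-d} < \tfrac12$. Since the target rate $s$ is smaller than $t$, any measure decaying at rate $t$ will automatically satisfy the asserted bound $\mu\Cyl{\sigma} \leq c\,2^{-s\Len{\sigma}}$.

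Next I would introduce the co-c.e.\ closed (that is, $\Pi^0_1$) class
\[
  P = \{\, y \in \Cant \colon \forall n\ K(y\Rest{n}) \geq t n - d \,\},
\]
which contains $x$ and whose complement is enumerable because $K$ is approximable from above. The heart of this step is to show $\Hmeas^t(P) > 0$. Let $B$ be the set of strings $\tau$ with $K(\tau) < t\Len{\tau} - d$; the cylinders $\Cyl{\tau}$ for $\tau \in B$ cover $\Cant \setminus P$, and, using that the shortest programs form a prefix-free set (Theorem~\ref{thm:Kraft}), their weight is controlled by
\[
  \sum_{\tau \in B} 2^{-t\Len{\tau}} \leq 2^{-d}\sum_{\tau \in B} 2^{-K(\tau)} \leq 2^{-d} < \tfrac{1}{2}.
\]
Given any cover of $P$ by cylinders $\Cyl{\sigma_i}$ of diameter below $\delta$, adjoining the cylinders $\Cyl{\tau}$, $\tau \in B$, produces a cylinder cover of all of $\Cant$; since $t \leq 1$ gives $2^{-t\Len{\sigma}} \geq 2^{-\Len{\sigma}} = \lambda\Cyl{\sigma}$, subadditivity of $\lambda$ forces the total weight of the combined cover to be at least $1$. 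As the part coming from $B$ is less than $\tfrac12$, the cover of $P$ must carry weight at least $\tfrac12$; since this holds at every scale $\delta$, we conclude $\Hmeas^t(P) \geq \tfrac12 > 0$.

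With a compact set of positive $t$-dimensional Hausdorff measure in hand, I would invoke Frostman's Lemma --- the converse to the Mass Distribution Principle (Lemma~\ref{lem:mdp}) promised earlier in this section --- to obtain a probability measure $\mu$ supported on $P$ and a constant $c$ with $\mu\Cyl{\sigma} \leq c\,2^{-t\Len{\sigma}} \leq c\,2^{-s\Len{\sigma}}$ for all $\sigma \in \Str$. This already yields the displayed conclusion.

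The genuinely delicate point is the stronger statement implicit in the motivating question, namely that $x$ itself is $\mu$-random for such a dispersing $\mu$. By G\'acs's criterion (Theorem~\ref{thm:mu-rand-K}) this amounts to the lower bound $\mu\Cyl{x\Rest{n}} \geq 2^{-K(x\Rest{n}) - O(1)}$, which is consistent with the cap since $2^{-K(x\Rest{n})} \leq 2^{-tn+d}$ lies well below $c\,2^{-sn}$. The obstacle is twofold: a Frostman measure on $P$ need not route this much mass onto the single branch $x$, and, because $\mu$ is in general non-computable, the very notion of ``$\mu$-random'' must be taken relative to a representation of $\mu$, which must be prevented from encoding $x$ (otherwise $\{x\}$ is covered by a trivial test). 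I expect overcoming this to be where essentially all the work lies: one must run the Frostman construction effectively and with enough freedom to charge the branch $x$ while keeping the representation sufficiently generic over $x$, in the spirit of the analysis of representations of measures in \citep{Reimann-Slaman:2015a}.
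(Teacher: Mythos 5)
The paper itself gives no proof of this theorem --- it is stated with a citation to \citet{Reimann:2008a} and a remark identifying it as an effective Frostman Lemma --- so there is no in-paper argument to compare yours against. Judged on its own terms, your argument is correct for the statement as printed: the complexity bound $K(x\Rest{n}) \geq tn - d$, the positivity $\Hmeas^t(P) \geq \tfrac12$ for the $\Pi^0_1$ class $P$ (the comparison of the weight of a combined cylinder cover against $\lambda(\Cant)=1$, using $t\le 1$ and $\sum_\tau 2^{-K(\tau)}\le 1$, is exactly right), and the appeal to classical Frostman all go through. But you should notice that the statement as printed is degenerate: since $s < \Hdim x \leq 1$, Lebesgue measure itself satisfies $\lambda\Cyl{\sigma} = 2^{-\Len{\sigma}} \leq 2^{-s\Len{\sigma}}$, so the displayed conclusion holds with $\mu=\lambda$ and $c=1$ and none of your machinery is needed. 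The content of the theorem in \citet{Reimann:2008a} --- and what the motivating question in the surrounding text (``is $x$ random for a suitably fast dispersing measure?'') makes clear is intended --- is the omitted clause that $x$ is \emph{$\mu$-random} for the dispersing measure $\mu$.

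You correctly diagnose this in your final paragraph, and you are right that this is where essentially all the work lies; but diagnosing it is not the same as doing it, so as a proof of the intended theorem your write-up has a genuine gap. A Frostman measure on $P$ can put all of its mass on branches of $P$ far from $x$, and even if it charges $x$'s branch, randomness of $x$ requires controlling a universal test relative to a representation of the (non-computable) $\mu$, which must not compute $x$. The argument in the cited reference does not go through a single $\Pi^0_1$ class of reals plus classical Frostman at all: it works in the space of measures, considering the effectively compact class of all measures satisfying $\mu\Cyl{\sigma} \leq c\,2^{-s\Len{\sigma}}$, and uses uniform tests together with a basis-/compactness-type argument (converting a putative uniform test covering $x$ into a discrete semimeasure and applying the Coding Theorem, Theorem~\ref{thm:coding}, to contradict $K(x\Rest{n}) \geq sn - O(1)$) to extract a member of that class for which $x$ is random. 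So your route proves the literal (trivial) statement by a different and internally sound path, but the theorem one actually wants remains unproven in your proposal.
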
 

This is an effective/pointwise version of \emph{Frostman's
Lemma}~\citep{Frostman:1935a}, which in Cantor space essentially says that if a
Borel set has Hausdorff dimension $> s$, it supports a measures that disperses
at least as fast as $2^{-sn}$ (up to a multiplicative constant).

\subsection{The Shannon-Macmillan-Breiman Theorem} 
\label{sub:the_shannon_macmillan_breiman_theorem}

In \eqref{equ:smb-Bernoulli}, we already stated a special case of the
\emph{Shannon-McMillan-Breiman Theorem}, also known as the \emph{Asymptotic
Equipartition Property} or \emph{Entropy Rate Theorem}. It is one of the central
results of information theory. 

Let $T: \Cant \to \Cant$ be the \emph{shift map}, defined as
\[
  T(x)_i = x_{i+1},
\]
where $y_i$ is the $i$-th bit of $y \in \Cant$. A measure $\mu$ on $\Cant$ is
\emph{shift-invariant} if for any Borel set $A$, $\mu T^{-1}(A) = \mu A$. A
shift-invariant measure $\mu$ is \emph{ergodic} if $T^{-1}(A) = A$ implies $\mu
A = 0$ or $\mu A =1$. Bernoulli measures $\mu_p$ are an example of
shift-invariant, ergodic measures. For background on ergodic measures on
sequence spaces, see~\citep{Shields:1996a}.

\begin{thm} \label{thm:SMB} Let $\mu$ be a shift-invariant ergodic measure on
  $\Cant$. Then there exists a non-negative number $h$ such that almost surely,
  \[
    \lim_{n \to \infty} - \frac{\log \mu\Cyl{x\Rest{n}} }{n} = h
  \]
\end{thm}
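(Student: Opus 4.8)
The plan is to realize $-\log\mu\Cyl{x\Rest{n}}$ as an ergodic average of conditional information and then invoke the Birkhoff ergodic theorem. Writing $C_k(x)=\Cyl{x\Rest{k}}$ with $C_0(x)=\Cant$, I would telescope
\[
-\log\mu C_n(x)=\sum_{k=0}^{n-1} g_k(x),\qquad g_k(x)=-\log\frac{\mu C_{k+1}(x)}{\mu C_k(x)},
\]
so that $g_k(x)$ is the conditional information of the symbol $x(k)$ given $x\Rest{k}$. Since each conditional probability lies in $[0,1]$, every $g_k\geq 0$; this is what will force $h\geq 0$ at the end.

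The key structural point is that the $g_k$ are shifts of a single convergent family. Passing to the two-sided extension of $(\Cant,\mu,T)$---legitimate because $\mu$ is shift-invariant---set $\mathcal{F}_k=\sigma(y_{-1},\dots,y_{-k})$ and
\[
h_k(y)=-\log\mu\bigl(y_0\mid y_{-1},\dots,y_{-k}\bigr).
\]
Stationarity gives $g_k(x)=h_k(T^k x)$, since the conditioning set $\{x(0),\dots,x(k-1)\}$ is carried to $\{(T^kx)_{-1},\dots,(T^kx)_{-k}\}$. Because $\mathbf{1}_{y_0=a}$ has conditional expectations $\mu(y_0=a\mid\mathcal{F}_k)$ forming a martingale in the increasing filtration $\mathcal{F}_k$, Lévy's martingale convergence theorem yields $h_k\to h_\infty$ almost surely and in $L^1$, where $h_\infty(y)=-\log\mu(y_0\mid y_{-1},y_{-2},\dots)$ conditions on the entire past. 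Put $h=\mathbb{E}[h_\infty]\geq 0$.

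I would then split
\[
-\frac1n\log\mu C_n(x)=\frac1n\sum_{k=0}^{n-1} h_\infty(T^k x)+\frac1n\sum_{k=0}^{n-1}\bigl(h_k-h_\infty\bigr)(T^k x).
\]
Birkhoff's ergodic theorem applied to $h_\infty\in L^1$ sends the first average to $\mathbb{E}[h_\infty\mid\mathcal{I}]$ almost surely, and ergodicity of $\mu$ collapses the invariant $\sigma$-algebra $\mathcal{I}$, so this limit equals the constant $h$. Everything thus reduces to showing the second (error) average tends to $0$ almost surely, and this is where I expect the real work to lie.

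The hard part is controlling the error term, for which the standard tool is Breiman's lemma: if $F_k\to 0$ almost surely and $\sup_k|F_k|$ is integrable, then $\tfrac1n\sum_{k<n}F_k(T^k x)\to 0$ almost surely. Applying it with $F_k=h_k-h_\infty$ requires the integrability of the maximal information $h^{*}=\sup_k h_k$, since then $\sup_k|h_k-h_\infty|\leq h^{*}+h_\infty\in L^1$. I would establish $h^{*}\in L^1$ by a Markov-type maximal estimate: letting $\tau$ be the first $k$ with $h_k>\lambda$, a stopping-time argument on the martingale gives $\mu\{y_0=a,\,h^{*}>\lambda\}\leq 2^{-\lambda}$ for each symbol $a\in\Bit$, whence $\mu\{h^{*}>\lambda\}\leq 2^{1-\lambda}$ and $h^{*}\in L^1$. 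Combining the convergence of the principal average to $h$ with the vanishing of the error term then delivers the claimed almost-sure limit.
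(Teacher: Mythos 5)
Your proposal is correct: it is the classical Breiman proof of the Shannon--McMillan--Breiman theorem, with all the essential ingredients in place --- the telescoping into conditional informations, the passage to the two-sided (natural) extension so that $g_k = h_k\circ T^k$, L\'evy's martingale convergence for $h_k\to h_\infty$, Birkhoff plus ergodicity for the principal average, and the maximal inequality $\mu\{y_0=a,\ h^*>\lambda\}\leq 2^{-\lambda}$ giving the integrable domination needed for Breiman's lemma on the error term. The paper itself offers no proof of Theorem~\ref{thm:SMB}, deferring to \citet{Shields:1996a}, so there is nothing to compare against; your argument is the standard one found in that literature and would serve as a complete proof once the routine details (ergodicity of the natural extension, a.s.\ positivity of the limiting conditional probability so that $h_\infty$ is finite) are written out.
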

For a proof, see for example \citep{Shields:1996a}. The number $h$ is called the
\emph{entropy rate} of $\mu$ and also written as $h(\mu)$. It is also possible
to define the entropy of the underlying $\{0,1\}$-valued process. First, let
\begin{equation*}
  H(\mu^{(n)}) = -\sum_{\sigma \in A^n} \mu\Cyl{\sigma} \log \mu\Cyl{\sigma}.
\end{equation*}
One can show that this is \emph{subadditive} in the sense that
\begin{equation*}
  H(\mu^{(n+m)}) \leq H(\mu^{(n)}) + H(\mu^{(m)}),
\end{equation*}
which implies that
\begin{equation*}
  H(\mu) = \lim \frac{1}{n} H(\mu^{(n)})
\end{equation*}
exists. $H(\mu)$ is called the \emph{process entropy} of $\mu$. It is clear that
for i.i.d.\ $\{0,1\}$-valued processes, $H(\mu)$ agrees with the entropy of the
distribution on $\{0,1\}$ as defined in Section~\ref{sec:information_measures}.

Entropy rate is a local measure, as it follows the behavior of a measure along a
typical point, while process entropy captures the entropy over finer and finer
partitions of the whole space $\Cant$. Process entropy in turn is a special case
of a general definition of entropy in measure-theoretic dynamical systems, known
as \emph{Kolmogorov-Sinai entropy} (see for example \citep{Walters:1982a}).

The Shannon-Macmillan-Breiman Theorem states that the pointwise entropy rate
\begin{equation*}
  \lim_{n \to \infty} - \frac{\log \mu\Cyl{x\Rest{n}} }{n}
\end{equation*}
not only exists almost surely, but also is constant up to a set of measure zero.
Furthermore, it can be shown that this constant entropy rate coincides with the
process entropy $H(\mu)$. This fundamental principle has been established in
much more general settings, too (e.g. for amenable groups, see
\citep{Ornstein-Weiss:1983a},\citep{Lindenstrauss:2001a}).

\subsection{The effective SMB-theorem and dimension}
\label{sec:eff-SMB}

We have already seen a connection between randomness and complexity in
Theorems~\ref{thm:mu-rand-K} and~\ref{thm:s-rand_K}.

If $\mu$ is a computable ergodic measure, this immediately connects the
complexity of infinite sequences to entropy via the Shannon-Macmillan-Breiman
Theorem.  Since $\mu$-almost every sequence is $\mu$-random,  we obtain that
almost surely,
\[
  \Hdim x = \liminf_{n \to \infty} \frac{K(x\Rest{n})}{n} \geq \frac{-\log \mu\Cyl{x\Rest{n}}}{n} = H(\mu).
\]
\citet{Zvonkin-Levin:1970a} and independently \citet{Brudno:1982a} showed that
for $\mu$-almost every sequence,
\[
  \lim_{n \to \infty} \frac{K(x\Rest{n})}{n} \text{ exists},
\]
which implies in turn the asymptotic compression ratio of almost every real
equals the metric entropy of $\mu$. Analyzing the proof of the SMB-Theorem by
\citet{Ornstein-Weiss:1983a}, \citet{Vyugin:1998a} was able to establish,
moreover, that for \emph{all} $\mu$-random sequences $x$,
\[
     \limsup_{n \to \infty} \frac{K(x\Rest{n})}{n} = H(\mu).
\] 
Finally, \citet{Hoyrup:2011a} established that the $\liminf$ also equals
$H(\mu)$, thereby showing that the SMB-Theorem is effective in the sense of
Martin-Löf randomness.

\begin{thm}[Effective Shannon-Macmillan-Breiman Theorem \citep{Vyugin:1998a,Hoyrup:2011a}] \label{thm:eff_SMB}
  Let $\mu$ be a computable, ergodic, shift-invariant measure. Then, for each
  $x\in \Cant$ that is random with respect to $\mu$, 
  \[
    \dim_H x = \lim_{n \to \infty}- \frac{\log \mu\Cyl{x\Rest{n}} }{n} =  H(\mu).
  \]
\end{thm}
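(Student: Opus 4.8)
The plan is to reduce the statement to the assertion that the pointwise entropy rate converges to $H(\mu)$ along \emph{every} $\mu$-random sequence, and then to read off the dimension equality from Corollary~\ref{cor:fundamental}. Throughout, write $E_n(x) = -\log\mu\Cyl{x\Rest{n}}$ for the information content of the length-$n$ prefix. The classical Shannon--Macmillan--Breiman Theorem (Theorem~\ref{thm:SMB}) already gives $E_n(x)/n \to H(\mu)$ for $\mu$-almost every $x$; the whole point of the effective version is to upgrade ``almost every'' to ``every $\mu$-random $x$'', which amounts to covering the exceptional set by a Martin-L\"of $\mu$-test.

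First I would establish a two-sided coding estimate tying $K(x\Rest{n})$ to $E_n(x)$ for random $x$. The lower bound is immediate from G\'acs' Theorem~\ref{thm:mu-rand-K}: if $x$ is $\mu$-random there is a constant $c$ with $K(x\Rest{n}) \geq E_n(x) - c$ for all $n$. For the upper bound I would use that $\mu$ is computable: the map $\sigma \mapsto \mu\Cyl{\sigma}$ is a computable left-enumerable premeasure, so a Shannon--Fano code for the level-$n$ distribution, prefixed with a self-delimiting code for $n$, yields via the Coding Theorem (Theorem~\ref{thm:coding}) a prefix-free description of $x\Rest{n}$ of length at most $E_n(x) + K(n) + O(1) = E_n(x) + O(\log n)$. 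Dividing by $n$, the error terms $c/n$ and $O(\log n)/n$ vanish, so
\[
  \liminf_n \frac{K(x\Rest{n})}{n} = \liminf_n \frac{E_n(x)}{n}, \qquad \limsup_n \frac{K(x\Rest{n})}{n} = \limsup_n \frac{E_n(x)}{n}.
\]
By Corollary~\ref{cor:fundamental} the left liminf is exactly $\Hdim x$, so the theorem is now equivalent to $\lim_n E_n(x)/n = H(\mu)$ for every $\mu$-random $x$.

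To obtain this convergence it suffices, for each rational $\varepsilon > 0$, to cover the two ``bad'' sets
\[
  B^{+}_\varepsilon = \{ x : \mu\Cyl{x\Rest{n}} < 2^{-(H(\mu)+\varepsilon)n} \text{ for infinitely many } n \}
\]
and
\[
  B^{-}_\varepsilon = \{ x : \mu\Cyl{x\Rest{n}} > 2^{-(H(\mu)-\varepsilon)n} \text{ for infinitely many } n \}
\]
by Martin-L\"of $\mu$-tests, since a $\mu$-random $x$ then avoids all of them and hence satisfies $E_n(x)/n \to H(\mu)$. The set $B^{+}_\varepsilon$ (the source of $\limsup E_n/n \leq H(\mu)$, due to V'yugin) is the more accessible half: for fixed $n$ the low-measure strings contributing to it assemble into a test level whose $\mu$-mass is controlled by the concentration of $-\log\mu\Cyl{x\Rest{n}}$ around its $\mu$-expectation $H(\mu^{(n)})$, together with $H(\mu^{(n)})/n \to H(\mu)$.

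The main obstacle is $B^{-}_\varepsilon$, i.e.\ ruling out that $x\Rest{n}$ lands on \emph{high-measure} cylinders infinitely often; this is the $\liminf$ half settled by Hoyrup. Here the naive cover is useless, because the strings with $\mu\Cyl{\sigma} > 2^{-(H(\mu)-\varepsilon)n}$ carry most of the measure rather than little of it, so smallness must be extracted from the \emph{infinitely often} quantifier rather than from any single level. The only known route is to effectivize the combinatorial covering argument of Ornstein and Weiss underlying the classical SMB Theorem: one shows that the decay of $\mu(\{x : \mu\Cyl{x\Rest{n}} > 2^{-(H(\mu)-\varepsilon)n}\})$ is effective in $n$, so that a summable bound becomes available and the resulting covers can be combined, uniformly, into a genuine Martin-L\"of $\mu$-test. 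Carrying out this effective Ornstein--Weiss estimate is the technical heart of the argument, and for the details I would refer to \citet{Vyugin:1998a} and \citet{Hoyrup:2011a}.
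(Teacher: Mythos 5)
Your outline matches the paper's treatment of this theorem: the paper gives no self-contained proof either, but sets up exactly the same reduction (the G\'acs lower bound $K(x\Rest{n}) \geq -\log\mu\Cyl{x\Rest{n}} - c$ for random $x$, the Coding Theorem upper bound, and Corollary~\ref{cor:fundamental} to identify $\Hdim x$ with $\liminf_n K(x\Rest{n})/n$) and then attributes the $\limsup$ half to V'yugin's effectivization of the Ornstein--Weiss argument and the harder $\liminf$ half to Hoyrup, just as you do. Since you and the paper defer the same technical core to the same two references, the proposal is correct and takes essentially the same approach.
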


Note that effective dimension as a pointwise version of Hausdorff dimension is a
natural counterpart to the pointwise nature of entropy rate, the asymptotic
compression ratio (in terms of Kolmogorov complexity) forming the bridge between
the two, or, to state it somewhat simplified:
\begin{center}
  \emph{dimension $=$ complexity $=$ entropy}
\end{center}

\medskip

\subsection{Subshifts} 
\label{sub:subsection_name}

The principle above turns out to be quite persistent, even when one passes from
effective dimension of sequences (points) to (classical) dimension of sets. A
\emph{subshift} of $\Cant$ is a closed subset $X \subseteq \Cant$ invariant
under the shift map on $\Cant$.

Subshifts are a \emph{topological dynamical system}, but they can also carry an
invariant measure. There is a topological variant of entropy, which for
subshifts is given as
\[
  h_{\Op{top}}(X) = \lim_{n \to \infty} \frac{\log |\{ \sigma \colon |\sigma| = n \and \sigma \Sle X \}|}{n},
\]
that is, $h_{\Op{top}}$ measures the relative numbers of strings present in $X$
as the length goes to infinity.

\citet{Furstenberg:1967a} showed that for subshifts $X \subseteq \Cant$,
\[
  h_{\Op{top}}(X) = \dim_H X.
\]
\citet{Simpson:2015a} extended this result to multidimensional one- and
two-sided subshifts. Furthermore, he established that for \emph{any} such
subshifts, the coincidence between entropy, complexity, and dimension is
complete in the sense that
\[
  h_{\Op{top}}(X) = \dim_H X = \LoK(X).
\]
Most recently, \citet{Day:2017a} further extended this identity to computable
subshifts of $A^G$, where $G$ is a computable amenable group with computable
bi-invariant tempered Følner sequence. 

\citet{Staiger:1989a,Staiger:1993a} showed that Furstenberg's result also holds for other families of  sets: closed sets definable by finite automata, and for $\omega$-powers of languages definable by finite automata.

\subsection{Application: Eggleston's theorem} 
\label{sub:application_eggleston_s_theorem}

Using the dimension-complexity-entropy correspondence, we can also give a short
proof of Eggleston's Theorem (Theorem~\ref{thm:eggleston}). We focus on the
binary version. The proof for larger alphabets is similar. 

Let $p \in [0,1]$ be computable, and consider the Cantor space equivalent of
$D_p$,
\[
  \overline{D}_p = \{ x \in \Cant \colon \lim \frac{\#(x,0,n)}{n} = p\}, 
\]
where $\#(x,i,n)$ now denotes the number of occurrences of digit $i$ among the
first $n$ digits of $x$.

All $\mu_p$-random sequences satisfy the Law
of Large Numbers. Therefore, there exists a $\Pi^{0}_1$ class $P \subset
\overline{D}_p$ consisting of only $\mu_p$-random sequences. By
Theorem~\ref{thm:staiger-correspondence} and Theorem~\ref{thm:eff_SMB}
\[
  \dim_H D_p \geq \dim_H P = \LoK(P) = H(\mu_p) = H(p).
\]
On the other hand, for every $x \in D_p$,
\[
  \LoK(x) \leq H(p),
\]
as we can always construct a two-part code for $x \in D_p$, by giving the number
of $1$'s in $x\Rest{n}$, and then the position of $x\Rest{n}$ in a lexicographic
enumeration of all binary strings of length $n$ with that number of $1$'s. As
$\log_2 \binom{n}{k} \approx n H(k/n)$, this gives the desired upper bound for
$\LoK(x)$. Therefore, by Theorem~\ref{thm:ryabko-bound},
\[
  \dim_H D_p \leq H(p).
\]
For non-computable $p$, one can use relativized versions of the results used above.

\medskip
\subsubsection*{Cantor space vs the real line}
The reader will note that Eggleston's Theorem, as originally stated, applies to
the unit interval $[0,1]$, while the version proved above is for Cantor space
$\Cant$. It is in fact possible to develop the theory of effective dimension for
real numbers, too. One way to do this is to use net measures $\mathcal{M}^s$ as
introduced in Section ~\ref{sec:hdim}. Dyadic intervals correspond directly to
cylinder sets in $\Cant$ via the finite-to-one mapping
\[
  x\in \Cant \mapsto \sum_n x_n 2^{-(n+1)}.
\]
If we identify a real with its dyadic expansion (which is unique except for
dyadic rationals), we can speak of Kolmogorov complexity of (initial segments
of) reals etc. Furthermore, the connection between effective dimension and
Kolmogorov complexity carries over to the setting in $[0,1]$. For a thorough
development of effective dimension in Euclidean (and other) spaces,
see~\citep{Lutz-Mayordomo:2008a}.
\medskip

\section{Multifractal measures} 
\label{sec:multifractal_measures}

A measure can have a fractal nature by the way it spreads its mass. To prove a
lower bound on the Hausdorff dimension of a Cantor set, we distribute a mass
uniformly along it (and appeal to the mass distribution principle,
Lemma~\ref{lem:mdp}). In other words, we define a measure supported on a
fractal. But a measure can exhibit fractal properties also through \emph{the way
it spreads its mass over its support}, which is not necessarily uniform. Think
of earthquake distributions along fault systems. The fault systems themselves
are usually of a fractal nature. Moreover, there often seems to be a
non-uniformity in the distribution of earthquakes along fault lines. Some ``hot
spots'' produce much more earthquakes than other, more quiet sections of a fault
(see for example \citet{Kagan:1980a,Kagan:1991a,Mandelbrot:1989a}). The
underlying mathematical concept has become known as \emph{multifractality of
measures}. It is widely used in the sciences today. It would go far beyond the
scope of this survey to introduce the various facets of multifractal measures
here. Instead, we focus on one aspect of multifractality that appears to be a
natural extension of the material presented in
Section~\ref{sec:hausdorff_dimension_and_information}. The interested reader may
refer to a forthcoming paper \citep{Reimann:b}.

\subsection{The dimension of a measure} 
\label{sub:the_dimension_of_a_measure}

Unless explicitly noted, in the following \emph{measure} will always mean Borel
probability measure on $[0,1]$.

First, we define the \emph{dimension of a measure}, which reflects the fractal
nature of its support.

\begin{defn}[\citet{Young:1982a}] Given a Borel probability measure $\mu$ on
  $\Real$, let
  \[
    \Hdim \mu = \inf \{ \Hdim E \colon \mu E = 1 \}
  \]
\end{defn}

For example, if we distribute a unit mass uniformly along a Cantor set
$C_{m,r}$, we obtain a measure $\mu_{m,r}$ with
\[
  \Hdim \mu_{m,r} = -\ln(m)/\ln(r).
\]
Instead by a single number, one can try to capture the way a measure spreads its
mass among fractal sets by means of a distribution.

\begin{defn}[\citet{Cutler:1993a}] The \emph{dimension distribution} of a
  measure $\mu$ is given by
  \[
    \mu_{\dim} [0,t] = \sup  \{ \mu D \colon \Hdim D \leq t, \; D \subseteq \Real \text{ Borel} \}.
  \]
\end{defn}

As $\Hdim$ is countably stable (see
Section~\ref{sub:effective_dimension_and_kolmogorov_complexity}), $\mu_{\dim}$
extends to a unique Borel probability measure on $[0,1]$. The dimension of a
measure can be expressed in terms of $\mu_{\dim}$:
\[
  \Hdim \mu = \inf \{ \alpha \colon \mu_{\dim}[0,\alpha] = 1 \}. 
\]
For many measures, the dimension distribution does not carry any extra
information. For example, if $\lambda$ is Lebesgue measure on $[0,1]$, the
dimension distribution is the \emph{Dirac point measure} $\delta_1$, where
$\delta_\alpha$ is defined as  
\[
   \delta_\alpha A = \begin{cases}
    1 &  \alpha \in A, \\
    0 &   \alpha \not\in A.
  \end{cases}
\]
Measures whose dimension distribution is a point measure are called \emph{exact
dimensional}. This property also to uniform distributions on Cantor sets.

\begin{prop}
  Let $\mu_{m,r}$ be the uniform distribution along the Cantor set $C_{m,r}$.
  Then $\mu_{\dim} = \delta_\alpha$, where $\alpha = -\ln(m)/\ln(r)$.  
\end{prop}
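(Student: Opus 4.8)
The plan is to compute the cumulative distribution $t \mapsto \mu_{\dim}[0,t]$ directly and match it to that of $\delta_\alpha$, which is $0$ for $t < \alpha$ and $1$ for $t \ge \alpha$. Since two Borel probability measures on $[0,1]$ that agree on every initial segment $[0,t]$ coincide, this yields $\mu_{\dim} = \delta_\alpha$. The easy half concerns $t \ge \alpha$: taking $D = C_{m,r}$ itself, the uniform distribution $\mu_{m,r}$ is by construction supported on $C_{m,r}$, so $\mu_{m,r}(C_{m,r}) = 1$, while $\Hdim C_{m,r} = \alpha \le t$. Hence $\mu_{\dim}[0,t] \ge 1$, forcing equality.

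The substance is the half $t < \alpha$, where I must show $\mu_{\dim}[0,t] = 0$, equivalently that \emph{every} Borel $D$ with $\mu_{m,r}(D) > 0$ has $\Hdim D \ge \alpha$. The engine is a uniform upper scaling bound: I will show there are $c > 0$ and $\varepsilon > 0$ with $\mu(U) \le c\, d(U)^\alpha$ for every set $U$ of diameter $d(U) < \varepsilon$, writing $\mu = \mu_{m,r}$. At level $n$ the construction leaves $m^n$ intervals of length $r^n$, each carrying mass $m^{-n}$, and the minimal separation between two distinct level-$n$ intervals equals the sibling gap $g_n = r^{n-1}(1-mr)/(m-1) =: c_0\, r^{n-1}$, with $c_0 > 0$ since $r < 1/m$. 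Given $U$ with small diameter, I pick $n \ge 1$ with $g_{n+1} \le d(U) < g_n$. The right inequality forces $U$ to meet at most one level-$n$ interval, so $\mu(U) \le m^{-n}$; the left inequality, combined with the identity $r^\alpha = 1/m$ (immediate from $\alpha = -\ln m / \ln r$), gives $d(U)^\alpha \ge c_0^{\alpha} r^{n\alpha} = c_0^{\alpha} m^{-n}$, and hence $\mu(U) \le c_0^{-\alpha}\, d(U)^\alpha$. Setting $c = c_0^{-\alpha}$ and $\varepsilon = g_1 = c_0$ completes the estimate.

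To finish, fix any Borel $D$ with $\mu(D) > 0$ and apply the bound to the restriction $\nu(\,\cdot\,) = \mu(\,\cdot\, \cap D)$, a mass distribution on $D$ satisfying $\nu(U) \le \mu(U) \le c\, d(U)^\alpha$. The Mass Distribution Principle (Lemma~\ref{lem:mdp}) then yields $\Hmeas^{\alpha}(D) \ge \nu(D)/c = \mu(D)/c > 0$, and by \eqref{equ:hmeas-donw-infty} this forces $\Hdim D \ge \alpha$. Thus no set of dimension strictly below $\alpha$ carries positive $\mu$-mass, so $\mu_{\dim}[0,t] = 0$ for $t < \alpha$, which together with the first half gives $\mu_{\dim} = \delta_\alpha$. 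I expect the geometric separation estimate to be the main obstacle: one must correctly identify the minimal gap $g_n$ (occurring between children of a common parent, not between cousins) and align scales so that $d(U) \asymp r^n$ pairs with $\mu(U) \le m^{-n}$. Once the identity $r^\alpha = 1/m$ is in hand the matching is routine bookkeeping, and the Mass Distribution Principle carries the local bound all the way to the dimension lower bound.
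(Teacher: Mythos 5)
Your proof is correct, but it takes a different route from the one the paper sketches. The paper does not prove the proposition directly; it defers to the machinery of the next subsection, namely the pointwise dimension $\delta_\mu(x)$ together with Cutler's theorem $\mu_{\dim}[0,t] = \mu(D^t_\mu)$, and then observes that for the uniformly spread mass on $C_{m,r}$ the pointwise dimension is $\mu$-almost surely equal to $\alpha$, so that $\mu(D^t_\mu)$ jumps from $0$ to $1$ at $t=\alpha$. You instead stay entirely inside Section~\ref{sec:hdim}: you establish the uniform scaling bound $\mu(U) \leq c\, d(U)^\alpha$ via the sibling-gap estimate $g_n = c_0 r^{n-1}$ and the identity $r^\alpha = 1/m$, restrict $\mu$ to an arbitrary Borel set $D$ of positive mass, and invoke the Mass Distribution Principle (Lemma~\ref{lem:mdp}) plus \eqref{equ:hmeas-donw-infty} to force $\Hdim D \geq \alpha$, which kills $\mu_{\dim}[0,t]$ for $t<\alpha$; the half $t\geq\alpha$ is immediate from $D=C_{m,r}$. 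Both arguments ultimately rest on the same local estimate $\mu B(x,\varepsilon)\asymp \varepsilon^\alpha$, but yours is self-contained and elementary (no appeal to Cutler's characterization, which the paper only states without proof), at the cost of redoing the geometric separation bookkeeping; the paper's route is a one-line corollary once the pointwise-dimension formalism is in place and generalizes immediately to non-uniform (exact dimensional) measures. Two small remarks: your identification of the minimal gap assumes the outermost children of each interval share its endpoints, which is the natural reading of ``equally spaced'' (and consistent with the middle-thirds case), and in any case the argument only needs $g_n \asymp r^{n-1}$; and your restricted measure $\nu(\,\cdot\,)=\mu(\,\cdot\,\cap D)$ need not have closed support inside $D$, but the standard proof of Lemma~\ref{lem:mdp} only uses $\nu(D)>0$ and the scaling bound, so the application is sound.
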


It is not completely obvious that $\mu_{m,r}$ has no mass on any set of
Hausdorff dimension $< -\ln(m)/\ln(r)$. One way to see this is by connecting the
dimension distribution of a measure to its \emph{pointwise dimensions}, which we
will introduce next.

\subsection{Pointwise dimensions} 
\label{sub:pointwise_dimensions}

\begin{defn}
  Let $\mu$ be a probability measure on a separable metric space. The
  \emph{(lower) pointwise dimension} of $\mu$ at a point $x$ is defined as 
  \[
    \delta_\mu(x) = \liminf_{\varepsilon \to 0} \frac{\log \mu B(x,\varepsilon)}{\log \varepsilon}.
  \]
  (Here $B(x,\varepsilon)$ is the $\varepsilon$-ball around $x$.)
\end{defn}

Of course, one can also define the upper pointwise dimension by considering
$\limsup$ in place of $\liminf$, which is connected to packing measures and
packing dimension similar to the way lower pointwise dimension is connected to
Hausdorff measures and dimension. As this survey is focused on Hausdorff
dimension, we will also focus on the lower pointwise dimensions and simply speak
of ``pointwise dimension'' when we mean lower pointwise dimension.

We have already encountered the pointwise dimension of a measure when looking at
the Shannon-Macmillan-Breiman Theorem  (Theorem~\ref{thm:SMB}), which says that
for ergodic measures on $\Cant$ $\mu$-almost surely the pointwise dimension is
equal to the entropy of $\mu$.

We have also seen (Corollary~\ref{cor:fundamental}) that the effective dimension
of a real $x$ is its pointwise dimension with respect to the semimeasure
$\widetilde{Q}(\sigma) = 2^{-K(\sigma)}$.

We can make this analogy even more striking by considering a different kind of
universal semimeasure that gives rise to the same notion of effective dimension.

Recall that a semimeasure is a function $Q:\Nat \to \Real^{\geq 0}$ such that
$\sum Q(m) \leq 1$. Even when seen as a function on $\Str$, such a semimeasure
is of a discrete nature. In particular, it does not take into account the
partial ordering of strings with respect to the prefix relation. The notion of a
\emph{continuous} semimeasure does exactly that. As the prefix relation
corresponds to a partial ordering of basic open sets, and premeasures are
defined precisely on those sets, continuous semimeasures respects the structure
of $\Cant$. Compare the following definition with the properties of a
probability premeasure \eqref{equ:prob_premeasure1},
\eqref{equ:prob_premeasure2}. 

\begin{defn}
  \begin{enumerate}[(i)]
    \item    A \emph{continuous semimeasure} is a function $M: \Str \to
    \Real^{\geq 0}$ such that
  \begin{gather*} 
      M(\Estr) \leq 1, \\
      M(\sigma) \geq M(\sigma\Conc 0) + M(\sigma\Conc 1).
  \end{gather*}   

    \item   A continuous semimeasure $M$ is \emph{enumerable} if there exists a
    computable function $f:\Nat\times \Str \to \Real^{\geq 0}$ such that for all
    $n, \sigma$,
  \[
    f(n, \sigma) \leq f(n+1, \sigma) \quad \text{ and } \quad \lim_{n \to \infty} f(n,\sigma) = M(\sigma).
  \]

    \item  An enumerable continuous semimeasure $M$ is \emph{universal} if for
    every enumerable continuous semimeasure $P$ there exists a constant $c$ such
    that for all $\sigma$,
    \[
       P(\sigma) \leq c\, M(\sigma).
    \] 
  \end{enumerate}
\end{defn} 

Levin~\citep{Zvonkin-Levin:1970a} showed that a universal semimeasure exists.
Let us fix such a semimeasure $\mathbf{M}$. Similar to $K$, we can introduce
a complexity notion based on $\mathbf{M}$ by letting
\[
  \KM(\sigma) = -\log \mathbf{M}(\sigma).
\]
Some authors denote $\KM$ by $\operatorname{KA}$ and refer to it as \emph{a
priori complexity}. $\KM$ is closely related to $\K$. 

\begin{thm}[\citet{Gacs:1983a},\citet{Uspensky-Shen:1996a}] There exist
  constants $c,d$ such that for all $\sigma$,
  \[
    \KM(\sigma) - c \leq \K(\sigma) \leq \KM(\sigma) + \K(\Len{\sigma}) + d.
  \]
\end{thm}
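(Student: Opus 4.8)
The plan is to prove the two inequalities separately, exploiting the duality between the universal enumerable \emph{continuous} semimeasure $\mathbf{M}$ (which defines $\KM$) and the universal enumerable \emph{discrete} semimeasure $\sigma \mapsto 2^{-\K(\sigma)}$ (which defines $\K$ via the Coding Theorem, Theorem~\ref{thm:coding}, and the Invariance Theorem, Theorem~\ref{thm:K-inv}). Recall that the latter pairing means two things: $\sum_\sigma 2^{-\K(\sigma)} \leq 1$, since the shortest programs form a prefix-free set and Kraft applies; and for every enumerable discrete semimeasure $Q$ there is a constant $d$ with $\K(\sigma) \leq -\log Q(\sigma) + d$. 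I would get the left inequality by manufacturing a continuous semimeasure out of $2^{-\K}$, and the right inequality by manufacturing a discrete semimeasure out of $\mathbf{M}$.

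For the left inequality $\KM(\sigma) \leq \K(\sigma) + c$, I would set
\[
  P(\tau) = \sum_{\tau \Sleq \rho} 2^{-\K(\rho)},
\]
the sum ranging over all strings $\rho$ having $\tau$ as an initial segment (including $\rho = \tau$). Splitting the extensions of $\tau$ into $\tau$ itself together with the extensions of $\tau\Conc 0$ and of $\tau\Conc 1$ gives $P(\tau) = 2^{-\K(\tau)} + P(\tau\Conc 0) + P(\tau\Conc 1)$, so $P$ satisfies the continuous semimeasure inequalities, and $P(\Estr) = \sum_\rho 2^{-\K(\rho)} \leq 1$. Since $\K$ is approximable from above, the terms $2^{-\K(\rho)}$ are uniformly enumerable from below, hence so is $P$. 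By universality of $\mathbf{M}$ there is a constant $c$ with $P(\sigma) \leq c\,\mathbf{M}(\sigma)$ for all $\sigma$; retaining only the term $\rho = \sigma$ gives $P(\sigma) \geq 2^{-\K(\sigma)}$, so $\mathbf{M}(\sigma) \geq 2^{-\K(\sigma)}/c$, and taking $-\log$ yields $\KM(\sigma) \leq \K(\sigma) + \log c$.

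For the right inequality $\K(\sigma) \leq \KM(\sigma) + \K(\Len{\sigma}) + d$, I would define the discrete object
\[
  R(\sigma) = \mathbf{M}(\sigma) \cdot 2^{-\K(\Len{\sigma})}.
\]
The crucial computation is that $R$ is a discrete semimeasure: grouping by length and using that each level of a continuous semimeasure carries total mass at most $\mathbf{M}(\Estr) \leq 1$, one obtains
\[
  \sum_\sigma R(\sigma) = \sum_n 2^{-\K(n)} \sum_{\Len{\sigma}=n} \mathbf{M}(\sigma) \leq \sum_n 2^{-\K(n)} \leq 1.
\]
As a product of two functions enumerable from below, $R$ is enumerable, so the domination property of $2^{-\K}$ supplies a constant $d$ with $\K(\sigma) \leq -\log R(\sigma) + d = \KM(\sigma) + \K(\Len{\sigma}) + d$, as required.

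The main obstacle is the right-hand inequality, and specifically the appearance of the correction term $\K(\Len{\sigma})$. One cannot simply treat $\mathbf{M}$ itself as a discrete semimeasure: although each length level contributes mass at most $1$, there are infinitely many levels, so $\sum_\sigma \mathbf{M}(\sigma)$ typically diverges and no domination by $2^{-\K}$ is available. The weighting $2^{-\K(\Len{\sigma})}$ is precisely the price of \emph{identifying the level} in an enumerable way, and it is exactly what renders the total mass finite. Any attempt to remove this factor must fail in general, which is the conceptual reason the two complexities differ by this much.
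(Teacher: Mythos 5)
Your argument is correct: the paper itself only cites this result (to G\'acs and Uspensky--Shen) without proof, and what you give is the standard duality argument from the literature --- summing $2^{-\K}$ over extensions to build an enumerable continuous semimeasure dominated by $\mathbf{M}$ for the left inequality, and weighting $\mathbf{M}$ by $2^{-\K(\Len{\sigma})}$ to obtain an enumerable discrete semimeasure to which the Coding Theorem applies for the right inequality. Both verifications (the telescoping identity $P(\tau)=2^{-\K(\tau)}+P(\tau\Conc 0)+P(\tau\Conc 1)$ and the bound $\sum_{\Len{\sigma}=n}\mathbf{M}(\sigma)\leq 1$) are sound, and your closing remark correctly identifies why the $\K(\Len{\sigma})$ term is unavoidable.
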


When writing $K(n)$, we identify n with is binary representation (which is a
string of length approximately $\log(n)$). Since $\K(n)/n \to 0$ for $n \to
\infty$, we obtain the following alternative characterization of effective
dimension.

\begin{cor}
  For any $x \in \Cant$,
  \[
    \Hdim x = \liminf_{n \to \infty} \frac{\KM(x\Rest{n})}{n} = \liminf_{n \to \infty} \frac{-\log \mathbf{M}(x\Rest{n})}{n}.
  \]
\end{cor}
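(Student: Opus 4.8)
The plan is to prove the corollary by combining the Gács–Uspensky–Shen inequality with the fundamental theorem of effective dimension (Corollary~\ref{cor:fundamental}), which states $\Hdim x = \LoK(x) = \liminf_{n} \K(x\Rest{n})/n$. Since $\KM(\sigma) = -\log \mathbf{M}(\sigma)$ by definition, the second equality in the corollary is immediate, so the real content is showing that replacing $\K$ by $\KM$ inside the $\liminf$ does not change the value. I would establish this by sandwiching $\K(x\Rest{n})$ between $\KM(x\Rest{n})$ plus lower-order terms, dividing by $n$, and taking $\liminf$.

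First I would write down the two-sided bound from the Gács–Uspensky–Shen theorem applied to $\sigma = x\Rest{n}$, so that $\Len{\sigma} = n$:
\[
  \KM(x\Rest{n}) - c \;\leq\; \K(x\Rest{n}) \;\leq\; \KM(x\Rest{n}) + \K(n) + d.
\]
Dividing through by $n$ gives
\[
  \frac{\KM(x\Rest{n})}{n} - \frac{c}{n} \;\leq\; \frac{\K(x\Rest{n})}{n} \;\leq\; \frac{\KM(x\Rest{n})}{n} + \frac{\K(n)}{n} + \frac{d}{n}.
\]
The key observation, already flagged in the paragraph preceding the statement, is that $\K(n)/n \to 0$ as $n \to \infty$, since $\K(n)$ grows like $\log n$ (more precisely, $\K(n) \leq 2\log n + O(1)$). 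The constant terms $c/n$ and $d/n$ also vanish. Hence the additive discrepancy between $\K(x\Rest{n})/n$ and $\KM(x\Rest{n})/n$ tends to $0$.

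The final step is to push this uniform asymptotic agreement through the $\liminf$. Since the two sequences $\K(x\Rest{n})/n$ and $\KM(x\Rest{n})/n$ differ by a quantity tending to $0$, their $\liminf$'s coincide; this is a routine property of $\liminf$, which is insensitive to a perturbation that converges to zero. Combining with Corollary~\ref{cor:fundamental} yields
\[
  \Hdim x = \liminf_{n \to \infty} \frac{\K(x\Rest{n})}{n} = \liminf_{n \to \infty} \frac{\KM(x\Rest{n})}{n},
\]
and the rightmost expression equals $\liminf_n -\log \mathbf{M}(x\Rest{n})/n$ by definition of $\KM$. I anticipate no genuine obstacle here: the argument is entirely mechanical once the theorem is in hand, and the only point requiring a word of care is the standard bound $\K(n) = O(\log n)$ that guarantees $\K(n)/n \to 0$, which I would simply cite rather than reprove.
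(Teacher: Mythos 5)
Your argument is correct and is exactly the one the paper intends: it derives the corollary by sandwiching $\K(x\Rest{n})$ between $\KM(x\Rest{n})$ plus $O(1)$ and $\KM(x\Rest{n}) + \K(n) + O(1)$ via the G\'acs--Uspensky--Shen theorem, and then uses $\K(n)/n \to 0$ together with Corollary~\ref{cor:fundamental}. The paper compresses this to the single remark preceding the statement, so your write-up is just a fuller version of the same proof.
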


In other words, the effective dimension of a point $x$ in Cantor space (and
hence also in $[0,1]$) is its pointwise dimension with respect to a universal
enumerable continuous semimeasure $\mathbf{M}$.

$\KM$ ``smoothes'' out a lot of the complexity oscillations  $\K$ has. This has
a nice effect concerning the characterization of random sequences. On the one
hand, $\mathbf{M}$ is universal among enumerable semimeasures, hence in
particular among computable probability measures. Therefore, for any $x \in
\Cant$, and any computable probability measure $\mu$,
\[
  -\log \mathbf{M}(x\Rest{n}) - c  \leq -\log \mu\Cyl{x\Rest{n}} \quad \text{ for all $n$},
\]
where $c$ is a constant depending only on $\mu$.

On the other hand, \citet{Levin:1973b} showed that $x \in \Cant$ is random with
respect to computable probability measure $\mu$ if and only if for some constant
$d$,
\[
  \KM(x\Rest{n}) \geq -\log \mu \Cyl{x\Rest{n}} - d \quad \text{ for all $n$}.
\]
Thus, along $\mu$-random sequences, $-\log \mathbf{M}$ and $-\log \mu$ differ
by at most a constant. This gives us the following.

\begin{prop} \label{pro:pointwise-random}
  Let $\mu$ be a computable probability measure on $\Cant$. If $x \in \Cant$ is
  $\mu$-random, then
  \[
      \Hdim x = \delta_\mu(x).
  \]  
\end{prop}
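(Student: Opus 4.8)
The plan is to reduce the whole statement to a single fact that is already essentially recorded above: along a $\mu$-random sequence the two quantities $-\log \mathbf{M}(x\Rest{n})$ and $-\log \mu\Cyl{x\Rest{n}}$ stay within a bounded additive distance of each other, so that after dividing by $n$ and passing to the $\liminf$ their difference washes out. The two sides of the proposition are then literally the same $\liminf$.

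First I would rewrite $\delta_\mu(x)$ in terms of cylinders. In $\Cant$ the $\varepsilon$-ball $B(x,\varepsilon)$ is exactly the cylinder $\Cyl{x\Rest{n}}$ with $n = \Ceil{-\log \varepsilon}$, and for such $\varepsilon$ one has $\log \varepsilon \in [-n,-(n-1))$. Since $\log \mu\Cyl{x\Rest{n}} \leq 0$, this sandwiches $\frac{\log \mu B(x,\varepsilon)}{\log \varepsilon}$ between $\frac{-\log\mu\Cyl{x\Rest{n}}}{n}$ and $\frac{-\log\mu\Cyl{x\Rest{n}}}{n-1}$. Because the ratio $n/(n-1) \to 1$, letting $\varepsilon \to 0$ (equivalently $n \to \infty$) yields
\[
    \delta_\mu(x) = \liminf_{n \to \infty} \frac{-\log \mu\Cyl{x\Rest{n}}}{n}.
\]
On the other hand, the corollary just proved gives $\Hdim x = \liminf_{n \to \infty} \frac{-\log \mathbf{M}(x\Rest{n})}{n}$, so it suffices to show these two $\liminf$'s agree.

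For this I combine the two inequalities recorded immediately before the statement. Universality of $\mathbf{M}$ among enumerable semimeasures (hence among computable probability measures) gives a constant $c$ with $-\log \mathbf{M}(x\Rest{n}) \leq -\log \mu\Cyl{x\Rest{n}} + c$ for all $n$; and Levin's characterization of $\mu$-randomness, applicable precisely because $x$ is assumed $\mu$-random, supplies a constant $d$ with $\KM(x\Rest{n}) = -\log \mathbf{M}(x\Rest{n}) \geq -\log \mu\Cyl{x\Rest{n}} - d$ for all $n$. Together these bound the quantity $|-\log\mathbf{M}(x\Rest{n}) + \log\mu\Cyl{x\Rest{n}}|$ by $\max(c,d)$ uniformly in $n$. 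Dividing by $n$ sends this bounded difference to $0$, so the sequences $\frac{-\log\mathbf{M}(x\Rest{n})}{n}$ and $\frac{-\log\mu\Cyl{x\Rest{n}}}{n}$ share the same $\liminf$, giving $\Hdim x = \delta_\mu(x)$.

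There is no serious obstacle here: the argument is an assembly of facts established earlier, and the only point requiring a little care is the first step, verifying that the $\varepsilon \to 0$ limit inferior defining $\delta_\mu$ coincides with the integer $\liminf$ over initial-segment lengths. Everything else is immediate once the constant-gap estimate is in hand; note that the hypothesis of $\mu$-randomness is used exactly once, namely to obtain the lower bound via Levin's theorem, which is precisely the inequality that can fail for non-random $x$.
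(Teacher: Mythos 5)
Your proposal is correct and follows essentially the same route as the paper: the paper derives the proposition directly from the two displayed inequalities preceding it (domination of $\mu$ by $\mathbf{M}$ via universality, and Levin's lower bound for $\mu$-random sequences), which together pin $-\log \mathbf{M}(x\Rest{n})$ and $-\log \mu\Cyl{x\Rest{n}}$ within an additive constant, exactly as you argue. Your explicit verification that $\delta_\mu(x)$ equals the integer $\liminf$ over cylinder lengths is a detail the paper leaves implicit, but it is the same argument.
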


Given a measure $\mu$ and $\alpha \geq 0$, let
\[
  D^\alpha_\mu = \{ x \colon \delta_\mu(x) \leq \alpha \}.
\]
If $\mu$ is computable, then since $\mathbf{M}$ is a universal semimeasure,
\[
  D^\alpha_\mu \subseteq \{x \colon \Hdim x \leq \alpha \}.
\]
Let us denote the set on the right hand side by $\dim_{\leq \alpha}$.

\begin{thm}[\citet{Cai-Hartmanis:1994a,Ryabko:1984a}] \label{thm:cai-hart}
  For any $\alpha \geq 0$,
  \[
    \Hdim (\dim_{\leq \alpha}) = \alpha.
  \]
\end{thm}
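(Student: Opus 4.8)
The plan is to prove the two inequalities separately, with the upper bound being essentially immediate and the lower bound carrying all of the content. For $\Hdim(\dim_{\leq \alpha}) \leq \alpha$: every $x \in \dim_{\leq\alpha}$ satisfies $\LoK(x) = \Hdim x \leq \alpha$ by Corollary~\ref{cor:fundamental}, so $\LoK(\dim_{\leq\alpha}) = \sup\{\LoK(x) : x \in \dim_{\leq\alpha}\} \leq \alpha$, and Ryabko's bound (Theorem~\ref{thm:ryabko-bound}) gives $\Hdim(\dim_{\leq\alpha}) \leq \LoK(\dim_{\leq\alpha}) \leq \alpha$. This works for every $\alpha$ and needs no computability assumption; it suffices to treat $\alpha \in [0,1]$, since effective dimension never exceeds $1$ in $\Cant$.

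For the lower bound I would first treat rational (hence computable) $q \in [0,1]$ and then bootstrap. Fix a computable set $S \subseteq \Nat$ of natural density exactly $q$, that is, $|S \cap [0,n)|/n \to q$, and let $E_S = \{x \in \Cant : x(i) = 0 \text{ for all } i \notin S\}$ be the sequences forced to vanish off $S$ and free on $S$. Two claims drive the argument. First, $E_S \subseteq \dim_{\leq q}$: since $S$ is computable, $x\Rest{n}$ is recovered from its bits on $S \cap [0,n)$ together with $n$, so $K(x\Rest{n}) \leq |S \cap [0,n)| + O(\log n)$, whence $\Hdim x = \liminf_n K(x\Rest{n})/n \leq q$. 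Second, $\Hdim E_S \geq q$: I would place on $E_S$ the measure $\mu$ that flips a fair coin on each free coordinate and is deterministic off $S$, so that $\mu\Cyl{\sigma} = 2^{-|S \cap [0,n)|}$ for every length-$n$ string $\sigma$ compatible with $E_S$. Given $s < q$, the convergence $|S \cap [0,n)|/n \to q$ forces $\mu\Cyl{\sigma} \leq 2^{-sn} = d(\Cyl{\sigma})^s$ for all sufficiently short cylinders; since in $\Cant$ any set of small diameter lies inside a single such cylinder, the Mass Distribution Principle (Lemma~\ref{lem:mdp}) yields $\Hmeas^s(E_S) > 0$ and hence $\Hdim E_S \geq s$. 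Letting $s \to q$ gives $\Hdim(\dim_{\leq q}) \geq \Hdim E_S \geq q$.

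To pass to arbitrary $\alpha$, I note $\dim_{\leq q} \subseteq \dim_{\leq \alpha}$ for every rational $q < \alpha$, so monotonicity of Hausdorff dimension gives $\Hdim(\dim_{\leq\alpha}) \geq \sup\{q \in \Rat : q < \alpha\} = \alpha$; together with the upper bound this proves $\Hdim(\dim_{\leq\alpha}) = \alpha$. The main obstacle is the lower bound, and specifically the tension it embodies: the set $E_S$ must simultaneously be thin enough that each of its points is algorithmically compressible to rate $q$ (the complexity count over $S$), yet fat enough to carry a measure dispersing like $2^{-sn}$ for every $s < q$ (the mass-distribution estimate). The rational-approximation step is what lets me avoid having to describe a non-computable density to arbitrary precision, which would otherwise inflate the bound on $K(x\Rest{n})$ and destroy the containment $E_S \subseteq \dim_{\leq\alpha}$.
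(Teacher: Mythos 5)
Your proof is correct. Note that the paper itself offers no proof of Theorem~\ref{thm:cai-hart} --- it only cites \citet{Cai-Hartmanis:1994a} and \citet{Ryabko:1984a} --- so there is no in-text argument to compare against; but your route is the standard one and uses exactly the toolkit the paper assembles. The upper bound is Ryabko's inequality (Theorem~\ref{thm:ryabko-bound}) combined with Corollary~\ref{cor:fundamental}, and the lower bound is a ``dilution'' construction: your $E_S$ is the paper's Example~(4) set $D = \{x \in \Cant : x(2n) = 0\}$ generalized from density $1/2$ to density $q$, with the Mass Distribution Principle (Lemma~\ref{lem:mdp}) giving $\Hdim E_S \geq q$ and the two-part code over $S \cap [0,n)$ giving $E_S \subseteq \dim_{\leq q}$; the rational-approximation step and the monotonicity bootstrap are both sound. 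Two small remarks. First, as you implicitly observe, the statement can only hold for $\alpha \in [0,1]$: for $\alpha > 1$ one has $\dim_{\leq\alpha} = \Cant$ and $\Hdim \Cant = 1$, so the theorem's ``any $\alpha \geq 0$'' must be read with that restriction. Second, an equally short lower bound using machinery the paper develops later is to choose $p$ with $H(p) = \alpha$ and combine the Cantor-space Eggleston theorem $\Hdim \overline{D}_p = H(p)$ with the two-part-code estimate $\LoK(x) \leq H(p)$ for all $x \in \overline{D}_p$; your version has the advantage of being self-contained (no appeal to the law of large numbers or to the effective SMB theorem) and of making explicit the tension between compressibility of each point and mass dispersal of the whole set.
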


It follows that $\Hdim D^\alpha_\mu \leq \alpha$. This was first shown, for
general $\mu$, by \citet{Cutler:1990b,Cutler:1992a}. Cutler also characterized
the dimension distribution of a measure through its pointwise dimensions.

\begin{thm}[\citet{Cutler:1990b,Cutler:1992a}] For each $t \geq 0$,
  \[
      \mu_{\dim}[0,t] = \mu(D^t_\mu).
  \]  
\end{thm}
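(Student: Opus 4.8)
The plan is to prove the two inequalities $\mu_{\dim}[0,t] \ge \mu(D^t_\mu)$ and $\mu_{\dim}[0,t] \le \mu(D^t_\mu)$ separately. For the first, I observe that $\delta_\mu$ is a Borel function (for fixed $\varepsilon$ the map $x \mapsto \mu B(x,\varepsilon)$ is lower semicontinuous, and the $\liminf$ along rational $\varepsilon \to 0$ is again Borel), so $D^t_\mu = \{x : \delta_\mu(x) \le t\}$ is a Borel set. Since we already know that $\Hdim D^t_\mu \le t$, the set $D^t_\mu$ is itself an admissible competitor in the supremum defining $\mu_{\dim}[0,t]$, and therefore $\mu_{\dim}[0,t] \ge \mu(D^t_\mu)$.

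For the reverse inequality it suffices to show that every Borel set $D$ with $\Hdim D \le t$ satisfies $\mu(D) \le \mu(D^t_\mu)$, since taking the supremum over such $D$ then gives $\mu_{\dim}[0,t] \le \mu(D^t_\mu)$. Equivalently, I want $\mu\bigl(D \cap \{x : \delta_\mu(x) > t\}\bigr) = 0$, i.e.\ that $\mu$-almost every point of a set of Hausdorff dimension at most $t$ has pointwise dimension at most $t$. Writing $\{x : \delta_\mu(x) > t\} = \bigcup_{k \ge 1} \{x : \delta_\mu(x) \ge t + 1/k\}$, it is enough to prove $\mu(F_k) = 0$ for each $F_k := D \cap \{x : \delta_\mu(x) \ge t + 1/k\}$, because a countable union of $\mu$-null sets is $\mu$-null.

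Fix $k$ and suppose toward a contradiction that $\mu(F_k) > 0$. Fix $\eta \in (0,1/k)$. Every $x \in F_k$ has $\delta_\mu(x) \ge t + 1/k$, so there is a threshold $r_0(x) > 0$ with $\mu B(x,r) \le r^{\,t+1/k-\eta}$ for all $r < r_0(x)$. Writing $F_k = \bigcup_{m} \{ x \in F_k : r_0(x) > 1/m \}$ and using continuity of $\mu$ from below, some layer $F_{k,m}$ has $\mu(F_{k,m}) > 0$ and satisfies the \emph{uniform} estimate $\mu B(x,r) \le r^{\,t+1/k-\eta}$ for all $x \in F_{k,m}$ and all $r < 1/m$. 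Now if $U$ is any set meeting $F_{k,m}$ with $2\,d(U) < 1/m$, then choosing $x \in U \cap F_{k,m}$ gives $U \subseteq B(x, 2\,d(U))$, whence $\mu U \le (2\,d(U))^{\,t+1/k-\eta} = 2^{\,t+1/k-\eta}\, d(U)^{\,t+1/k-\eta}$. Since covers of $F_{k,m}$ may be assumed to consist only of such sets, the Mass Distribution Principle (Lemma~\ref{lem:mdp}) with $s = t+1/k-\eta$ applies and yields $\Hmeas^{t+1/k-\eta}(F_{k,m}) > 0$, so $\Hdim F_{k,m} \ge t + 1/k - \eta > t$. This contradicts $\Hdim F_{k,m} \le \Hdim D \le t$. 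Hence $\mu(F_k) = 0$ for every $k$, which finishes the reverse inequality and thus the proof.

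I expect the main obstacle to be the third paragraph: converting the pointwise $\liminf$ condition $\delta_\mu(x) \ge t + 1/k$, which only guarantees decay $\mu B(x,r) \le r^{\,t+1/k-\eta}$ below an $x$-dependent threshold, into a \emph{uniform} decay estimate to which the Mass Distribution Principle can be applied. The layering into the sets $F_{k,m}$ and the passage from an arbitrary covering set $U$ to a ball $B(x, 2\,d(U))$ are the crux; everything else is bookkeeping. This step is essentially the classical Billingsley lemma, and the ball/diameter comparison is immediate on $\Real$, whereas in a general metric space one would need a covering lemma to control overlaps.
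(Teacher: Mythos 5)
The paper does not actually prove this theorem --- it is quoted from Cutler's work with a citation --- so there is no internal proof to compare against; judged on its own terms, your argument is correct and is essentially the classical one (the ``Billingsley lemma'' route, which is in substance how Cutler argues). The easy inequality $\mu_{\dim}[0,t]\ge\mu(D^t_\mu)$ is fine once $D^t_\mu$ is known to be Borel and to satisfy $\Hdim D^t_\mu\le t$; be aware that this last fact is the \emph{other} half of Billingsley's lemma (the direction ``$\delta_\mu\le t$ on a set forces Hausdorff dimension $\le t$,'' which needs a Vitali-type covering argument rather than the mass distribution principle), so you are entitled to it only because the paper asserts it in the sentence immediately preceding the theorem --- your third paragraph does not supply it. The hard inequality is handled correctly: the layering into the sets $F_{k,m}$ to upgrade the pointwise $\liminf$ to a uniform decay estimate, the replacement of an arbitrary covering set $U$ meeting $F_{k,m}$ by the ball $B(x,2\,d(U))$, and the observation that Lemma~\ref{lem:mdp} only ever needs the bound $\mu U\le c\,d(U)^s$ for sets $U$ that actually meet the target set, are exactly the right moves, and $\Hdim F_{k,m}\ge t+1/k-\eta>t$ gives the contradiction. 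Two small points worth a sentence each in a polished write-up: the sets $F_{k,m}$ are Borel (by monotonicity of $r\mapsto\mu B(x,r)$ one may restrict to rational $r<1/m$, and each set $\{x:\mu B(x,r)\le r^{s}\}$ is closed since $x\mapsto\mu B(x,r)$ is lower semicontinuous), which justifies continuity from below; and the case $d(U)=0$ should be disposed of separately (a singleton meeting $F_{k,m}$ has $\mu U\le\inf_{r<1/m}r^{s}=0$). Neither affects the substance.
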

By the above observation, for computable measures, we can replace $D^\alpha_\mu$
by $\dim_{\leq t}$. The theorem explains why $\Hdim \mu_{m,r} =
\delta_{-\ln(m)/\ln(r)}$: Since the mass is distributed uniformly over the set
$C_{m,r}$, $\delta_{\mu_{m,r}}(x)$ is almost surely constant.

\subsection{Multifractal spectra} 
\label{sub:multifractal_spectra}

We have seen in the previous section that uniform distributions on Cantor sets
result in dimension distributions of the form of Dirac point measures
$\delta_\alpha$. We can define a more ``fractal'' measure by biasing the
distribution process. Given a probability vector $\vec{p} = (p_1, \ldots, p_m)$,
the measure $\mu_{m,r,\vec{p}}$ is obtained by splitting the mass in the
interval construction of $C_{m,r}$ not uniformly, but according to $\vec{p}$.
The resulting measure is still exact dimensional (see~\cite{Cutler:1993a} --
this is similar to the SMB Theorem). However, the measures $\mu_{m,r,\vec{p}}$
exhibit a rich fractal structure when one considers the \emph{dimension
spectrum} of the pointwise dimensions (instead of its dimension distribution).

\begin{defn}
  The \emph{(fine) multifractal spectrum} of a measure $\mu$ is defined as
  \[
    f_\mu(\alpha) = \Hdim \{ x \colon \delta_\mu(x) = \alpha\}.
  \]
\end{defn}

For the measures $\mu_{m,r,\vec{p}}$, it is possible to compute the multifractal
spectrum using the \emph{Legendre transform} (see~\citep{Falconer:2003a}). It is
a function continuous in $\alpha$ with  $f_\mu(\alpha) \leq \alpha$ and maximum
$\Hdim \mu$. In a certain sense, the universal continuous semimeasure
$\mathbf{M}$ is a ``perfect multifractal'', as, by the Cai-Hartmanis result,
$f_{\mathbf{M}}(\alpha) = \alpha$ for all $\alpha \in [0,1]$.

Moreover, the multifractal spectrum of a computable $\mu = \mu_{m,r,\vec{p}}$
can be expressed as a ``deficiency'' of multifractality against $\mathbf{M}$.

\begin{thm}[\citet{Reimann:b}] For a computable measure $\mu =
  \mu_{m,r,\vec{p}}$, it holds that
  \[
    f_\mu(\alpha) = \Hdim \left \{ x \colon \frac{\Hdim x}{\dim_\mu x} = \alpha \right \}.
  \]
\end{thm}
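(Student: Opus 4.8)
The plan is to reduce the theorem to a pointwise ``chain rule'' and then to match the two level sets on a common regular core. Write $b_n = -\log\mu\Cyl{x\Rest{n}}$ and $c_n = n\log(1/r)$, the log of the inverse diameter of the level-$n$ cylinder of $C_{m,r}$; after the net-measure rescaling of Section~\ref{sec:hdim} one has $\Hdim x = \liminf_n \KM(x\Rest{n})/c_n$ and $\delta_\mu(x) = \liminf_n b_n/c_n$, while $\dim_\mu x = \liminf_n \KM(x\Rest{n})/b_n$ is the effective dimension of $x$ measured against $\mu$ (the pointwise dimension of $\mathbf{M}$ relative to $\mu$). These are the three pairwise pointwise dimensions among $\mathbf{M}$, $\mu$, and the diameter gauge. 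First I would show that whenever $b_n/c_n$ \emph{converges} -- call such $x$ \emph{$\mu$-regular} -- one has
\[
  \Hdim x = \delta_\mu(x)\cdot\dim_\mu x,
\]
by factoring $\KM(x\Rest{n})/c_n = \big(\KM(x\Rest{n})/b_n\big)\cdot\big(b_n/c_n\big)$ and pulling the convergent factor $b_n/c_n\to\delta_\mu(x)$ through the $\liminf$. Thus on the $\mu$-regular set the conditions $\delta_\mu(x)=\alpha$ and $\Hdim x/\dim_\mu x=\alpha$ coincide.

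For the lower bound on $\Hdim A_\alpha$, where $A_\alpha = \{x : \Hdim x/\dim_\mu x = \alpha\}$, I would locate the value $f_\mu(\alpha)$ on frequency-regular points. Maximising $H(\vec{q})/\log(1/r)$ over probability vectors $\vec{q}$ subject to $\sum_j q_j(-\log p_j) = \alpha\log(1/r)$ yields an optimal $\vec{q}^{*}$, and the associated Eggleston set $E$ (read through the $m$-ary coding of $C_{m,r}$) has $\Hdim E = H(\vec{q}^{*})/\log(1/r) = f_\mu(\alpha)$ by Theorem~\ref{thm:eggleston} and its effective refinement Corollary~\ref{cor:fundamental}. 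Every point of $E$ is $\mu$-regular with $\delta_\mu(x)=\alpha$, hence lies in $A_\alpha$ by the chain rule, so $\Hdim A_\alpha\ge\Hdim E = f_\mu(\alpha)$.

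The reverse inequality is the crux. Ryabko's bound (Theorem~\ref{thm:ryabko-bound}) gives only $\Hdim A_\alpha\le\sup_{x\in A_\alpha}\Hdim x$, and since universality of $\mathbf{M}$ forces $\dim_\mu x\le 1$, this yields merely $\Hdim A_\alpha\le\alpha$, weaker than $f_\mu(\alpha)$. To sharpen it I would extract, for each $x\in A_\alpha$, the subsequence $(n_k)$ realising $\dim_\mu x$; along it $b_{n_k}\ge\alpha\,c_{n_k}$, i.e.\ $\mu\Cyl{x\Rest{n_k}}\le r^{\alpha n_k}$, which is exactly the inequality driving the classical Legendre upper bound for $\{\delta_\mu\le\alpha\}$. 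Covering $A_\alpha$ by these cylinders and running the partition-function estimate
\[
  \sum_{\sigma} \mu\Cyl{\sigma}^{\,q}\, r^{|\sigma|\,(s-\alpha q)}
\]
at the optimal exponent $q$ should bound $\Hdim A_\alpha$ by the Legendre-transform value, which for the self-similar $\mu_{m,r,\vec{p}}$ equals $f_\mu(\alpha)$. The genuine obstacle is that $A_\alpha$ is cut out by a \emph{ratio of two $\liminf$'s} rather than by $\delta_\mu$ itself, so the covering only constrains $x$ along its $\dim_\mu$-subsequence and says nothing on $\mu$-\emph{irregular} points, where the chain rule fails; making the Legendre estimate robust to these oscillations is the main work, and I would most likely discharge it by relativising and appealing to the point-to-set principle (Theorem~\ref{thm:set-point-dim}), which converts the bound into a statement about $\Hdim A_\alpha$ without tracking the oscillations by hand.
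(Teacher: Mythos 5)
The paper itself contains no proof of this theorem --- it is quoted from the forthcoming \citep{Reimann:b} --- so your proposal can only be judged on its own merits. Your architecture is the right one: (i) the pointwise chain rule $\Hdim x=\delta_\mu(x)\cdot\dim_\mu x$ on $\mu$-regular points, (ii) a Besicovitch--Eggleston set of $\mu$-regular points realizing $f_\mu(\alpha)$ for the lower bound, (iii) a Legendre-type covering for the upper bound. Parts (i) and (ii) are essentially complete (the points with $\dim_\mu x=0$ have $\Hdim x=0$, since $-\log\mu\Cyl{x\Rest{n}}=O(n)$ on the support, and are discarded by Theorem~\ref{thm:ryabko-bound}).

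The gap is in (iii), and it is twofold. First, extracting only the subsequence realizing $\dim_\mu x$ yields cylinders with $\mu\Cyl{x\Rest{n_k}}\leq r^{(\alpha-\varepsilon)n_k}$, and the partition-function estimate then converges only for exponents $q\leq 0$; writing $\tau(q)$ for the exponent with $\sum_j p_j^q\,r^{\tau(q)}=1$, this gives $\Hdim A_\alpha\leq\inf_{q\leq 0}\bigl(\alpha q+\tau(q)\bigr)$, which equals $f_\mu(\alpha)$ only on the decreasing branch of the spectrum and degenerates to $\Hdim C_{m,r}$ on the increasing branch. You need the dual extraction as well: along the subsequence realizing $\Hdim x$, the identity $\Hdim x=\alpha\dim_\mu x$ forces $\mu\Cyl{x\Rest{n_k}}\geq r^{(\alpha+\varepsilon)n_k}$, which feeds the $q\geq 0$ half; the two halves together give $\inf_{q\in\Real}(\alpha q+\tau(q))=f_\mu(\alpha)$. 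Equivalently, $x\in A_\alpha$ with $\dim_\mu x>0$ forces $\underline{\delta}_\mu(x)\leq\alpha\leq\overline{\delta}_\mu(x)$, and one bounds that two-sided level set. Second, the fallback you propose for irregular points --- relativize and invoke Theorem~\ref{thm:set-point-dim} --- discharges nothing: that principle reduces an upper bound on $\Hdim A_\alpha$ to exhibiting an oracle $z$ with $\sup_{x\in A_\alpha}\dim_H^z x\leq f_\mu(\alpha)$, and producing such a $z$ is exactly as hard as (and in practice is extracted from) the classical covering argument, since membership in $A_\alpha$ gives only the unrelativized bound $\Hdim x\leq\alpha$. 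The consolation is that the obstacle you worry about is illusory: the covering argument needs only infinitely many good scales per point, not convergence of $b_n/c_n$, so the two-sided subsequence extraction already handles every irregular point.
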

Here $\dim_\mu$ denotes the \emph{effective Billingsley dimension} of $x$ with
respect to $\mu$ (see~\citep{Reimann:2004a}).

\bibliography{rand-inf}
\bibliographystyle{abbrvnat}

\end{document}